\newtheorem{theorem}{Theorem}[section]
\newtheorem{lemma}[theorem]{Lemma}
\newtheorem{cor}[theorem]{Corollary}
\newtheorem{drconj}[theorem]{Dyson's Rank Conjecture}
\newtheorem{prop}[theorem]{Proposition}
\theoremstyle{definition}
\newtheorem{definition}[theorem]{Definition}
\theoremstyle{remark}
\newtheorem*{remark}{Remark}
\numberwithin{equation}{section}
\newcommand{\abs}[1]{\lvert#1\rvert}
\newcommand{\SL}{\mbox{SL}}
\newcommand{\GL}{\mbox{GL}}
\newcommand{\SLZ}{\mbox{SL}_2(\mathbb{Z})}
\newcommand{\sgn}{\textnormal{sgn}}
\newcommand{\Parans}[1]{\left(#1\right)}
\newcommand\leg[2]{\genfrac{(}{)}{}{}{#1}{#2}} 
\newcommand\jacsa[2]{{\genfrac{(}{)}{}{}{#1}{#2}}^{*}} 
\newcommand\jacsb[2]{{\genfrac{(}{)}{}{}{#1}{#2}}_{*}} 
\newcommand\Mac[3]{M\left(\frac{#1}{#2};#3\right)}
\newcommand\Nac[3]{N\left(\frac{#1}{#2};#3\right)}
\newcommand\Jac[3]{J\left(\frac{#1}{#2};#3\right)}
\newcommand\Jabc[4]{J\left({#1},{#2},{#3};#4\right)}
\newcommand\Lpar[1]{\left(#1\right)}
\newcommand\Mell[3]{\mathcal{M}\left(\frac{#1}{#2};#3\right)}
\newcommand\Nell[3]{\mathcal{N}\left(\frac{#1}{#2};#3\right)}
\newcommand\Fell[4]{\mathcal{F}_{#1}\left(\frac{#2}{#3};#4\right)}
\newcommand\SFell[4]{\mathcal{F}_{#1}^{*}\left(\frac{#2}{#3};#4\right)}
\newcommand\SFTk{\SFell{1}{1}{p}{\frac{z+k}{p}}}
\newcommand\SFTZ{\SFell{1}{1}{p}{\frac{z}{p}}}
\newcommand\Gell[4]{\mathcal{G}_{#1}\left(\frac{#2}{#3};#4\right)}
\newcommand\Jpz[1]{\mathcal{J}\left(\frac{1}{p};{#1}\right)}
\newcommand\JSpz[1]{\mathcal{J}^{*}\left(\frac{1}{p};{#1}\right)}
\newcommand\Rpz[1]{\mathcal{R}_p\left(#1\right)}
\newcommand\fsqiz{\frac{1}{\sqrt{-i z}}}
\newcommand\sqiz{\sqrt{-i z}}
\newcommand\Sz{-\frac{1}{z}}
\newcommand\THA[3]{\Theta_1\Lpar{\frac{#1}{#2};#3}}
\newcommand\twidit[1]{\overset {\text{\lower 3pt\hbox{$\sim$}}}#1}
\newcommand\dtwidit[1]{\overset {\text{\lower 6pt\hbox{$\sim$}}}#1}
\newcommand\Wtwid{\overset {\text{\lower 3pt\hbox{$\sim$}}}W}
\newcommand\gtwid{\overset {\text{\lower 3pt\hbox{$\sim$}}}g}
\newcommand\ttwid{\overset {\text{\lower 3pt\hbox{$\sim$}}}\theta}
\newcommand\mutwid{\overset {\text{\lower 3pt\hbox{$\sim$}}}\mu}
\newcommand\Stwid{\twidit{S}}
\newcommand\Wdtwid{\Wtwid^{*}}
\newcommand\zcon{\overline{z}}
\newcommand\PPMat{\begin{pmatrix} p^2 & 0 \\ 0 & 1 \end{pmatrix}}
\newcommand\AMat{\begin{pmatrix} a & b \\ c & d \end{pmatrix}}
\newcommand\ApMat{\begin{pmatrix} a & pb \\ c/p & d \end{pmatrix}}
\newcommand\AppMat{\begin{pmatrix} a & p^2b \\ c/p^2 & d \end{pmatrix}}
\newcommand\NAMat{\begin{pmatrix} a & bN \\ c/N & d \end{pmatrix}}
\newcommand\BMat{\begin{pmatrix} a' & b' \\ c' & d' \end{pmatrix}}
\newcommand\SMat{\begin{pmatrix} 0 & -1 \\ 1 & 0 \end{pmatrix}}
\newcommand\TMat{\begin{pmatrix} 1 & 1 \\ 0 & 1 \end{pmatrix}}
\newcommand\BpMat{\begin{pmatrix}1 & b' \\ 0 & p^2\end{pmatrix}}
\newcommand\BbpMat{\begin{pmatrix}p & b'' \\ 0 & p\end{pmatrix}}
\newcommand\FL[1]{\left\lfloor#1\right\rfloor}
\newcommand\CL[1]{\left\lceil#1\right\rceil}
\newcommand\ks{k^{*}}
\newcommand\zbpp{\frac{z+b'}{p^2}}
\newcommand\strokeb[2]{\,\left\arrowvert\,\left[#1\right]_#2\right.}
\newcommand\stroke[3]{#1\,\left\arrowvert\,\left[#2\right]_{#3}\right.}
\newcommand\strokevb[3]{\left.#1\,\right\arrowvert\,\left[#2\right]_{#3}}
\newcommand\Kp{\mathcal{K}_{p,0}}
\newcommand\DK[1]{\mathcal{K}_{#1,0}}
\newcommand\ord{\mbox{ord}}         
\newcommand\ORD{\mbox{ORD}}         
\newcommand\hord{\mbox{ord}_{\mbox{\scriptsize holo}}}         
\newcommand\Zsum{\sum_{n\in\mathbb{Z}}}
\newcommand\gA{A}   
\newcommand\fpart[1]{\left\{#1\right\}}
\newcommand\mylabel[1]{\label{#1}}
\newcommand\mybibitem[1]{\bibitem{#1}}
\newcommand\thm[1]{\ref{thm:#1}}
\newcommand\lem[1]{\ref{lem:#1}}
\newcommand\corol[1]{\ref{cor:#1}}
\newcommand\conjref[1]{\ref{conj:#1}}
\newcommand\propo[1]{\ref{propo:#1}}
\newcommand\eqn[1]{(\ref{eq:#1})}
\newcommand\sect[1]{\ref{sec:#1}}
\newcommand\subsect[1]{\ref{subsec:#1}}
\begin{document}
\newcommand{\beqs}{\begin{equation*}}
\newcommand{\eeqs}{\end{equation*}}
\newcommand{\beq}{\begin{equation}}
\newcommand{\eeq}{\end{equation}}
\title[Dyson's rank function]
{Transformation properties for Dyson's rank function}

\author{F. G. Garvan}
\address{Department of Mathematics, University of Florida, Gainesville,
FL 32611-8105}
\email{fgarvan@ufl.edu}
\thanks{The author was supported in part by a grant from 
the Simon's Foundation (\#318714).
A preliminary version of this paper was first given on May 10, 2015
at the 
International Conference on Orthogonal Polynomials and
$q$-series at University of Central Florida,
Orlando, in honour of Mourad Ismail's $\mbox{70}^{th}$ birthday.}

\subjclass[2010]{05A19, 11B65, 11F11, 11F37, 11P82, 11P83, 33D15}

\date{June 17, 2016}                   


\keywords{Dyson's rank function, Maass forms, mock theta functions, partitions,
Mordell integral}

\begin{abstract}
At the 1987 Ramanujan Centenary meeting Dyson asked for a coherent 
group-theoretical structure for Ramanujan's mock theta functions 
analogous to Hecke's theory of modular forms. 
Many of Ramanujan's mock theta functions can be written in terms of $R(\zeta,q)$,
where $R(z,q)$ is 
the two-variable generating function of Dyson's rank function and 
$\zeta$ is a root of unity. Building on earlier work of Watson, Zwegers, Gordon and
McIntosh, and motivated by Dyson's question,
Bringmann, Ono and Rhoades studied transformation properties of $R(\zeta,q)$.
In this paper we
strengthen and extend the results of Bringmann, Rhoades and Ono, and the later work 
of Ahlgren and Treneer. As an application we give a new proof of Dyson's rank
conjecture and show that Ramanujan's Dyson rank identity modulo $5$ 
from the Lost Notebook has an analogue
for all primes greater than $3$.
The proof of this analogue was inspired by recent work of Jennings-Shaffer on overpartition rank differences mod $7$.
\end{abstract}

\maketitle

\section{Introduction}
\mylabel{sec:intro}

Let $p(n)$ denote the number of partitions of $n$. There are many known congruences
for the partition function. The simplest and most famous were found and proved by
Ramanujan:
\begin{align*}
p(5n+4) &\equiv 0 \pmod{5},\\
p(7n+5) &\equiv 0 \pmod{7},\\
p(11n+6) &\equiv 0 \pmod{11}.
\end{align*}
In 1944, Dyson \cite{Dy44} conjectured striking combinatorial interpretations of the
first two congruences. He defined the rank of a partition as the largest part minus the number
of parts and conjectured that the rank mod $5$ divided the partitions of $5n+4$ into $5$
equal classes and that the rank $7$ divided the partitions of $7n+5$ into $7$ equal classes.
He conjectured the existence a partition statistic he called the \textit{crank} which
would likewise divide the partitions of $11n+6$ into $11$ equal classes. Dyson's mod $5$ and
$7$ rank conjectures were proved by Atkin and Swinnerton-Dyer \cite{At-Sw}. The mod $11$ 
crank conjecture was solved by the author and Andrews \cite{An-Ga88}.

Let $N(m,n)$ denote the number of partitions of $n$ with rank $m$. We let $R(z,q)$ denote
the two-variable generating function for the Dyson rank function so that
$$
R(z,q) = \sum_{n=0}^\infty \sum_m N(m,n)\,z^m\,q^n.
$$

Throughout this paper we will use the standard $q$-notation:
\begin{align*}
(a;q)_{\infty} &= \prod_{k=0}^\infty (1-aq^k),
\\
(a;q)_{n} &= \frac{(a;q)_{\infty}}{(aq^n;q)_{\infty}},
\\
(a_1,a_2,\dots,a_j;q)_{\infty} 
&= (a_1;q)_{\infty}(a_2;q)_{\infty}\dots(a_j;q)_{\infty},
\\
(a_1,a_2,\dots,a_j;q)_{n} 
&=
(a_1;q)_{n}(a_2;q)_{n}\dots(a_j;q)_{n}.
\end{align*}

We have the following identities for the rank generating function $R(z,q)$:
\begin{align}
R(z,q) 
&= 1 + \sum_{n=1}^\infty \frac{q^{n^2}}{(zq,z^{-1}q;q)_n}
\mylabel{eq:Rzqid1}\\
&= \frac{1}{(q;q)_\infty} \Lpar{1 + \sum_{n=1}^\infty 
   \frac{(-1)^n (1 + q^n)(1-z)(1-z^{-1})}
   {(1 - zq^n)(1 - z^{-1}q^n)} \, q^{\frac{1}{2}n(3n+1)}}.
\mylabel{eq:Rzqid2}
\end{align}
See \cite[Eqs (7.2), (7.6)]{Ga88a}.

Let $N(r,t,n)$ denote the number of partitions of $n$ with congruent to $r$ mod $t$,
and let
$\zeta_p=\exp(2\pi i/p)$. Then
\beq
R(\zeta_p,q) = \sum_{n=0}^\infty \Lpar{\sum_{k=0}^{p-1} N(k,p,n)\,\zeta_p^k}\,q^n.
\mylabel{eq:Rzetaid}
\eeq
We restate
\begin{drconj}[1944]
\mylabel{conj:DRC}
\begin{align}
N(0,5,5n+4) &= N(1,5,5n+4) = \cdots  = N(4,5,5n+4) = \tfrac{1}{5}p(5n + 4),
\mylabel{eq:Dysonconj5}\\
N(0,7,7n+5) &= N(1,7,7n+5) = \cdots  = N(6,7,7n+5) = \tfrac{1}{7}p(7n + 5).    
\mylabel{eq:Dysonconj7}
\end{align}
\end{drconj}
Dyson's rank conjecture was first proved by Atkin and Swinnerton-Dyer \cite{At-Sw}.
As noted in \cite{Ga88a}, \cite{Ga88b}, 
Dyson's rank conjecture follows from an identity
in Ramanujan's Lost Notebook \cite[p.20]{Ra88}, \cite[Eq. (2.1.17)]{An-Be-RLNIII}.
We let $\zeta_5$ be a primitive $5$th root of unity. Then
\begin{align}
R(\zeta_5,q) &= A(q^5) + (\zeta_5 + \zeta_5^{-1}-2)\,\phi(q^5) + q\,B(q^5) + 
(\zeta_5+\zeta_5^{-1})\,q^2\,C(q^5)
\mylabel{eq:Ramid5}\\
&\quad 
- (\zeta_5+\zeta_5^{-1})\,q^3\left\{D(q^5) - (\zeta_5^2 + \zeta_5^{-2}  - 2)\frac{\psi(q^5)}{q^5}
\right\},
\nonumber
\end{align}
where
$$
A(q) = \frac{(q^2,q^3,q^5;q^5)_\infty}{(q,q^4;q^5)_\infty^2},\,
B(q) = \frac{(q^5;q^5)_\infty}{(q,q^4;q^5)_\infty},\,
C(q) = \frac{(q^5;q^5)_\infty}{(q^2,q^3;q^5)_\infty},\,
D(q) = \frac{(q,q^4,q^5;q^5)_\infty}{(q^2,q^3;q^5)_\infty^2},
$$
and
$$
\phi(q) = -1 + \sum_{n=0}^\infty \frac{q^{5n^2}}{(q;q^5)_{n+1} (q^4;q^5)_n},\qquad
\psi(q) =  -1 + 
\sum_{n=0}^\infty \frac{q^{5n^2}}{(q^2;q^5)_{n+1} (q^3;q^5)_n}.
$$
We recognize the functions $A(q)$, $B(q)$, $C(q)$, $D(q)$ as being modular forms
except for a power of $q$. We rewrite Ramanujan's identity \eqn{Ramid5} in 
terms of generalized eta-products:
$$
\eta(z) := q^{\frac{1}{24}} \prod_{n=1}^\infty (1 - q^n),\qquad q=\exp(2\pi iz),
$$
and
$$
\eta_{t,r}(z) := q^{\tfrac{t}{2}\,P(r/t)} \prod_{\substack{n>0\\ n \equiv\pm r\pmod{t}}}
(1-q^n),
$$
where $P(t):=\fpart{t}^2 - \fpart{t} + 1/6$ and $0 < r < t$ are integers. We have
\begin{align}
&q^{-\frac{1}{24}}\left(R(\zeta_5,q) 
 - (\zeta_5 + \zeta_5^4 - 2) \,\phi(q^5) 
 + (1+ 2 \zeta_5 + 2\zeta_5^4 ) \,q^{-2}\,\psi(q^5)\right)  
\mylabel{eq:Ramid5V2}\\
&
= \frac{\eta(25z) \, \eta_{5,2}(5z)}{\eta_{5,1}(5z)^2}
+
\frac{\eta(25z)}{\eta_{5,1}(5z)}
+ (\zeta_5 + \zeta_5^4) \, 
\frac{\eta(25z)}{\eta_{5,2}(5z)}
- (\zeta_5 + \zeta_5^4) \, 
\frac{\eta(25z)\, \eta_{5,1}(5z)}{\eta_{5,2}(5z)^2}.
\nonumber
\end{align}
Equation \eqn{Ramid5}, or equivalently \eqn{Ramid5V2}, give the
$5$-dissection of the $q$-series expansion of $R(\zeta_5,q)$.
We observe that the function on the right side of \eqn{Ramid5V2} is a weakly
holomorphic modular form (with multiplier) of weight $\tfrac{1}{2}$
on the group $\Gamma_0(25) \cap \Gamma_1(5)$.

In Theorem \thm{mainJp0} below we generalize the analogue of 
Ramanujan's result \eqn{Ramid5V2} to all primes $p>3$.
For $p>3$ prime and $1 \le a \le \tfrac{1}{2}(p-1)$ define
\beq
\Phi_{p,a}(q) 
:= 
\begin{cases}
\displaystyle
\sum_{n=0}^\infty \frac{q^{pn^2}} 
{(q^a;q^p)_{n+1} (q^{p-a};q^p)_n},
 & \mbox{if $0 < 6a < p$,}\\
\displaystyle
-1 + \sum_{n=0}^\infty \frac{q^{pn^2}} 
{(q^a;q^p)_{n+1} (q^{p-a};q^p)_n},
 & \mbox{if $p < 6a < 3p$,}\\
\end{cases}
\mylabel{eq:phipadef}
\eeq
and
\begin{align}      
\Rpz{z} 
&:= q^{-\frac{1}{24}} R(\zeta_p,q) 
- 
\chi_{12}(p) \,
\sum_{a=1}^{\frac{1}{2}(p-1)} (-1)^a 
 \,
  \left( \zeta_p^{3a + \frac{1}{2}(p+1)} + \zeta_p^{-3a - \frac{1}{2}(p+1)}  
  \right.
\mylabel{eq:Rpdef}\\
&\hskip 2in \left.  - 
    \zeta_p^{3a + \frac{1}{2}(p-1)} - \zeta_p^{-3a - \frac{1}{2}(p-1)}\right)
  \,
q^{\tfrac{a}{2}(p-3a)-\tfrac{p^2}{24}}\,\Phi_{p,a}(q^p),
\nonumber               
\end{align}      
where
\beq
\chi_{12}(n) := \leg{12}{n} = 
\begin{cases}
1 & \mbox{if $n\equiv\pm1\pmod{12}$,}\\
-1 & \mbox{if $n\equiv\pm5\pmod{12}$,}\\
0 & \mbox{otherwise,}
\end{cases}
\mylabel{eq:chi12}
\eeq
and as usual $q=\exp(2\pi iz)$ with  $\Im(z)>0$.
One of our main results is
\begin{theorem}
\mylabel{thm:mainJp0}
Let $p > 3$ be prime. Then the function 
$$
\eta(p^2 z) \, \Rpz{z} 
$$
is a weakly holomorphic modular form of weight $1$ on the group
$\Gamma_0(p^2) \cap \Gamma_1(p)$.
\end{theorem}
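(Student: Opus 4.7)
The plan is to realize $\mathcal{R}_p(z)$ as a weakly holomorphic modular form of weight $1/2$ (with a definite multiplier system) on a congruence subgroup of level $p^2$, and then multiply by the weight-$1/2$ factor $\eta(p^2 z)$ to obtain a weight-$1$ form with trivial character on $\Gamma_0(p^2)\cap\Gamma_1(p)$. The engine behind the modularity is the mock-modular theory of Zwegers and the subsequent work of Bringmann--Ono--Rhoades: the function $q^{-1/24}R(\zeta_p,q)$ is a mock modular form of weight $1/2$ whose non-holomorphic Eichler completion $q^{-1/24}\widehat{R}(\zeta_p,q)$ transforms under $\SLZ$ as a genuine form, with the obstruction measured by a Mordell integral whose derivative (or shadow) is an explicit unary theta series supported on residues $\pm1\pmod p$ (once $\zeta=\zeta_p$ is specialized).

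The first step is to record the transformation of $q^{-1/24}R(\zeta_p,q)$ under both generators $T:z\mapsto z+1$ and $S:z\mapsto -1/z$, using \eqn{Rzqid1}--\eqn{Rzqid2} together with the Mordell/Mock-theta machinery. The second step is the analogous analysis of each summand $q^{\frac{a}{2}(p-3a)-\frac{p^2}{24}}\,\Phi_{p,a}(q^p)$, showing it too is the holomorphic part of a weight-$1/2$ harmonic Maass form whose shadow is a specific theta series supported on a single pair of residue classes modulo $p$. These $\Phi_{p,a}$ are direct generalizations of Ramanujan's $\phi,\psi$ (the $p=5$, $a=1,2$ cases), and their $q$-series can be written as Appell--Lerch-type sums, whose completions are well understood.

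The heart of the argument, and the main obstacle, is Step 3: matching shadows. One must show that the specific trigonometric coefficients
$$
(-1)^a\,\chi_{12}(p)\,\bigl(\zeta_p^{3a+\frac{p+1}{2}}+\zeta_p^{-3a-\frac{p+1}{2}}-\zeta_p^{3a+\frac{p-1}{2}}-\zeta_p^{-3a-\frac{p-1}{2}}\bigr)
$$
appearing in \eqn{Rpdef} are precisely the coefficients forced by demanding that the shadow of the completion of $q^{-1/24}R(\zeta_p,q)$ be cancelled termwise, as $a$ ranges over $1\le a\le (p-1)/2$, by the shadows of the $\Phi_{p,a}$ corrections. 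This amounts to partitioning the theta series attached to the shadow of $R(\zeta_p,q)$ into its residue classes mod $p$ and identifying each component with the shadow of the appropriate $\Phi_{p,a}$; the factor $\chi_{12}(p)$ reflects the quadratic residue behaviour in the $p=5,7,11,\dots$ cases. Once this cancellation is verified, $\mathcal{R}_p(z)$ is purely holomorphic on $\mathfrak{H}$ and transforms as a weight-$1/2$ form on a congruence subgroup whose level is controlled by the $q^p$-argument and by the denominators $p$ appearing in the roots of unity.

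With modularity of $\mathcal{R}_p(z)$ established, the remaining steps are routine. A cusp-by-cusp expansion of each ingredient in \eqn{Rpdef} using standard transformation laws for $\eta$, $\eta_{p,r}$, and the Eichler integrals shows that $\mathcal{R}_p(z)$ is meromorphic at every cusp of $\Gamma_0(p^2)\cap\Gamma_1(p)$. Multiplying by $\eta(p^2z)$, which is a weight-$1/2$ holomorphic form on $\Gamma_0(p^2)$ with a known multiplier, absorbs the multiplier of $\mathcal{R}_p(z)$ and upgrades the weight to $1$. A final check that the combined multiplier is trivial on $\Gamma_0(p^2)\cap\Gamma_1(p)$, together with holomorphicity of $\eta(p^2z)$ at $\infty$ (which keeps $\eta(p^2z)\mathcal{R}_p(z)$ weakly holomorphic rather than holomorphic), completes the proof.
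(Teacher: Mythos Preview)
Your outline is the same strategy the paper follows: write $q^{-1/24}R(\zeta_p,q)$ and each $q^{\frac{a}{2}(p-3a)-p^2/24}\Phi_{p,a}(q^p)$ as holomorphic parts of weight-$1/2$ harmonic Maass forms, choose the trigonometric coefficients so the shadows cancel, and then verify cusp conditions and the multiplier. The paper's Proposition~\propo{theta1id} is precisely your ``partition the shadow into residue classes mod $p$'' step, and the paper's Section~\subsect{pfmainJp2} is your Step~4.

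The gap is in what you call a ``final check.'' Shadow cancellation tells you $\mathcal{R}_p(z)$ is holomorphic on $\mathfrak{h}$, but it does not by itself tell you on which congruence subgroup the transformation law holds. The off-the-shelf Bringmann--Ono result (Theorem~\thm{BrOnD}) only gives level $576\,p^4$; getting down to $\Gamma_0(p^2)\cap\Gamma_1(p)$ is the whole point and is not a matter of reading off the denominators. The paper does this by first multiplying by $\eta(z)$ rather than $\eta(p^2 z)$, proving an explicit multiplier formula for $\mathcal{F}_1(\ell/p;z)=\eta(z)\mathcal{G}_1(\ell/p;z)$ on all of $\Gamma_0(p)$ (Theorem~\thm{mainthm}), and then deducing invariance on $\Gamma_0(p^2)\cap\Gamma_1(p)$ for both the $R$-piece and the $\Phi_{p,a}$-pieces (Corollaries~\corol{Ftrans} and~\corol{F2trans}). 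That multiplier computation---carried out generator by generator using Rademacher's generators $V_k$ for $\Gamma_0(p)$ and a nontrivial cocycle check mod $2p^2$---is the technical core, and your proposal does not indicate how you would perform it or replace it. Without it you cannot conclude the transformation law on the stated group.
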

\begin{remark}
The form of this result is suggested by Ramanujan's identity \eqn{Ramid5V2}.
The proof of this result uses the theory of weak harmonic Maass forms and
was inspired by a recent result of Jennings-Shaffer \cite{JS16a} on 
overpartition rank differences mod $7$. Jennings-Shaffer was the first to 
prove a result of this type using the theory of weak harmonic Maass forms.  
\end{remark}
As a consequence we have
\begin{cor}
\mylabel{cor:Kp0}
Let $p > 3$ be prime and $s_p=\tfrac{1}{24}(p^2-1)$. Then the function 
$$
\prod_{n=1}^\infty (1-q^{pn})\, 
\Bigg(
\sum_{n=\CL{\frac{1}{p}(s_p)}}^\infty \left(\sum_{k=0}^{p-1} N(k,p,pn -s_p)\,\zeta_p^k\right)q^n
$$                      
is a weakly holomorphic modular form of weight $1$ on the group
$\Gamma_1(p)$.
\end{cor}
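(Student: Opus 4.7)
The plan is to recognize the function in the statement as the $U_p$-image of the weakly holomorphic modular form $\eta(p^2z)\,\Rpz{z}$ produced by Theorem \ref{thm:mainJp0}. Recall that the Atkin operator $U_p$, defined by $U_p\bigl(\sum a_n q^n\bigr)=\sum a_{pn}q^n$, preserves weight, commutes with multiplication by any power series in $q^p$ (in the sense that $U_p(g(q^p)h(q))=g(q)\,U_p h(q)$), and takes weakly holomorphic modular forms of weight $1$ on $\Gamma_0(p^2)\cap\Gamma_1(p)$ to weakly holomorphic modular forms of weight $1$ on $\Gamma_0(p)\cap\Gamma_1(p)=\Gamma_1(p)$. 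Once this identification is in place, the corollary follows at once.

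To make the identification, I would first absorb $\eta(p^2z)=q^{p^2/24}\prod_{n\ge1}(1-q^{p^2n})$ into $\Rpz{z}$. Using the shifts $p^2/24-1/24=s_p$ and $p^2/24+\tfrac{a}{2}(p-3a)-p^2/24=\tfrac{a}{2}(p-3a)$, one obtains
\begin{equation*}
\eta(p^2z)\,\Rpz{z}=q^{s_p}\prod_{n\ge1}(1-q^{p^2n})\,R(\zeta_p,q)-\chi_{12}(p)\sum_{a=1}^{(p-1)/2}C_a\,q^{\frac{a}{2}(p-3a)}\prod_{n\ge1}(1-q^{p^2n})\,\Phi_{p,a}(q^p),
\end{equation*}
where $C_a$ denotes the root-of-unity coefficient appearing in \eqn{Rpdef}. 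Since $\prod_{n\ge1}(1-q^{p^2n})$ and $\Phi_{p,a}(q^p)$ are both power series in $q^p$, and $\prod_{n\ge1}(1-q^{p^2n})$ descends under $U_p$ to $\prod_{n\ge1}(1-q^{pn})$, applying $U_p$ to the first summand reduces to $\prod_{n\ge1}(1-q^{pn})\,U_p\bigl(q^{s_p}R(\zeta_p,q)\bigr)$. Writing $R(\zeta_p,q)=\sum_{m\ge0}\bigl(\sum_{k=0}^{p-1}N(k,p,m)\zeta_p^k\bigr)q^m$, the surviving terms are precisely those with $m+s_p\in p\mathbb{Z}$, that is, $m=pn-s_p$ for $n\ge\CL{s_p/p}$, reproducing exactly the expression displayed in the statement.

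It remains to show that every term in the second sum dies under $U_p$. After commuting the $q^p$-series past $U_p$, each such term is $q^{\frac{a}{2}(p-3a)}$ times a power series in $q^p$, and modulo $p$ the exponent $\tfrac{a}{2}(p-3a)$ reduces to $-3a^2/2$. For $p>3$ prime and $1\le a\le\tfrac{1}{2}(p-1)$ we have $\gcd(6a,p)=1$, so this residue is nonzero and $U_p$ annihilates the term. The only real obstacle is straightforward bookkeeping: tracking the $q$-power shifts through the $\eta(p^2z)$ factor and verifying the mod-$p$ non-vanishing of the exponents $\tfrac{a}{2}(p-3a)$ for every admissible $a$. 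All the substantive modular-form content is supplied by Theorem \ref{thm:mainJp0} together with the standard $U_p$-action on forms of level divisible by $p^2$.
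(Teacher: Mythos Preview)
Your proposal is correct and is essentially the paper's own argument. The paper defines $\mathcal{K}_{p,0}(z):=\sin(\pi/p)\,\stroke{\Jpz{z}}{U_p}{1}$, identifies it with the displayed function by the same ``straightforward calculation'' you carry out (the $\Phi_{p,a}$-terms contribute $q$-powers with exponent $\tfrac{a}{2}(p-3a)\equiv -3a^2/2\not\equiv 0\pmod p$ and so are annihilated by $U_p$), and then proves the $\Gamma_1(p)$-transformation law by writing $T_kA=B_kT_{k'}$ with $B_k\in\Gamma_0(p^2)\cap\Gamma_1(p)$ and invoking Theorem~\ref{thm:mainJp0}; this last step is precisely the standard $U_p$ level-lowering fact you cite.
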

\begin{remark}
Corollary \corol{Kp0} is the case $m=0$ of Proposition \propo{Kpmprop}(i) below.
\end{remark}

In Ramanujan's identity \eqn{Ramid5} we see that the coefficient of $q^{5n+4}$ is zero,
and this implies Dyson's rank $5$ conjecture \eqn{Dysonconj5} in view of \eqn{Rzetaid}.
The analog of \eqn{Ramid5} for the prime $7$ does not appear in Ramanujan's lost notebook
although Ramanujan wrote  the left side \cite[p.19]{Ra88} and wrote some of the functions
involved in coded form on \cite[p.71]{Ra88}. The complete identity is given by \
Andrews and Berndt \cite[Eq. (2.1.42)]{An-Be-RLNIII}. As noted by the author \cite[Theorem 4, p.20]{Ga88b} Ramanujan's identity \eqn{Ramid5} is actually equivalent to 
Atkin and Swinnerton-Dyer's \cite[Theorem 4, p.101]{At-Sw}.
In Section \sect{DRC}
we give a new proof of Dyson's Conjecture \conjref{DRC}
as well as some of Atkin and Hussain's \cite{At-Hu} results on the rank mod $11$
and O'Brien's \cite{OB} results on the rank mod $13$.

We now explain the connection between Ramanujan's mock theta functions
and Dyson's rank functions. 
On \cite[p.20]{Ra88} Ramanujan gives four identities for some 
mock theta functions
of order $5$. For example,
$$
\chi_0(q) = 2 + 3 \phi(q) - A(q),
$$
where 
$$
\chi_0(q) = \sum_{n=0}^\infty \frac{q^n}{(q^{n+1};q)_n} = 1 + \sum_{n=0}^\infty
\frac{q^{2n+1}}{(q^{n+1};q)_{n+1}}.
$$
The other three identities correspond to \cite[Eqs (3.2), (3.6), (3.7)]{An-Ga89}.
These identities are the Mock Theta Conjectures which were later proved by Hickerson \cite{Hi88a}.
Thus Ramanujan's mock theta functions of order $5$ are related to $R(z,q)$ when $z=\zeta_5$.
   All of Ramanujan's third order mock theta functions can also be written in
terms of $R(z,q)$.  For example,
\begin{align*}
f(q) &= \sum_{n\ge0} \frac{q^{n^2}}{(-q;q)_n^2} = R(-1,q),\\
\omega(q) &= \sum_{n\ge0} \frac{q^{2n(n+1)}}{(q;q^2)_{n+1}^2} =  g(q;q^2),
\end{align*}
where 
\beq
g(x,q) = x^{-1}\left(-1 + \frac{1}{1-x} R(x,q) \right).
\mylabel{eq:gR}
\eeq
A catalogue of these and analogous identities for Ramanujan's mock theta functions
is given by
Hickerson and Mortenson \cite[Section 5]{Hi-Mo12}.

The main emphasis of this paper is transformations for Dyson's rank function
$R(\zeta,q)$ and thus for Ramanujan's mock theta functions.
We begin with a quote from Freeman Dyson.
\begin{quote}
The mock theta-functions give us tantalizing hints of a grand synthesis
still to be discovered. Somehow it should be possible to build them
into a coherent group-theoretical structure, analogous to the structure
of modular forms which Hecke built around the old theta-functions of
Jacobi. This remains a challenge for the future.

\smallskip
\hskip2in Freeman Dyson, 1987

\hskip2in Ramanujan Centenary Conference
\end{quote}
In this paper we continue previous work on Dyson's challenge.
First we describe the genesis of this work.
Watson \cite{Wa36} found transformation formulas for the third order 
functions in 
terms of Mordell integrals. For example,
$$
q^{-1/24} f(q) = 2 \sqrt{\frac{2\pi}{\alpha}} q_1^{4/3} \omega(q_1^2)
 + 4 \sqrt{\frac{3\alpha}{2\pi}} \int_0^\infty e^{-3\alpha x^2/2}
\frac{\sinh \alpha x}{\sinh 3\alpha x/2} \, dx,
$$
where
$$
q=\exp(-\alpha),\quad q_1 = \exp(-\beta),\quad \alpha \beta = \pi^2.
$$
The derivation of
transformation formulas for the other Ramanujan mock theta functions was carried
out
in a series of papers by Gordon and McIntosh. A summary of these results can be found
in \cite{Go-McI}.

The big breakthrough came when Zwegers \cite{Zw00}, \cite{Zw-thesis}  realised
how Ramanujan's mock theta functions occurred as the holomorphic part of certain
real analytic modular forms. An example for the third order functions $f(q)$, $\omega(q)$,
$\omega(-q)$ is given in
\begin{theorem}[Zwegers \cite{Zw00}]
Define $F(z) = (f_0, f_1, f_2)^T$ by

$$
f_0(z) = q^{-1/24} f(q), \quad
f_1(z) = 2q^{1/3} \omega(q^{1/2}), \quad
f_1(z) = 2q^{1/3} \omega(-q^{1/2}), 
$$
where
$q=\exp(2\pi iz),\quad z \in \mathfrak{h}$.
Define
$$
G(z) = 2i\sqrt{3} \int_{-\overline{z}}^{i\infty} 
\frac{(g_0(\tau),g_1(\tau),g_2(\tau))^T}{\sqrt{-i(\tau+z)}}\,d\tau,
$$
where
\begin{align*}
g_0(\tau) &= \sum_n (-1)^n \left(n+\frac{1}{3}\right) q^{3(n + 1/3)^2/2},\\
g_1(\tau) &= -\sum_n \left(n+\frac{1}{6}\right) q^{3(n + \tfrac{1}{6})^2/2},\\
g_2(\tau) &= \sum_n \left(n+\frac{1}{3}\right) q^{3(n + 1/3)^2/2}.
\end{align*}
Then
$$
H(\tau) = F(\tau) - G(\tau)
$$
is a (vector-valued) real analytic modular form of weight $1/2$ 
satisfying
$$
H(\tau+1) = \begin{pmatrix}
\zeta_{24}^{-1} & 0 & 0 \\
0 & 0 & \zeta_3 \\
0 & \zeta_3 & 0 
\end{pmatrix}\,H(\tau),
$$
$$
\frac{1}{\sqrt{-i\tau}}H\left(\frac{-1}{\tau}\right) = 
\begin{pmatrix}
0 & 1 & 0 \\
1 & 0 & 0 \\
0 & 0 & -1
\end{pmatrix}\,H(\tau).
$$
\end{theorem}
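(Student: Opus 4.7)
The plan is to follow Zwegers's Appell--Lerch strategy from his thesis. First, I would rewrite each holomorphic component $f_j(\tau)$ as a value of the Lerch-type sum
$$
\mu(u,v;\tau) \;=\; \frac{e^{\pi i u}}{\vartheta(v;\tau)}\sum_{n\in\mathbb{Z}} \frac{(-1)^n q^{n(n+1)/2} e^{2\pi i n v}}{1 - e^{2\pi i u}q^n}
$$
at suitable rational elliptic parameters $(u,v)$, after possibly rescaling $\tau\mapsto 2\tau$ to accommodate the half-integral powers $q^{1/2}$ appearing in $\omega(\pm q^{1/2})$. The identities recorded in the excerpt, namely $f(q)=R(-1,q)$ and $\omega(q)=g(q;q^2)$ with $g(x,q)$ as in \eqn{gR}, together with standard Bailey/Watson-type $q$-hypergeometric transformations, convert the defining series for $f$ and $\omega$ into $\mu$-values. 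For the three choices corresponding to $f_0,f_1,f_2$ the difference $u-v$ takes rational values modulo the lattice that are exactly what is needed for the shadows to be supported at $\tfrac{1}{6}\mathbb{Z}$.

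Next, I would invoke Zwegers's completion theorem: the function
$$
\widehat\mu(u,v;\tau) \;:=\; \mu(u,v;\tau) + \tfrac{i}{2}\,R(u-v;\tau),
$$
where $R(w;\tau)$ is an explicit non-holomorphic integral of a unary theta series of weight $\tfrac{3}{2}$, transforms like a genuine weight-$\tfrac{1}{2}$ Jacobi form under $\mathrm{SL}_2(\mathbb{Z})$ and is real analytic on $\mathfrak{h}$. Applied to our three parameter choices and using Zwegers's integral representation of $R$, the non-holomorphic corrections combine into the vector period integral
$$
G(\tau) \;=\; 2i\sqrt{3}\int_{-\overline{\tau}}^{i\infty} \frac{(g_0(w),g_1(w),g_2(w))^T}{\sqrt{-i(w+\tau)}}\,dw,
$$
because the unary theta series $g_0,g_1,g_2$ listed in the statement are precisely the shadows of $f_0,f_1,f_2$; their exponents $q^{3(n+c)^2/2}$ reflect the rescaled modulus $3\tau$ forced by the elliptic parameters, and the constant $2i\sqrt{3}$ is the bookkeeping prefactor that converts Zwegers's $R$ to the stated form. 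Hence $H = F - G$ equals the vector of completions $(\widehat{f_0},\widehat{f_1},\widehat{f_2})^T$, which is real analytic on $\mathfrak{h}$ by construction.

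To conclude, I would verify the $T$ and $S$ matrices directly from the transformation laws of $\widehat\mu$. For $T:\tau\mapsto\tau+1$, the prefactors $q^{-1/24}$ and $q^{1/3}$ produce the scalars $\zeta_{24}^{-1}$ and $\zeta_3$, and the substitution $q^{1/2}\mapsto -q^{1/2}$ interchanges $\omega(q^{1/2})$ with $\omega(-q^{1/2})$, giving the stated off-diagonal pattern. For $S:\tau\mapsto -1/\tau$, the inversion formula for $\widehat\mu$ supplies the global automorphy factor $\sqrt{-i\tau}$; the exponential $e^{-\pi i(u-v)^2/\tau}$ is absorbed by character relations at the rational parameters, and tracking how $(u,v)$ permutes under the modular inversion shows that $f_0$ and $f_1$ are interchanged while $f_2$ acquires a sign, matching the second matrix. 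The main obstacle is the first step: realizing $f$ and $\omega(\pm q^{1/2})$ as explicit $\mu$-values so that the three shadows assemble exactly into $(g_0,g_1,g_2)^T$; once that identification is in hand, everything reduces to Zwegers's general machinery for the completed Appell--Lerch sum.
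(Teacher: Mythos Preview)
The paper does not prove this theorem; it is stated in the introduction as background, attributed to Zwegers \cite{Zw00}, with no argument given. So there is no ``paper's own proof'' to compare against.

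Your sketch is a reasonable outline of how Zwegers's \emph{thesis} \cite{Zw-thesis} machinery would yield the result, via the completed Appell--Lerch sum $\widehat\mu$. Note, however, that the theorem as cited comes from the earlier paper \cite{Zw00}, which predates the $\mu$-function formalism: there Zwegers works directly with Watson's transformation formulas for $f$ and $\omega$ (expressing the Mordell integrals as period integrals of the weight-$\tfrac{3}{2}$ theta series $g_j$), much closer in spirit to the computations in Section~\sect{prelim} of the present paper than to your $\widehat\mu$ approach. Either route is valid; the Mordell-integral method is more elementary and self-contained for these three specific functions, while the $\widehat\mu$ method you describe is more systematic and explains why the shadows take the stated shape. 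Your proposal is correct in outline, but be aware that the ``main obstacle'' you flag---writing $f$ and $\omega(\pm q^{1/2})$ as explicit $\mu$-values with the right parameters---is genuinely the bulk of the work, and you have not indicated which parameters $(u,v)$ you intend to use or verified that the resulting shadows are exactly $g_0,g_1,g_2$ with the stated constant $2i\sqrt{3}$.
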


Bringmann and Ono \cite{Br-On10}, \cite{Br08b}, \cite{On08} 
extended this theorem to $R(\zeta,q)$ when
$\zeta$ is a more general root of unity. 
\begin{theorem}[Bringmann and Ono, Theorem 3.4 \cite{Br-On10}]
\mylabel{thm:BOvecMF}
Let $c$ be an odd positive integer and let $0 < a < c$. Then $q^{-1/24} R(\zeta_c^a,q)$ is
the holomorphic part of a component of a vector valued weak Maass form of weight $1/2$
for the full modular group $\SL_2(\mathbb{Z})$.
\end{theorem}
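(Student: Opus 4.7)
The plan follows Zwegers' program: recast $q^{-1/24}R(\zeta_c^a,q)$ as an Appell--Lerch-type sum, attach the standard non-holomorphic Mordell-integral correction, and close the orbit of the resulting objects under $\SLZ$.

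Starting from the Hecke-type series \eqn{Rzqid2} and combining the $n$ and $-n$ terms via the symmetry $z\leftrightarrow z^{-1}$, one obtains the compact form
\beqs
R(z,q)\;=\;\frac{1-z}{(q;q)_\infty}\sum_{n\in\mathbb{Z}}\frac{(-1)^n q^{n(3n+1)/2}}{1-zq^n}.
\eeqs
Specializing $z=\zeta_c^a$ and multiplying by $q^{-1/24}=\eta(\tau)/(q;q)_\infty$, one splits the summation index modulo a small integer to linearize the exponent $n(3n+1)/2$ and recognizes the resulting pieces as Zwegers' Appell--Lerch $\mu$-functions, evaluated at specializations of the form $(u,v)=(a/c+\alpha\tau+\beta,\,a/c+\alpha'\tau+\beta')$ with $\alpha,\alpha',\beta,\beta'\in \tfrac{1}{6}\mathbb{Z}$. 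This places $q^{-1/24}R(\zeta_c^a,q)$ explicitly inside Zwegers' framework as a finite sum of $\mu$-functions divided by a theta factor coming from $\eta(\tau)$.

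Next, invoke Zwegers' completion theorem: the non-holomorphic function
\beqs
\hat\mu(u,v;\tau)\;:=\;\mu(u,v;\tau)\;+\;\tfrac{i}{2}\,R_Z(u-v;\tau),
\eeqs
where $R_Z(w;\tau)$ is a Mordell-type period integral of a weight-$3/2$ unary theta function, transforms as a two-variable Jacobi form of weight $1/2$ and index $-1/2$. Applying this correction termwise in the decomposition from step one yields a real-analytic function $\widehat D(a,c;\tau)$ that is annihilated by the weight-$1/2$ hyperbolic Laplacian and whose holomorphic part is exactly $q^{-1/24}R(\zeta_c^a,q)$.

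Finally, close the orbit of $\{\widehat D(a,c;\tau)\}_{0<a<c}$ under $\SLZ=\langle S,T\rangle$. The $T$-action $\tau\mapsto\tau+1$ acts diagonally, multiplying $\widehat D(a,c;\tau)$ by a $24c$-th root of unity, since $q^{-1/24}R(\zeta_c^a,q)$ has its $q$-expansion supported on $-\tfrac{1}{24}+\mathbb{Z}_{\ge 0}$. The $S$-action $\tau\mapsto -1/\tau$ is the crucial step: Zwegers' explicit formula for $\hat\mu(u/\tau,v/\tau;-1/\tau)$ produces linear combinations of $\hat\mu$'s at shifted arguments plus an additional Mordell integral, and after tracking the $\eta$- and theta-transformations, $\widehat D(a,c;-1/\tau)$ expresses as a linear combination of $\{\widehat D(a',c;\tau)\}_{a'}$ together with finitely many auxiliary completions attached to functions of $g(x,q)$-type (cf.~\eqn{gR}). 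The main obstacle is precisely this closure step: one must verify that the Mordell integrals produced by $S$ reassemble into the $R_Z$ corrections already absorbed in $\widehat D(a',c;\tau)$, and that the auxiliary $g$-family is finite and itself stable under $\SLZ$ after a parallel Appell--Lerch decomposition. Once this is established, $T$ and $S$ act by explicit finite matrices on a finite-dimensional space of weak Maass forms, exhibiting $q^{-1/24}R(\zeta_c^a,q)$ as the holomorphic part of a component of a vector-valued weak Maass form of weight $1/2$ for $\SLZ$.
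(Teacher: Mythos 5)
Your route is genuinely different from the paper's. The paper (following Bringmann and Ono) never passes through Zwegers' two-variable $\hat{\mu}$: it starts from the explicit $S$- and $T$-transformation formulas for the Lambert series $\Nell{a}{c}{z}$, $\Mell{a}{c}{z}$, $\mathcal{N}(a,b,c;z)$, $\mathcal{M}(a,b,c;z)$ in which the error terms are Mordell integrals (Theorem \thm{BrOnThm}), rewrites each Mordell integral as a period integral of a weight-$\tfrac{3}{2}$ theta function by Mittag--Leffler expansion plus the Gaussian-integral identity \eqn{Zwid} (Theorem \thm{Jacids}), and only then defines the completions $\mathcal{G}_1$, $\mathcal{G}_2$ and checks closure of the finite family $\mathfrak{V}_c$ under $S$ and $T$ (Theorem \thm{Gtrans}, Corollary \corol{Vc}). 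Your plan instead decomposes the bilateral Lambert series (your displayed identity is correct; it is the bilateral form of \eqn{Rzqid2}) into level-$3$ Appell pieces, hence into finitely many $\mu(u,v;3\tau)$'s, and invokes Zwegers' completion theorem wholesale. That route is viable and is essentially the one taken by Kang and by Zwegers' later work on multivariable Appell functions; it outsources the hard analysis of the Mordell integral to Zwegers, at the cost of heavier bookkeeping with elliptic transformations and theta quotients. The paper's route buys complete explicitness of every component and every multiplier, which the rest of the paper needs.

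There are, however, two genuine gaps. First, and most importantly, you never confront the range conditions on the elliptic arguments. After the index splitting, the specializations $(u,v)$ have $v$ (hence $u-v$) containing nonzero multiples of $\tau$, and for such arguments Zwegers' correction $R_Z(u-v;\tau)$ is not purely non-holomorphic: the finitely many terms in which $\sgn(\nu)$ disagrees with the sign of $\nu+\Im(u-v)/\Im(3\tau)$ contribute extra holomorphic, exponentially large pieces, so the holomorphic part of your $\widehat{D}(a,c;\tau)$ is not automatically $q^{-1/24}R(\zeta_c^a,q)$. This is precisely the phenomenon behind the correction terms $\varepsilon_1$, $\varepsilon_2$ in Theorem \thm{Jacids} and in the corrected definitions \eqn{G2acdef}, \eqn{G2abcdef} --- the case split $0<a/c<\tfrac16$, $\tfrac16<a/c<\tfrac56$, $\tfrac56<a/c<1$ that runs through all of Section \sect{prelim} --- and getting it right is one of the stated contributions of the paper; any proof along your lines must carry out this case analysis. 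Second, your ``main obstacle'' is misplaced: if you complete with $\hat{\mu}$ from the outset, the $S$-action produces no Mordell integrals at all (they cancel identically inside $\hat{\mu}$ by construction), so there is nothing to ``reassemble.'' The real remaining work in your framework is (i) writing down the finite $\mu$-decomposition explicitly and checking that the theta factors $\vartheta(v;3\tau)$ in the denominators do not vanish at your specializations, and (ii) tracking the elliptic transformations needed to return the $S$- and $T$-images of the arguments to your index set; this is exactly where the auxiliary $g(x,q)$-type family (the paper's $\mathcal{G}_2$- and $\mathcal{G}_j(a,b,c)$-components) and the theta-quotient error terms enter, and also where $T$ fails to act diagonally on the full vector. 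As written, both steps are deferred, so the proposal is a plausible program rather than a proof.
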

\begin{remark}
By component of a vector valued  weak Maass form we mean an element of the set $V_c$ defined
on \cite[p.441]{Br-On10}. We give this result explicitly below in
Corollary \corol{Vc}.
The definition of a vector valued Maass form of weight $k$ for 
$\SL_2(\mathbb{Z})$ is given on \cite[p.440]{Br-On10}.
\end{remark}

Bringmann and Ono used this theorem to obtain a
subgroup of the full modular group on which $q^{-1/24} R(\zeta,q)$ is the holomorphic part a
weak Maass form of weight $1/2$. We state their theorem in the case where
$\zeta$ is a $p$th root of unity.
\begin{theorem}[Bringmann and Ono \cite{Br-On10}]
\mylabel{thm:BrOnD}
Let $p>3$ be prime, and $0 < a < p$. 
Define
$$
\theta(\alpha,\beta;z) := \sum_{n\equiv\alpha\pmod{\beta}} n q^{n^2}\qquad 
q=\exp(2\pi iz),
$$
$$
\Theta\left(\frac{a}{p};z\right) :=
\sum_{m\pmod{2p}} (-1)^m \sin\left(\frac{a\pi(6m+1)}{p}\right)
\theta\left(6m+1,12p,\frac{z}{24}\right),
$$
$$
S_1\left(\frac{a}{p};z\right) :=
-i\sin\left(\frac{\pi a}{p}\right) 2\sqrt{2} p
\int_{-\overline{z}}^{i\infty} 
\frac{
       \Theta\left(\frac{a}{p};24p^2\tau\right)\,d\tau
     }
{\sqrt{-i(\tau+z)}},
$$
Then
$$
D\left(\frac{a}{p};z\right) := q^{-p^2}\,R\left(\zeta_p^{a};q^{24p^2}\right) 
- S_1\left(\frac{a}{p};z\right)
$$
is a weak Maass form of weight $1/2$ on $\Gamma_1(576\cdot p^4)$.
\end{theorem}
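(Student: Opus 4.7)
First, apply Theorem~\thm{BOvecMF} with $c = p$, producing a vector-valued weak Maass form $(F_a)_{0 < a < p}$ of weight $\tfrac{1}{2}$ on $\SLZ$ whose $a$-th component has holomorphic part $q^{-1/24} R(\zeta_p^a; q)$. By the standard harmonic weak Maass form structure, the non-holomorphic part of $F_a$ takes the form of an Eichler period integral of a unary weight-$\tfrac{3}{2}$ theta function $g_a$ (the shadow):
\[
F_a^{-}(z) \;=\; c(a,p)\int_{-\bar z}^{i\infty}\frac{g_a(\tau)}{\sqrt{-i(\tau+z)}}\,d\tau.
\]
To identify $g_a$, I would use the Hecke-type series \eqn{Rzqid2}: specializing $z = \zeta_p^a$ one expresses the sum through Zwegers' $\mu$-function, whose shadow is classical, and splitting the summation index modulo $2p$ while converting exponentials into sines pins down $g_a(z) = \Theta(a/p;z)$ with constant $c(a,p) = -i \sin(\pi a/p)\cdot 2\sqrt{2}\,p$.

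Next, rescale $z \mapsto 24 p^2 z$. This substitution sends $q^{-1/24} R(\zeta_p^a;q)$ to $q^{-p^2} R(\zeta_p^a; q^{24 p^2})$, and the further change of variable $\tau \mapsto 24 p^2 \tau$ inside the period integral produces exactly $S_1(a/p;z)$; hence $D(a/p;z) = F_a(24 p^2 z)$. For $\gamma = \bigl(\begin{smallmatrix}\alpha & \beta \\ \gamma_0 & \delta \end{smallmatrix}\bigr) \in \SLZ$, writing $24 p^2\,\gamma z = \gamma^{*}(24 p^2 z)$ with $\gamma^{*} = \bigl(\begin{smallmatrix}\alpha & 24 p^2 \beta \\ \gamma_0/(24 p^2) & \delta\end{smallmatrix}\bigr)$ forces $\gamma \in \Gamma_0(24 p^2)$ to keep $\gamma^{*} \in \SLZ$. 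The transformation of $D$ under $\gamma$ then follows from that of $F_a$ under $\gamma^{*}$, up to multiplier contributions coming from the weight-$\tfrac12$ cocycle (an $\eta$-type $24$-th root of unity) and from the Bringmann--Ono vector-valued representation (which permutes components and contributes $p$-power roots of unity depending on $\alpha,\delta \pmod p$).

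Finally, verify that both multipliers trivialize on $\Gamma_1(576\,p^4)$. The factor $576 = 24^2$ arises because the $\eta$-multiplier depends on $\gamma^{*} \pmod{24}$, which pulls back through the conjugation by $M = \mathrm{diag}(24 p^2,1)$ to congruence conditions on $\gamma$ modulo $576$ and $576 p^2$; the factor $p^4$ arises similarly from the conductor of the vector-valued representation, whose $\pmod{p^2}$ dependence on $\gamma^{*}$ translates back to $\pmod{p^4}$ dependence on $\gamma$. Since rescaling is conformal, the weight-$\tfrac12$ hyperbolic Laplacian commutes with the change of variable and harmonicity is preserved, so $D$ is indeed a weak Maass form of weight $\tfrac12$. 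The main obstacle is precisely this last multiplier bookkeeping: one must extract the exact character of the Bringmann--Ono representation (specified in \cite{Br-On10} via the set $V_p$), combine it with the $\eta$-multiplier, and confirm that $\Gamma_1(576 p^4)$ is exactly the level on which both multipliers simultaneously become trivial.
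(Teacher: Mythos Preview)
This theorem is not proved in the paper; it is quoted from Bringmann and Ono \cite{Br-On10} as background. There is therefore no ``paper's own proof'' to compare against. Your outline is essentially the original Bringmann--Ono argument: build the vector-valued weak Maass form, identify the shadow as the unary theta series $\Theta(a/p;\cdot)$, rescale by $24p^2$, and then check that the resulting multiplier trivializes on $\Gamma_1(576\,p^4)$. As a sketch this is correct, though you yourself flag that the final multiplier bookkeeping is left undone; in the original paper that step is handled by explicit (and somewhat lengthy) case analysis of the transformation behaviour of the period integral and the Lambert series.

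What the present paper does instead is \emph{strengthen} this theorem rather than reprove it. The route is genuinely different from yours: rather than rescaling and then bounding the level from above, the paper works directly with the unscaled functions $\mathcal{G}_1(\ell/p;z)$, multiplies by $\eta(z)$ to get $\mathcal{F}_1(\ell/p;z)$, and computes the exact multiplier on all of $\Gamma_0(p)$ (Theorem~\thm{mainthm}) by tracking the action of Rademacher's generators $T$ and $V_k$ through the explicit vector-valued transformations of Theorem~\thm{Ftrans}. This yields the much sharper group $\Gamma_0(p^2)\cap\Gamma_1(p)$ with a simple multiplier (Corollary~\corol{Ftrans}), from which the Bringmann--Ono level $\Gamma_1(576\,p^4)$ follows a fortiori. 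Your rescaling approach cannot see this improvement, because the substitution $z\mapsto 24p^2 z$ immediately inflates the level by a factor of $(24p^2)^2$; the paper avoids this loss by never leaving the natural $q$-variable.
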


Bringmann, Ono and Rhoades \cite{Br-On-Rh} applied this theorem to prove
\begin{theorem}[Bringmann, Ono and Rhoades; Theorem 1.1 \cite{Br-On-Rh}]
\mylabel{thm:BrOnRh}
Suppose $t\ge5$ is prime, $0\le r_1,r_2 < t$ and $0\le d< t$. Then the following are
true:
\begin{enumerate}
\item[(i)]
If $\leg{1-24d}{t}=-1$, then 
$$
\sum_{n=0}^\infty (N(r_1,t,tn+d)-N(r_2,t,tn+d))q^{24(tn+d)-1}
$$
is a weight $1/2$ weakly holomorphic modular form on $\Gamma_1(576\cdot t^6)$.
\item[(ii)]
Suppose that $\leg{1-24d}{t}=1$. If $r_1,r_2\not\equiv \tfrac{1}{2}(\pm1\pm \alpha)\pmod{t}$,
where $\alpha$ is any integer for which $0\le \alpha < 2t$ and $1-24d\equiv\alpha^2\pmod{2t}$, 
then
$$
\sum_{n=0}^\infty (N(r_1,t,tn+d)-N(r_2,t,tn+d))q^{24(tn+d)-1}
$$
is a weight $1/2$ weakly holomorphic modular form on $\Gamma_1(576\cdot t^6)$.
\end{enumerate}
\end{theorem}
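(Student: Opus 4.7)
The plan is to reduce Theorem \ref{thm:BrOnRh} to Theorem \ref{thm:BrOnD} in two steps: first express the rank-difference generating series as a linear combination of the functions $q^{-t^2}R(\zeta_t^{j},q^{24t^2})$ via character orthogonality, and then project, by a standard sieve, onto the arithmetic progression of exponents $\{24(tn+d)-1 : n \ge 0\}$. In both cases (i) and (ii) the corresponding Eichler-integral pieces $S_1(j/t;z)$ from Theorem \ref{thm:BrOnD} must cancel after the projection, leaving a weakly holomorphic modular form.

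For the first step, orthogonality on $\mathbb{Z}/t\mathbb{Z}$ combined with \eqref{eq:Rzetaid} gives
\[
\sum_{n\geq 0}\bigl(N(r_1,t,n)-N(r_2,t,n)\bigr)\,q^n
=\frac{1}{t}\sum_{j=1}^{t-1}\bigl(\zeta_t^{-jr_1}-\zeta_t^{-jr_2}\bigr)\,R(\zeta_t^j,q),
\]
the $j=0$ term vanishing because $r_1$ and $r_2$ receive equal weight. Substituting $q\mapsto q^{24t^2}$, multiplying by $q^{-t^2}$, and invoking Theorem \ref{thm:BrOnD} on each summand realizes the result as a linear combination of weak Maass forms $D(j/t;z)$ of weight $1/2$ on $\Gamma_1(576\,t^4)$, each modified by an Eichler integral $S_1(j/t;z)$.

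Next, I would apply a standard sieve --- combining $U$- and $V$-operators together with a Gauss-sum twist mod $24t$ --- to isolate the $q$-powers with exponents in the target progression. This preserves weight and the Maass-form property at the cost of enlarging the level by at most a factor of $t^2$, landing on $\Gamma_1(576\,t^6)$ as claimed. What remains is to show that the combined Eichler contribution
\[
\frac{1}{t}\sum_{j=1}^{t-1}\bigl(\zeta_t^{-jr_1}-\zeta_t^{-jr_2}\bigr)\,S_1(j/t;z)
\]
vanishes identically after the sieve. Since $S_1(j/t;z)$ is, up to a sine factor, the Eichler integral of $\Theta(j/t;24t^2\tau)$, whose Fourier support is concentrated on exponents of the form $t^2(6m+1)^2/24$, the non-holomorphic contribution to the coefficient at exponent $24(tn+d)-1$ is nonzero only when $24d-1 \pmod{t}$ lies in the image of $m \mapsto -(6m+1)^2$. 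In case (i) the Legendre symbol hypothesis $\leg{1-24d}{t}=-1$ rules this out, so the Eichler piece vanishes automatically. In case (ii) only the $m$ with $6m+1\equiv\pm\alpha\pmod{2t}$ survive; expanding $\sin(\pi ja/t) = (\zeta_{2t}^{ja}-\zeta_{2t}^{-ja})/(2i)$ and using $\sum_{j=0}^{t-1}\zeta_t^{jc} = t\cdot\mathbf{1}_{t\mid c}$ reduces the residual sum to a $\mathbb{Z}$-linear combination of indicator functions $\mathbf{1}_{r_i\equiv \frac{1}{2}(\pm 1\pm\alpha)\pmod t}$, all of which are killed by the exclusion hypothesis on $r_1,r_2$.

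The conceptual steps are routine; the main obstacle lies in case (ii), where one must verify that the trigonometric/orthogonality collapse really produces \emph{exactly} the four exceptional residues $\{\tfrac{1}{2}(\pm 1\pm\alpha)\bmod t\}$, and not a larger or smaller list. A careful analysis of the symmetries of $\Theta(j/t;\,\cdot\,)$ under the involutions $m \mapsto -m-\tfrac{1}{3}$ and $j \mapsto t-j$ is needed to match the list exactly. A secondary technical issue is the level bookkeeping: one must check that a single mod-$t$ sieve applied to a form on $\Gamma_1(576\,t^4)$ does not overshoot $\Gamma_1(576\,t^6)$, a standard but mildly fussy Atkin--Lehner computation.
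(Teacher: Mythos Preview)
The paper does not prove Theorem~\ref{thm:BrOnRh}: it is quoted verbatim from Bringmann--Ono--Rhoades \cite{Br-On-Rh} as background, and the paper's contribution is to \emph{strengthen} it (see Theorem~\ref{thm:Kpthm} and Corollary~\ref{cor:Rpm}, where the group is improved from $\Gamma_1(576\,t^6)$ to $\Gamma_1(p)$ with a multiplier). So there is no ``paper's own proof'' to compare against.

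That said, your sketch is a faithful outline of the original argument in \cite{Br-On-Rh}: orthogonality to express rank differences as combinations of $R(\zeta_t^j,q)$, invoke Theorem~\ref{thm:BrOnD} to get weak Maass forms on $\Gamma_1(576\,t^4)$, then sieve onto the progression $24(tn+d)-1$ and check that the non-holomorphic (theta-integral) part dies. Your handling of case~(i) via the Legendre-symbol obstruction and of case~(ii) via the explicit support of $\Theta(j/t;\cdot)$ on $(6m+1)^2$ is the right mechanism.

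By contrast, the present paper takes a genuinely different route to its stronger result. Rather than working on the large group and sieving, it first establishes an exact transformation law for $\eta(z)\,\mathcal{G}_1(\ell/p;z)$ under all of $\Gamma_0(p)$ (Theorem~\ref{thm:mainthm}), then builds the holomorphic object $\mathcal{J}(1/p;z)$ by matching the period integral $T_1$ against a sum of $T_2$'s via a theta identity (Proposition~\ref{propo:theta1id}), so that the non-holomorphic parts cancel \emph{before} any sieve is applied. The $U_{p,m}$ operator is then applied to an already-holomorphic form on $\Gamma_0(p^2)\cap\Gamma_1(p)$, giving the sharp level $\Gamma_1(p)$ directly. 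Your approach recovers the coarser group because the sieve is applied to a Maass form and the level inflation is unavoidable at that stage; the paper's approach front-loads the cancellation and thereby avoids it.
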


Ahlgren and Treneer \cite{Ah-Tr08} strengthened this theorem to include the case
$24d\equiv 1 \pmod{t}$.
\begin{theorem}[Ahlgren and Treneer \cite{Ah-Tr08}]
\mylabel{thm:AhTr}
Suppose $p\ge5$ is prime and that $0\le r < t$. Then
$$
\sum_{n=1}^\infty \Lpar{N\Lpar{r,p,\frac{pn+1}{24}} 
- \frac{1}{t} p\Lpar{\frac{pn+1}{24}}}\,q^n
$$
is a weight $1/2$ weakly holomorphic modular form on $\Gamma_1(576\cdot p^4)$.
\end{theorem}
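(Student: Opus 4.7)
The plan is to produce a single weak Maass form whose holomorphic part, after sieving to the arithmetic progression $24N\equiv 1\pmod p$, yields the target series, and then to show that the non-holomorphic part is annihilated by the same sieve. For each $a$ with $1\le a\le p-1$, Theorem \thm{BrOnD} provides a weak Maass form $D(a/p;z)$ of weight $1/2$ on $\Gamma_1(576\,p^4)$. Since the space of such forms is closed under $\mathbb{C}$-linear combination, the function
$$
H_r(z) := \frac{1}{p}\sum_{a=1}^{p-1} \zeta_p^{-ar}\,D(a/p;z)
$$
is again a weak Maass form of weight $1/2$ on $\Gamma_1(576\,p^4)$. Applying the orthogonality of $p$th roots of unity to \eqn{Rzetaid} with $q\mapsto q^{24p^2}$ identifies its holomorphic part as
$$
\sum_{N\ge 0}\Bigl(N(r,p,N)-\tfrac{1}{p}\,p(N)\Bigr)\,q^{p^2(24N-1)},
$$
in which the subtraction $-\tfrac{1}{p}p(N)$ arises automatically from the omission of the $a=0$ term.

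To isolate the target arithmetic progression, observe that every exponent $p^2(24N-1)$ is already divisible by $p^2$, and the extra factor of $p$ is present exactly when $24N\equiv 1\pmod p$. Three successive applications of the sieving operator $U_p$ therefore extract precisely these terms, and the reindexing $n:=(24N-1)/p$, equivalently $N=(pn+1)/24$, returns a $q$-expansion identical to the target series in the statement. Since $p\mid 576\,p^4$, each $U_p$ preserves the congruence group $\Gamma_1(576\,p^4)$, so the sieved form is still a weak Maass form of weight $1/2$ on this group.

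The main obstacle is showing that the non-holomorphic part
$$
-\frac{1}{p}\sum_{a=1}^{p-1}\zeta_p^{-ar}\,S_1(a/p;z)
$$
is annihilated by $U_p^3$. By the construction in Theorem \thm{BrOnD} this is an Eichler integral whose shadow is a linear combination of the unary theta series $\theta(6m+1,12p,24p^2\tau)$ weighted by $(-1)^m\sin(a\pi(6m+1)/p)$. Their $q$-exponents are squares of integers $\equiv 6m+1\pmod{12p}$, so divisibility by $p^3$ forces $6m+1\equiv\pm 1\pmod p$—precisely the exceptional residue class that lay outside the scope of Theorem \thm{BrOnRh}, where the Legendre-symbol hypothesis $\leg{1-24d}{t}\ne 1$ caused the corresponding theta coefficients to vanish automatically. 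The crux, following Ahlgren and Treneer, is to verify that on this exceptional class the combined theta coefficients produced by the $a$-sum (with weights $\zeta_p^{-ar}$) cancel, so that the Eichler integral contributes no surviving Fourier coefficients after sieving; this is a careful computation exploiting the structure of $p$th roots of unity inside the Mordell-like period integral. Once that cancellation is in hand, the sieved form is holomorphic on the upper half plane with at-worst-polar growth at the cusps, yielding the claimed weakly holomorphic modular form of weight $1/2$ on $\Gamma_1(576\,p^4)$.
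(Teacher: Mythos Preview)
This theorem is quoted from Ahlgren and Treneer \cite{Ah-Tr08}; the present paper does not give its own proof, so there is nothing in the paper to compare your argument against directly. That said, your framework---forming the linear combination $H_r(z)=\tfrac{1}{p}\sum_{a=1}^{p-1}\zeta_p^{-ar}D(a/p;z)$, then applying $U_p^{3}$, then checking that the non-holomorphic part dies---is the natural route and is essentially the strategy of \cite{Ah-Tr08}.

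However, your analysis of the non-holomorphic part contains a concrete error. The shadow of $D(a/p;z)$ is built from $\Theta(a/p;24p^2\tau)$, whose $q$-exponents are $p^2n^2$ with $n\equiv 6m+1\pmod{12p}$. After $U_p^{3}$ the surviving terms are those with $p^3\mid p^2n^2$, i.e.\ $p\mid n$, which forces $6m+1\equiv 0\pmod p$, \emph{not} $6m+1\equiv\pm1\pmod p$ as you wrote. More importantly, the vanishing you need is then immediate and does \emph{not} require the $a$-sum or any ``careful computation exploiting the structure of $p$th roots of unity'': if $p\mid(6m+1)$ then $\sin\!\bigl(a\pi(6m+1)/p\bigr)=\sin(a\pi\cdot\text{integer})=0$, so the corresponding theta coefficient is already zero for each individual $a$. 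Thus each $D(a/p;z)\mid U_p^{3}$ is weakly holomorphic, and the linear combination plays no role in killing the non-holomorphic part (it is needed only to produce the coefficients $N(r,p,\cdot)-\tfrac{1}{p}p(\cdot)$ in the holomorphic part). Once you correct this, the proposal goes through.
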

\begin{remark}
Ahlgren and Trenneer \cite{Ah-Tr08} also derived many analogous results when $t$ is not
prime.
\end{remark}

In this paper we strengthen many of the results of Ahlgren, Bringmann, Ono, Rhoades and Trenneer
\cite{Ah-Tr08}, \cite{Br-On10}, \cite{Br-On-Rh}.
In particular
\begin{enumerate}
\item[(i)] We make Theorem \thm{BOvecMF} (Bringmann and Ono \cite[Theorem 3.4]{Br-On10}
more explicit. See Theorem \thm{Gtrans} below. In this theorem we have corrected
Bringmann and Ono's definition of the functions
$\mathcal{G}_2\Lpar{\frac{a}{c};z}$ and $\mathcal{G}_2(a,b,c;z)$.
\item[(ii)] For the case $p>3$ prime we strengthen 
Theorem \thm{BrOnD} (Bringmann and Ono \cite[Theorem 1.2]{Br-On10}).  We show that the group
$\Gamma_1(576\cdot p^4)$ can be enlarged to  $\Gamma_0(p^2) \cap \Gamma_1(p)$ but
with a simple multiplier. See Corollary \corol{Ftrans}. We prove more.
In Theorem \thm{mainthm} we basically determine the image
of Bringmann and Ono's function
$D\left(\frac{a}{p};\frac{z}{24p^2}\right)$ under the group $\Gamma_0(p)$.
\item[(iii)] 
We strengthen Theorem \thm{BrOnRh}(i) 
(Bringmann, Ono and Rhoades \cite[Theorem 1.1(i)]{Br-On-Rh}), 
and Theorem \thm{AhTr} (Ahlgren and Treneer \cite[Theorem 1.6, p.271]{Ah-Tr08}).
In both cases 
we enlarge the group $\Gamma_1(576\cdot p^4)$ to the group  $\Gamma_1(p)$ 
but with a simple multiplier. See Corollary \corol{Rpm} below.
\item[(iv)] We take a different approach to Bringmann, Ono and Rhoades 
Theorem \thm{BrOnRh}(ii). We handle the residue classes $\leg{1-24d}{t}=1$
in a different way. For these residue classes we show the definition of the 
functions involved may be adjusted to make them holomorphic modular forms.
In particular see equation \eqn{Kpm2prop}, Theorem \thm{Kpthm} and its
Corollary \corol{Rpm}.
\end{enumerate}

The paper is organized as follows. In Section \sect{prelim} we go over
Bringmann and Ono's transformation results for various Lambert series and Mordell
integrals. We make all results explicit and correct some errors. In Section \sect{VMFS}
we describe Bringmann and Ono's vector-valued Maass form of weight $\tfrac{1}{2}$,
correcting some definitions and making all results explicit. In Section \sect{MFM}
we derive a Maass form multiplier for Bringmann and Ono's function 
$\mathcal{G}_1\Lpar{\frac{\ell}{p};z}$ for the group $\Gamma_0(p)$ when $p > 3$
is prime. In Section \sect{EXTRAM} we prove Theorem \thm{mainJp0}
which extends Ramanujan's Dyson rank identity
\eqn{Ramid5} or \eqn{Ramid5V2} to all primes $p > 3$.
In Section \sect{DRC}
we give a new proof of the Dyson Rank Conjecture \eqn{Dysonconj5}--\eqn{Dysonconj7}.
We extend Ahlgren, Bringmann, Ono, Rhoades and Treneer's results, mentioned in  (ii)--(iv)
above, for all primes $p >3$.
We also use our method to derive Atkin and Hussain's \cite{At-Hu} results for the
rank mod $11$ and O'Brien's \cite{OB} results for rank mod $13$.
In Section \sect{conclusion} we mention the recent results of
Hickerson and Mortenson \cite{Hi-Mo16}, who have a different approach and method
of proof of Bringmann and Ono's work.

\section{Preliminaries}
\mylabel{sec:prelim}

Following \cite{Br-On10} we define a number of functions.
Suppose $0 < a < c$ are integers, and assume throughout that $q:=\exp(2\pi iz)$.
We define
\begin{align*}
\Mac{a}{c}{z} &:= \frac{1}{(q;q)_\infty} \sum_{n=-\infty}^\infty 
                  \frac{(-1)^n q^{n+\frac{a}{c}}}
                       {1-q^{n+\frac{a}{c}}} \, q^{\frac{3}{2}n(n+1)}
\\
\Nac{a}{c}{z} &:= \frac{1}{(q;q)_\infty} \Lpar{1 + \sum_{n=1}^\infty 
                  \frac{(-1)^n (1 + q^n)\left(2 - 2\cos\left(\frac{2\pi a}{c}\right)\right)}
                       {1 - 2\cos\left(\frac{2\pi a}{c}\right)q^n + q^{2n}} 
                        \, q^{\frac{1}{2}n(3n+1)}}.
\end{align*}
For integers $0 \le a < c$, $0 < b < c$ define
$$                   
M(a,b,c;z) := 
\frac{1}{(q;q)_\infty} \sum_{n=-\infty}^\infty 
                  \frac{(-1)^n q^{n+\frac{a}{c}}}
                       {1- \zeta_c^b q^{n+\frac{a}{c}}} \, q^{\frac{3}{2}n(n+1)},
$$                
where $\zeta_c := \exp(2\pi i/c)$. In addition for $\frac{b}{c}\not\in\{0,\frac{1}{6},
\frac{1}{2},\frac{5}{6}\}$ define
$$
k(b,c) :=
\begin{cases}
0 & \mbox{if $0 < \frac{b}{c} < \frac{1}{6}$}, \\
1 & \mbox{if $\frac{1}{6} < \frac{b}{c} < \frac{1}{2}$}, \\
2 & \mbox{if $\frac{1}{2} < \frac{b}{c} < \frac{5}{6}$}, \\
3 & \mbox{if $\frac{5}{6} < \frac{b}{c} < \frac{1}{6}$},  
\end{cases}
$$
and
$$                    
N(a,b,c;z):= 
\frac{1}{(q;q)_\infty} \left(
  \frac{i \zeta_{2c}^{-a} q^{\frac{b}{2c}}}{2\Lpar{1 - \zeta_c^{-a} q^{\frac{b}{c}}}}
  + \sum_{m=1}^\infty K(a,b,c,m;z) q^{\frac{m(3m+1)}{2}}\right),
$$                   
where
\begin{align*}
&K(a,b,c,m;z)\\
&:=
(-1)^m \frac{
\sin\Lpar{\frac{\pi a}{c} - \Lpar{\frac{b}{c} + 2k(b,c)m}\pi z}
+\sin\Lpar{\frac{\pi a}{c} - \Lpar{\frac{b}{c} - 2k(b,c)m}\pi z} q^m}
{1 - 2\cos\Lpar{\frac{2\pi a}{c} - \frac{2\pi bz}{c}} q^m + q^{2m}}.
\end{align*}

We need the following identities.
\beq
-1 + \frac{1}{1-z}\sum_{n=0}^\infty \frac{q^{n^2}}{(zq,z^{-1}q;q)_n}
=
\frac{z}{(q)_\infty} 
\sum_{n=-\infty}^\infty \frac{(-1)^n q^{\frac{3}{2}n(n+1)}}{1-zq^n},
\mylabel{eq:Lamid1}
\eeq
and
\beq
\frac{1}{(q)_\infty} 
\sum_{n=-\infty}^\infty \frac{(-1)^n q^{\frac{3}{2}n(n+1)}}{1-zq^n}
=
\frac{z}{(q)_\infty} 
\sum_{n=-\infty}^\infty \frac{(-1)^n q^{n}}{1-zq^n}
 q^{\frac{3}{2}n(n+1)}.
\mylabel{eq:Lamid2}
\eeq
Equation \eqn{Lamid1} is \cite[Eq.(7.10), p.68]{Ga88a} and 
\eqn{Lamid2} is an easy exercise.
Replacing $q$ by $q^c$ and $z$ by $q^a$ in \eqn{Lamid1}, \eqn{Lamid2} we find
that
\begin{align}
\sum_{n=0}^\infty \frac{q^{cn^2}}{(q^a;a^c)_{n+1} (q^{c-a};q^c)_n}
&= 1 + \frac{q^a}{(q^c;q^c)_\infty} 
\sum_{n=-\infty}^\infty \frac{(-1)^n q^{\frac{3c}{2}n(n+1)}}{1-q^{cn+a}}
\mylabel{eq:Macid}\\
&= 1 + q^a \, \Mac{a}{c}{cz}.
\nonumber
\end{align}
By \eqn{Rzqid2} we have
\beq
R(\zeta_c^a,q) = \Nac{a}{c}{z}.
\mylabel{eq:RNacid}
\eeq

    Extending earlier work of Watson \cite{Wa36}, 
Gordon and McIntosh \cite{Go-McI03}, Bringmann and Ono \cite{Br-On10}
found transformation formula for all these functions in terms of Mordell integrals.
We define the following Mordell integrals
$$
\Jac{a}{c}{\alpha} :=
\int_0^\infty e^{-\frac{3}{2}\alpha x^2} 
\frac{\cosh\Lpar{\Lpar{\frac{3a}{c}-2}\alpha x} +
      \cosh\Lpar{\Lpar{\frac{3a}{c}-1}\alpha x}}
     {\cosh(3\alpha x/2) }\, dx,
$$
and
$$
J(a,b,c;\alpha) :=
\int_{-\infty}^\infty e^{-\frac{3}{2}\alpha x^2 + 3 \alpha x \frac{a}{c}}
\frac{\Lpar{\zeta_c^b e^{-\alpha x} + \zeta_c^{2b} e^{-2\alpha x}}}
{\cosh\Lpar{3\alpha\frac{x}{2} - 3\pi i\frac{b}{c}} }\,dx.
$$
Following \cite{Br-On10} we adjust the definitions of the 
$N$- and $M$-functions so that the transformation formulas
are tidy. We define
\begin{align}
\Nell{a}{c}{z} &:=\csc\Lpar{\frac{a\pi}{c}}\, q^{-\frac{1}{24}}\,\Nac{a}{c}{z},
\mylabel{eq:Ndef}\\
\Mell{a}{c}{z} &:=2 q^{\frac{3a}{2c}\left(1-\frac{a}{c}\right) - \frac{1}{24}} \, \Mac{a}{c}{z},
\mylabel{eq:Mdef}\\
\mathcal{M}(a,b,c;z) &:= 
2 q^{\frac{3a}{2c}\left(1-\frac{a}{c}\right) - \frac{1}{24}} \, M(a,b,c;z), 
\mylabel{eq:Mabcdef}\\
\mathcal{N}(a,b,c;z) &:= 
4 \exp\Lpar{-2\pi i \frac{a}{c} k(b,c) + 3\pi i \frac{b}{c}\left(\frac{2a}{c}-1\right)}
\, \zeta_c^{-b} \, q^{\frac{b}{c}k(b,c) - \frac{3b^2}{2c^2} - \frac{1}{24}}\,
N(a,b,c;z).
\mylabel{eq:Nabcdef}
\end{align}
We now restate Bringmann and Ono's \cite[Theorem 2.3, p.435]{Br-On10} more explicitly.
\begin{theorem}
\mylabel{thm:BrOnThm}
Suppose that $c$ is a positive odd integer, and that $a$ and $b$ are integers
for which $0 \le a < c$ and $0< b < c$.
\begin{enumerate}
\item[(1)]
For $z\in\mathfrak{h}$ we have
\begin{align}
\Nell{a}{c}{z+1} &= \zeta_{24}^{-1}\,\Nell{a}{c}{z},\mylabel{eq:Ntrans1}\\
\mathcal{N}(a,b,c;z+1)&=
\begin{cases}
\zeta_{2c^2}^{3b^2} \, \zeta_{24}^{-1} \,\mathcal{N}(a-b,b,c;z) & \mbox{if $a\ge b$},\\
-\zeta_{2c^2}^{3b^2} \, \zeta_{c}^{-3b}\,\zeta_{24}^{-1}\, \mathcal{N}(a-b+c,b,c;z) & \mbox{otherwise},
\end{cases}\mylabel{eq:Ntrans2}\\
\Mell{a}{c}{z+1} &= \zeta_{2c}^{5a} \, \zeta_{2c^2}^{-3a^2}\,\zeta_{24}^{-1}\,
\mathcal{M}(a,a,c;z),\mylabel{eq:Mtrans1}\\
\mathcal{M}(a,b,c;z+1) &=
\zeta_{2c}^{5a} \, \zeta_{2c^2}^{-3a^2}\,\zeta_{24}^{-1}\,
\begin{cases}
\mathcal{M}(a,a+b,c;z) & \mbox{if $a+b<c$},\\
\Mell{a}{c}{z} & \mbox{if $a+b=c$},\\
\mathcal{M}(a,a+b-c,c;z) & \mbox{otherwise},
\end{cases}
\mylabel{eq:Mtrans2}
\end{align}
where $a$ is assumed to be positive in the first and third formula.
\item[(2)]
For $z\in\mathfrak{h}$ we have
\begin{align}
\fsqiz\,\Nell{a}{c}{\Sz} &= \Mell{a}{c}{z} + 2\sqrt{3}\sqiz\,\Jac{a}{c}{-2\pi iz},\mylabel{eq:Ntrans3}\\
\fsqiz\,\mathcal{N}\Lpar{a,b,c;\Sz}&=\mathcal{M}(a,b,c;z) + \zeta_{2c}^{-5b}\sqrt{3}
\sqiz\,J(a,b,c;-2\pi iz),\mylabel{eq:Ntrans4}\\
\fsqiz\,\Mell{a}{c}{\Sz} &= \Nell{a}{c}{z} - \frac{2\sqrt{3}i}{z}\,\Jac{a}{c}{\frac{2\pi i}{z}},
\mylabel{eq:Mtrans3}\\
\fsqiz\,\mathcal{M}\Lpar{a,b,c;\Sz} &= \mathcal{N}(a,b,c;z) - \zeta_{2c}^{-5b}\,
\frac{\sqrt{3}i}{z}\,
J\Lpar{a,b,c;\frac{2\pi i}{z}},
\mylabel{eq:Mtrans4}
\end{align}
where again $a$ is assumed to be positive in the first and third formula.
\end{enumerate}
\end{theorem}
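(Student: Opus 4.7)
The plan is to derive part (1) by direct manipulation of the series definitions \eqn{Ndef}--\eqn{Nabcdef}, and to recover part (2) by a Watson-style contour-integration argument, carefully tracking phase factors at every stage.

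For part (1), note that $q=e^{2\pi iz}$ is invariant under $z\mapsto z+1$, while fractional powers transform by roots of unity: $q^{-1/24}\mapsto \zeta_{24}^{-1}q^{-1/24}$, $q^{a/c}\mapsto\zeta_c^a q^{a/c}$, and $q^{(3a/2c)(1-a/c)}\mapsto \zeta_{2c}^{3a}\zeta_{2c^2}^{-3a^2}q^{(3a/2c)(1-a/c)}$. Since $\Nac{a}{c}{z}$ is a power series in integer powers of $q$, equation \eqn{Ntrans1} follows immediately from the prefactor change in \eqn{Ndef}. For \eqn{Mtrans1}--\eqn{Mtrans2} I would substitute $z\mapsto z+1$ into \eqn{Mdef}, \eqn{Mabcdef}: each denominator $1-\zeta_c^b q^{n+a/c}$ becomes $1-\zeta_c^{a+b}q^{n+a/c}$, and factoring the $\zeta_c^a$ from the numerator produces $\zeta_c^a M(a,a+b,c;z)$, with the case split $a+b<c$, $=c$, $>c$ dictated by the definition of $M(a,b,c;z)$. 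Multiplying by the prefactor change yields the stated roots of unity. Identity \eqn{Ntrans2} is analogous from \eqn{Nabcdef}: the $q^{b/c}$ in the initial term and the $q^{b/c}$ buried in the sines each shift by $\zeta_c^b$, while the prefactor contributes $\zeta_{2c^2}^{3b^2}\zeta_{24}^{-1}$; the two cases $a\ge b$ and $a<b$ correspond to whether the reduction of $a-b\pmod c$ stays inside or crosses the residue range used to define $k(b,c)$, the crossing producing the extra sign and the factor $\zeta_c^{-3b}$.

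For part (2), I would follow the Watson--Zwegers contour technique adapted in Gordon--McIntosh \cite{Go-McI03} and Bringmann--Ono \cite{Br-On10}. Writing the Lambert sum in $\Mac{a}{c}{z}$ as a contour integral
\beqs
\sum_{n\in\mathbb Z}\frac{(-1)^n q^{n+a/c}}{1-q^{n+a/c}}\,q^{\frac32 n(n+1)}
=\frac{1}{2\pi i}\int_{\mathcal C}\frac{\pi\,\Theta(w;z)}{\sin(\pi w)\bigl(1-e^{2\pi iw}q^{a/c}\bigr)}\,dw,
\eeqs
where $\Theta(w;z)$ is the Jacobi-type theta kernel reproducing the quadratic exponent, the inversion $z\mapsto\Sz$ transforms $\Theta$ by the classical Jacobi identity, producing the factor $\sqiz$ and completing the square in the exponential. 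Deforming $\mathcal C$ to the real axis picks up residues at the poles $w\in\mathbb Z+a/c$, which reassemble termwise into $\Nell{a}{c}{z}$, while the straight-line integral evaluates to $2\sqrt 3\sqiz\,\Jac{a}{c}{-2\pi iz}$, giving \eqn{Ntrans3}. The same argument with the parameter $b$ inserted gives \eqn{Ntrans4}; the factor $\zeta_{2c}^{-5b}$ arises from commuting the $\zeta_c^b$-shift past the normalising prefactor of \eqn{Nabcdef}. The remaining formulas \eqn{Mtrans3}--\eqn{Mtrans4} are then obtained by applying $z\mapsto\Sz$ once more to \eqn{Ntrans3}--\eqn{Ntrans4} and invoking the classical inversion $\Jac{a}{c}{\alpha}\leftrightarrow\Jac{a}{c}{4\pi^2/\alpha}$ for the Mordell integral.

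The main obstacle is the phase bookkeeping, in particular verifying the explicit roots of unity $\zeta_{2c}^{5a}\zeta_{2c^2}^{-3a^2}$ in \eqn{Mtrans1}--\eqn{Mtrans2} and $\zeta_{2c}^{-5b}$ in \eqn{Ntrans4}, \eqn{Mtrans4}. These arise from the interplay between the normalising prefactors chosen in \eqn{Mdef}--\eqn{Nabcdef} and the pure shifts produced by the two substitutions. The computation is elementary but unforgiving, because a single mis-signed exponent propagates through every subsequent identity in \sect{VMFS}; this is precisely where the corrections to Bringmann and Ono's original constants have to be located and made consistent.
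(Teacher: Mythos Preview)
The paper does not supply its own proof of this theorem: it is introduced as a restatement of Bringmann and Ono's \cite[Theorem~2.3, p.~435]{Br-On10}, and the paper immediately moves on to Proposition~\propo{thetatrans} without argument. So there is no ``paper's proof'' to compare against; your outline is in effect a reconstruction of the Bringmann--Ono proof (which in turn follows Watson and Gordon--McIntosh).

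Your plan for part (1) is correct and is exactly how one verifies these translation identities: track the fractional $q$-powers in the prefactors \eqn{Ndef}--\eqn{Nabcdef} under $z\mapsto z+1$ and observe that the integer-power series are unchanged. Your plan for part (2) is the standard Watson contour/Poisson argument and is the route taken in \cite{Br-On10}. One small correction: you do not need a separate ``classical inversion $\Jac{a}{c}{\alpha}\leftrightarrow\Jac{a}{c}{4\pi^2/\alpha}$'' to get \eqn{Mtrans3}--\eqn{Mtrans4} from \eqn{Ntrans3}--\eqn{Ntrans4}. Simply replace $z$ by $-1/z$ in \eqn{Ntrans3} and rearrange; the branch computation $\sqrt{-i(-1/z)}\cdot\sqrt{-iz}=1$ (checked first on $z=it$, $t>0$, then by analytic continuation) gives \eqn{Mtrans3} directly, and likewise \eqn{Mtrans4} from \eqn{Ntrans4}. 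No independent Mordell-integral identity is required.
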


We will write each of the Mordell integrals as a period integral of a theta-function.
Before we can do this we need some results of Shimura \cite{Sh}.
For integers $0\le k < N$ we define
$$
\ttwid(k,N;z) := \sum_{m=-\infty}^\infty (Nm+k) \exp\Lpar{\frac{\pi i z}{N}(Nm+k)^2}.
$$
We note that this corresponds to $\theta(z;k,N,N,P)$ in Shimura's 
notation \cite[Eq.(2.0), p.454]{Sh} (with $n=1$, $\nu=1$, and $P(x)=x$).
For integers $0\le a,b < c$ we define
$$
\Theta_1(a,b,c;z) := \zeta_{c^2}^{3ab}\, \zeta_{2c}^{-a}\,
\sum_{m=0}^{6c-1} (-1)^m \sin\Lpar{\frac{\pi}{3}(2m+1)}
\exp\Lpar{\frac{-2\pi i m a}{c}} \ttwid(2mc-6b+c,12c^2;z),
$$
and
\begin{align*}
\Theta_2(a,b,c;z) &:= \sum_{\ell=0}^{2c-1} \left( (-1)^\ell \, 
\exp\Lpar{\frac{-\pi ib}{c}(6\ell+1)} \, \ttwid(6c\ell + 6a + c,12c^2;z) \right.\\
&\qquad\qquad \left. + (-1)^\ell
\exp\Lpar{\frac{-\pi ib}{c}(6\ell-1)} \, \ttwid(6c\ell + 6a - c,12c^2;z) \right).
\end{align*}
An easy calculation gives
\begin{align}
\Theta_1(a,b,c;z) &=
6c\,\zeta_{c^2}^{3ab}\, \zeta_{2c}^{-a}\,
\sum_{n=-\infty}^\infty (-1)^n \Lpar{\frac{n}{3} + \frac{1}{6} - \frac{b}{c}}\,
\sin\Lpar{\frac{\pi}{3}(2n+1)}\,
\exp\Lpar{\frac{-2\pi i n a}{c}} 
\mylabel{eq:Theta1abcid}\\
&\hskip 2in \times \exp\Lpar{3\pi i z\Lpar{\frac{n}{3} + \frac{1}{6} - \frac{b}{c}}^2}.
\nonumber
\end{align}
We calculate the action of $\SL_2(\mathbb{Z}) $ on each of these theta-functions.
\begin{prop}
\mylabel{propo:thetatrans}
For integers $0\le a,b < c$ and $\tau\in\mathfrak{h}$ we have
\begin{align}
\Theta_1(a,b,c;z+1) &= \zeta_{2c^2}^{-3b^2}\,\zeta_{24}\, \Theta_1(a+b,b,c;z), 
\mylabel{eq:tt1}\\    
(-iz)^{-\frac{3}{2}}\,\Theta_1\Lpar{a,b,c;-\frac{1}{z}}
                       &= -\frac{\sqrt{3}i}{2}\, \Theta_2(a,b,c;z), 
\mylabel{eq:tt2}\\    
\Theta_2(a,b,c;z+1) &= \zeta_{2c^2}^{3a^2} \,\zeta_{24} \, \Theta_2(a,b-a,c;z),
\mylabel{eq:tt3}\\
(-iz)^{-\frac{3}{2}}\,\Theta_2\Lpar{a,b,c;-\frac{1}{z}}
                       &= \frac{2\sqrt{3}i}{3}\, \Theta_1(a,b,c;z).
\mylabel{eq:tt4}
\end{align}
\end{prop}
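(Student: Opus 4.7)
The plan is to derive all four identities by applying Shimura's classical transformation laws for the weight-$3/2$ theta function $\widetilde{\theta}(k,N;z)$ termwise inside the defining sums of $\Theta_1(a,b,c;z)$ and $\Theta_2(a,b,c;z)$, with $N=12c^2$ throughout. The two ingredients are the $T$-formula $\widetilde{\theta}(k,N;z+1)=e^{\pi i k^2/N}\,\widetilde{\theta}(k,N;z)$, and an $S$-formula that writes $(-iz)^{-3/2}\widetilde{\theta}(k,N;-1/z)$ as a linear combination of the $\widetilde{\theta}(\ell,N;z)$ for $\ell$ mod $N$, weighted by $\frac{-i}{\sqrt{N}}e^{-2\pi ik\ell/N}$; both are extracted from \cite{Sh} by specializing $P(x)=x$.

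For the $T$-identities \eqref{eq:tt1} and \eqref{eq:tt3} I would substitute the $T$-formula into every summand. Expanding the square $(2mc-6b+c)^2$ (resp.\ $(6c\ell\pm c+6a)^2$) modulo $24c^2$ splits the resulting phase into a global constant and a piece depending on the summation index. For $\Theta_1$ that index-dependent piece combines with the existing $e^{-2\pi ima/c}$ to give $e^{-2\pi im(a+b)/c}$, effecting the desired shift $a\mapsto a+b$ and leaving the leftover constant $\zeta_{2c^2}^{-3b^2}\zeta_{24}$. The computation for $\Theta_2$ is analogous: here the $\ell$-dependent phase absorbs into the factor $e^{-\pi ib(6\ell\pm1)/c}$, shifting $b\mapsto b-a$, with leftover constant $\zeta_{2c^2}^{3a^2}\zeta_{24}$.

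For the $S$-identities \eqref{eq:tt2} and \eqref{eq:tt4} the strategy is to substitute the $S$-formula termwise, interchange the resulting double sum, and evaluate the inner sum as a finite Gauss sum. In the case of \eqref{eq:tt2}, writing $\sin(\pi(2m+1)/3)$ in exponential form and collecting all $m$-dependence produces, for each fixed $\ell$ modulo $12c^2$, a geometric progression in $m$ whose sum vanishes unless $\ell$ lies in the arithmetic set $\{6c\ell'\pm c+6a:0\le\ell'<2c\}$ that indexes $\Theta_2$; the surviving terms reassemble as $-\tfrac{\sqrt{3}i}{2}\,\Theta_2(a,b,c;z)$. Identity \eqref{eq:tt4} is the mirror image, with the Gauss-sum collapse recovering $\Theta_1$ and producing the constant $\tfrac{2\sqrt{3}i}{3}$. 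A useful consistency check is $S^2=-I$: the product $\bigl(-\tfrac{\sqrt{3}i}{2}\bigr)\bigl(\tfrac{2\sqrt{3}i}{3}\bigr)=1$, together with the fact that the shifts $a\mapsto a+b$ and $b\mapsto b-a$ in \eqref{eq:tt1} and \eqref{eq:tt3} are compatible with the braid relation $(ST)^3=S^2$, pins down all signs.

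The main obstacle is the root-of-unity bookkeeping in the $S$-step: one must track the branch of $(-iz)^{3/2}$, the factor $\sqrt{N}=2c\sqrt{3}$ from Shimura's normalisation, the prefactors $\zeta_{c^2}^{3ab}\zeta_{2c}^{-a}$ built into the definition of $\Theta_1$ (and the analogous $e^{-\pi ib/c}$ built into $\Theta_2$), and the eighth-root of unity in the Gauss-sum constant. Completing the square inside the exponential of the geometric progression and identifying exactly which residues mod $12c^2$ survive is the computational core; once these pieces are assembled, the claimed multipliers $\zeta_{2c^2}^{-3b^2}\zeta_{24}$, $\zeta_{2c^2}^{3a^2}\zeta_{24}$, $-\tfrac{\sqrt{3}i}{2}$, and $\tfrac{2\sqrt{3}i}{3}$ fall out directly.
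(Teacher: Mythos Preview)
Your proposal is correct and follows essentially the same route as the paper: the $T$-identities \eqref{eq:tt1}, \eqref{eq:tt3} are dismissed there as ``an easy calculation'' (exactly your termwise substitution of $\widetilde{\theta}(k,N;z+1)=e^{\pi ik^2/N}\widetilde{\theta}(k,N;z)$), and for \eqref{eq:tt2} the paper inserts Shimura's $S$-transformation, interchanges the double sum, splits the sine into exponentials, evaluates the resulting geometric sum in $m$ to isolate the residues $k\equiv 6a\pm c\pmod{6c}$, and then reparametrizes $k=6c\ell+6a\pm c$ to read off $\Theta_2$---after which \eqref{eq:tt4} is declared immediate. One small caution: in the paper's version of Shimura's formula the Fourier kernel appears as $e^{+2\pi i k\ell/N}$ rather than $e^{-2\pi i k\ell/N}$, and the inner sum is an ordinary geometric sum (value $0$ or $6c$), not a quadratic Gauss sum, so there is no extra eighth root of unity to track.
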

\begin{proof}
Transformations \eqn{tt1}, \eqn{tt3} are an easy calculation.
By \cite[Eq. (2.4), p.454]{Sh} we have
\begin{align*}
&(-iz)^{-3/2}\,\ttwid\Lpar{2mc - 6b + c, 12c^2;-\frac{1}{z}} \\
&\quad = 
(-i)\,(12c^2)^{-1/2}\,
\sum_{k\pmod{12c^2}} \exp\Lpar{\frac{\pi ik}{6c^2}(2mc + c - 6b)}\,
\ttwid(k,12c^2;z)
\end{align*}
Therefore
\begin{align*}
&(-iz)^{-3/2}\,\Theta_1\Lpar{a,b,c;-\frac{1}{z}} \\
&\quad = 
(-i)\,(12c^2)^{-1/2}\, \zeta_{c^2}^{3ab}\, \zeta_{2c}^{-a}\,
\sum_{m\pmod{6c}} (-1)^m \sin\Lpar{\frac{\pi}{3}(2m+1)}
\exp\Lpar{\frac{-2\pi i m a}{c}} \\
&\qquad {\hskip 1in}
\sum_{k\pmod{12c^2}} \exp\Lpar{\frac{\pi ik}{6c^2}(2mc + c - 6b)}\,
\ttwid(k,12c^2;z)\\
&\quad = 
\frac{(-i)}{2c\sqrt{3}} 
\,\zeta_{c^2}^{3ab}\, \zeta_{2c}^{-a}\,
\sum_{k\pmod{12c^2}} \ttwid(k,12c^2;z)\\
&\qquad {\hskip 0.5in}
\sum_{m\pmod{6c}} (-1)^m \exp\Lpar{2\pi i\left\{
\frac{k}{12c^2}(2mc + c - 6b) - m\frac{a}{c}\right\}}\,\\
&\qquad\qquad {\hskip 1in}
\times \frac{1}{2i}\Lpar{\exp\Lpar{\frac{\pi i}{3}(2m+1)} - 
                         \exp\Lpar{-\frac{\pi i}{3}(2m+1)}}\\
&\quad = 
\frac{(-1)}{4c\sqrt{3}} 
\,\zeta_{c^2}^{3ab}\, \zeta_{2c}^{-a}\,
\sum_{k\pmod{12c^2}} \exp\Lpar{\frac{\pi ik}{6c^2}(c-6b)}\,\ttwid(k,12c^2;z)\\
&\qquad {\hskip 1in}
   \left(\exp\Lpar{\frac{\pi i}{3}} 
   \sum_{m\pmod{6c}} \exp\Lpar{\frac{2\pi i}{6c}(k -6a + 5c)m} \right.\\ 
&\qquad \qquad {\hskip 1.5in}
    -
   \left.\exp\Lpar{-\frac{\pi i}{3}} 
   \sum_{m\pmod{6c}} \exp\Lpar{\frac{2\pi i}{6c}(k -6a + c)m} \right) \\
&\quad = \frac{-\sqrt{3}}{2} 
\,\zeta_{c^2}^{3ab}\, \zeta_{2c}^{-a}\,
\left(
         \sum_{
         \substack{k\pmod{12c^2} \\ k\equiv 6a - 5c \pmod{6c}}}
          \exp\Lpar{\frac{\pi i}{3} + \frac{\pi ik}{6c^2}(c-6b)}\,\ttwid(k,12c^2;z)\right.\\
& \quad {\hskip 1in}
\left.
         -
         \sum_{
         \substack{k\pmod{12c^2} \\ k\equiv 6a - c \pmod{6c}}}
          \exp\Lpar{-\frac{\pi i}{3} + \frac{\pi ik}{6c^2}(c-6b)}\,\ttwid(k,12c^2;z)\right).
\end{align*}
In the sum above we let $k = 6c\ell + 6a \pm c$ where $0 \le \ell \le 2c-1$, so
that 
$$
\pi i\Lpar{\frac{k}{6c^2}(c-6b)  \pm\frac{1}{3}} =
\pi i\Lpar{\frac{a}{c^2}(c-6b)  \pm \frac{1}{2} + \ell - \frac{b}{c}(6\ell\pm1)},
$$
and we find
\begin{align*}
&(-iz)^{-3/2}\,\Theta_1\Lpar{a,b,c;-\frac{1}{z}} \\
&\quad = 
\frac{-i\sqrt{3}}{2}\, \sum_{\ell=0}^{2c-1} \left( (-1)^\ell \, 
\exp\Lpar{\frac{-\pi ib}{c}(6\ell+1)} \, \ttwid(6c\ell + 6a + c,12c^2;z) \right.\\
&\qquad\qquad \left. + (-1)^\ell
\exp\Lpar{\frac{-\pi ib}{c}(6\ell-1)} \, \ttwid(6c\ell + 6a - c,12c^2;z) \right).\\
&\quad = 
 -\frac{\sqrt{3}i}{2}\, \Theta_2(a,b,c;z),
\end{align*}
which is transformation \eqn{tt2}.
Transformation \eqn{tt4} follows immediately from \eqn{tt2}.
\end{proof}

In addition we need to define
\beq
\THA{a}{c}{z} := \sum_{n=-\infty}^\infty (-1)^n (6n+1) \sin\Lpar{\frac{\pi a(6n+1)}{c}}
\exp\Lpar{3\pi iz\Lpar{n + \frac{1}{6}}^2}. 
\mylabel{eq:Theta1acdef}
\eeq
This coincides with Bringmann and Ono's
function $\Theta\Lpar{\frac{a}{c};z}$ which is given in \cite[Eq.(1.6), p.423]{Br-On10}.
An easy calculation gives
$$
\THA{a}{c}{z}  = -\frac{i}{2c} \, \Theta_2(0,-a,c;z).
$$
From Proposition \ref{propo:thetatrans} we have
\begin{cor}
\mylabel{cor:THAtrans}
\begin{align}
\THA{a}{c}{\tau+1} &= \zeta_{24}\, \THA{a}{c}{\tau}.
\mylabel{eq:THAtrans1}\\
(-i\tau)^{-\frac{3}{2}}\,\THA{a}{c}{-\frac{1}{\tau}}                          
                       &= \frac{\sqrt{3}}{3c}\, \Theta_1(0,-a,c;\tau).
\mylabel{eq:THAtrans2}
\end{align}
\end{cor}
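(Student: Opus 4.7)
The plan is to reduce both transformations to direct applications of the theta-function transformation formulas \eqref{eq:tt3} and \eqref{eq:tt4} from Proposition \ref{propo:thetatrans}, via the identity
$$\THA{a}{c}{\tau} = -\frac{i}{2c}\,\Theta_2(0,-a,c;\tau)$$
that was observed in the excerpt immediately before the statement of the corollary.

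For the translation \eqref{eq:THAtrans1}, I substitute $\tau\mapsto\tau+1$ in the identity above and apply \eqref{eq:tt3} with first argument $0$ and second argument $-a$. With first argument $0$, the multiplier $\zeta_{2c^2}^{3\cdot 0^2}=1$ collapses and the shift $b\mapsto b-a$ specified by \eqref{eq:tt3} becomes trivial (no shift of the second argument), so
$$\Theta_2(0,-a,c;\tau+1) = \zeta_{24}\,\Theta_2(0,-a,c;\tau).$$
Multiplying both sides by $-i/(2c)$ gives \eqref{eq:THAtrans1}.

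For the inversion \eqref{eq:THAtrans2}, I apply \eqref{eq:tt4} with the same specialization $a\mapsto 0$, $b\mapsto -a$ to obtain
$$(-i\tau)^{-\frac{3}{2}}\,\Theta_2\Lpar{0,-a,c;-\tfrac{1}{\tau}} = \frac{2\sqrt{3}i}{3}\,\Theta_1(0,-a,c;\tau).$$
Multiplying both sides by $-i/(2c)$ and simplifying $-\frac{i}{2c}\cdot\frac{2\sqrt{3}i}{3}=\frac{\sqrt{3}}{3c}$ yields
$$(-i\tau)^{-\frac{3}{2}}\,\THA{a}{c}{-\tfrac{1}{\tau}} = \frac{\sqrt{3}}{3c}\,\Theta_1(0,-a,c;\tau),$$
which is \eqref{eq:THAtrans2}.

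There is essentially no obstacle: all of the substantive work has already been done in Proposition \ref{propo:thetatrans}. The only nuance is that Proposition \ref{propo:thetatrans} is stated for parameter ranges $0\le a,b<c$, whereas we are specializing to $b=-a$; however, $\Theta_1(a,b,c;\tau)$ and $\Theta_2(a,b,c;\tau)$ are manifestly well-defined for any integer $b$ (each is a finite sum of theta-series indexed by $\ell$) and the arguments proving \eqref{eq:tt3}–\eqref{eq:tt4} apply verbatim to arbitrary integer $b$. Equivalently, one could replace $-a$ by $c-a$ throughout, the two indexings differing by a uniform sign that cancels on both sides of each transformation.
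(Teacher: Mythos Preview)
Your proposal is correct and follows exactly the route the paper intends: the paper states the corollary immediately after the identity $\THA{a}{c}{z}=-\frac{i}{2c}\,\Theta_2(0,-a,c;z)$ with the words ``From Proposition~\ref{propo:thetatrans} we have,'' and your argument is precisely the unpacking of that sentence via \eqref{eq:tt3} and \eqref{eq:tt4}. Your remark about the parameter range (using $b=-a$ versus $b=c-a$) is a fair observation and your resolution is sound; the sign incurred by $b\mapsto b+c$ is the same for $\Theta_1(0,b,c;\cdot)$ and $\Theta_2(0,b,c;\cdot)$, so it cancels on both sides of each identity.
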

Next we define
$$
\varepsilon_1\Lpar{\frac{a}{c};z} :=
\begin{cases}
\frac{-2}{\sqiz}\,\exp\Lpar{\frac{3\pi i}{z}\Lpar{\frac{a}{c}-\frac{1}{6}}^2} & 
\mbox{if $0<\frac{a}{c}<\frac{1}{6}$},\\
0 & \mbox{if $\frac{1}{6}<\frac{a}{c}<\frac{5}{6}$},\\
\frac{-2}{\sqiz}\,\exp\Lpar{\frac{3\pi i}{z}\Lpar{\frac{a}{c}-\frac{5}{6}}^2} & 
\mbox{if $\frac{5}{6}<\frac{a}{c}<1$},
\end{cases}
$$
$$
\varepsilon_1(a,b,c;z) :=
\begin{cases}
\frac{\zeta_{2c}^{b}}{\sqiz}\,\exp\Lpar{\frac{3\pi i}{z}\Lpar{\frac{a}{c}-\frac{1}{6}}^2} & 
\mbox{if $0\le\frac{a}{c}<\frac{1}{6}$},\\
0 & \mbox{if $\frac{1}{6}<\frac{a}{c}<\frac{5}{6}$},\\
\frac{\zeta_{2c}^{5b}}{\sqiz}\,\exp\Lpar{\frac{3\pi i}{z}\Lpar{\frac{a}{c}-\frac{5}{6}}^2} & 
\mbox{if $\frac{5}{6}<\frac{a}{c}<1$},
\end{cases}
$$
$$
\varepsilon_2\Lpar{\frac{a}{c};z} :=
\begin{cases}
2\,\exp\Lpar{-3\pi iz\Lpar{\frac{a}{c}-\frac{1}{6}}^2} & 
\mbox{if $0<\frac{a}{c}<\frac{1}{6}$},\\
0 & \mbox{if $\frac{1}{6}<\frac{a}{c}<\frac{5}{6}$},\\
2\,\exp\Lpar{-3\pi iz\Lpar{\frac{a}{c}-\frac{5}{6}}^2} & 
\mbox{if $\frac{5}{6}<\frac{a}{c}<1$},
\end{cases}
$$
and
$$
\varepsilon_2(a,b,c;z) :=
\begin{cases}
2\zeta_{c}^{-2b}\,\exp\Lpar{-3\pi iz\Lpar{\frac{a}{c}-\frac{1}{6}}^2} & 
\mbox{if $0\le\frac{a}{c}<\frac{1}{6}$},\\
0 & \mbox{if $\frac{1}{6}<\frac{a}{c}<\frac{5}{6}$},\\
2\,\exp\Lpar{-3\pi iz\Lpar{\frac{a}{c}-\frac{5}{6}}^2} & 
\mbox{if $\frac{5}{6}<\frac{a}{c}<1$}.
\end{cases}
$$

We are now ready to express each of our Mordell integrals as a period integral of a 
theta-function.
\begin{theorem}
\mylabel{thm:Jacids}
Let $a$, $b$, $c$ be as in Theorem \ref{thm:BrOnThm}. Then for $z\in\mathfrak{h}$ we have
\begin{align}
\frac{2\sqrt{3}}{iz} \, \Jac{a}{c}{\frac{2\pi i}{z}} &= \frac{i}{\sqrt{3}}\,
\int_0^{i\infty} \frac{\THA{a}{c}{\tau}}{\sqrt{-i(\tau + z)}}\,d\tau + 
\varepsilon_1\Lpar{\frac{a}{c};z},
\mylabel{eq:Jint1}
\\
2\,\sqrt{3}\,\sqiz\,\Jac{a}{c}{-2\pi i z} &=
-\frac{i}{3c}\, 
\int_0^{i\infty} \frac{\Theta_1(0,-a,c;\tau)}{\sqrt{-i(\tau + z)}}\,d\tau + 
\varepsilon_2\Lpar{\frac{a}{c};z},
\mylabel{eq:Jint2}\\
\frac{\sqrt{3}}{-2i z}\,\Jabc{a}{b}{c}{\frac{2\pi i}{z}}&=
\frac{1}{6c}\,\int_0^{i\infty} \frac{\Theta_1(a,b,c;\tau)}{\sqrt{-i(\tau + z)}}\,d\tau + 
\varepsilon_1(a,b,c;z),
\mylabel{eq:Jint3}
\\
\zeta_{2c}^{-5b}\,\sqrt{3}\,\sqiz \,\Jabc{a}{b}{c}{-2\pi iz}&=
- \zeta_{2c}^{-5b}\,\frac{i\sqrt{3}}{6c}\,
\int_0^{i\infty} \frac{\Theta_2(a,b,c;\tau)}{\sqrt{-i(\tau + z)}}\,d\tau + 
\varepsilon_2(a,b,c;z).             
\mylabel{eq:Jint4}
\end{align}
\end{theorem}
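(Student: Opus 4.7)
The four identities \eqref{eq:Jint1}--\eqref{eq:Jint4} all have the same structure: a Mordell integral equals an Eichler-type period integral of a theta series, modulo an explicit piecewise elementary correction $\varepsilon_i$. My plan is to establish them by a uniform strategy modelled on Zwegers' treatment of the Mordell integral \cite{Zw-thesis}, together with a reduction using the modular involution.

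\textbf{Main computation.} I would first prove \eqref{eq:Jint1} and \eqref{eq:Jint3} directly; the other two follow by symmetry (see below). Starting from the defining integrals, I would use the partial-fraction expansion of $\operatorname{sech}$ to write $\frac{1}{\cosh(3\alpha x/2)}$ as a series $\sum_n(-1)^n/(x - x_n)$ with $x_n$ running through the imaginary poles of $\operatorname{sech}$. Splitting each hyperbolic cosine in the numerator into exponentials, completing the square in each resulting Gaussian, and evaluating the elementary integrals $\int_\mathbb{R} \frac{e^{-\frac{3}{2}\alpha (x-\mu)^2}}{x-x_0}\,dx$ in terms of (incomplete) complementary error functions, one obtains a series in $n$ whose $n$th term carries Gaussian weight $\exp(3\pi i z (n+\tfrac16 - \tfrac{b}{c})^2)$ (respectively $\exp(3\pi iz(n+\tfrac16)^2)$ in the case of $\THA{a}{c}{\tau}$). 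This is exactly the series expansion of the theta functions defined in \eqref{eq:Theta1abcid} and \eqref{eq:Theta1acdef}. Using the standard representation of the complementary error function as a period integral,
\[
\operatorname{erfc}(\sqrt{6\pi}\,\lambda\sqrt{y})\;=\;|\lambda|\sqrt{6}\int_{0}^{i\infty}\frac{e^{3\pi i\lambda^2\tau}}{\sqrt{-i(\tau+z)}}\,d\tau\qquad(\lambda\neq 0,\;y=\Im z),
\]
absolute convergence allows one to interchange sum and integral and collapse the resulting infinite sum into the single period integral on the right-hand side of each identity.

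\textbf{The boundary terms.} The piecewise definitions of $\varepsilon_1(\tfrac{a}{c};z)$, $\varepsilon_2(\tfrac{a}{c};z)$, $\varepsilon_1(a,b,c;z)$ and $\varepsilon_2(a,b,c;z)$ arise naturally in Step~2. When the shift parameter satisfies $\tfrac{a}{c}\in(0,\tfrac16)$ or $\tfrac{a}{c}\in(\tfrac56,1)$, the contour deformation used to evaluate $\int_\mathbb{R} e^{-\frac{3}{2}\alpha x^2}\cosh(\cdot)\,\operatorname{sech}(3\alpha x/2)\,dx$ sweeps across exactly one pole of $\operatorname{sech}(3\alpha x/2)$; the residue there contributes a non-decaying exponential of the precise form $\exp\!\left(\pm 3\pi i z^{\pm1}(\tfrac{a}{c}-\tfrac16)^2\right)$ appearing in $\varepsilon_1$ and $\varepsilon_2$, while in the interior range $\tfrac16<\tfrac{a}{c}<\tfrac56$ no pole is crossed and $\varepsilon_i=0$.

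\textbf{Reduction via the modular involution.} Identities \eqref{eq:Jint2} and \eqref{eq:Jint4} can be obtained from \eqref{eq:Jint1} and \eqref{eq:Jint3} respectively by applying $z\mapsto -1/z$: the Mordell integral with argument $\tfrac{2\pi i}{z}$ becomes one with argument $-2\pi i z$ after an elementary substitution, while the period integral transforms by the theta modularity \eqref{eq:tt2}, \eqref{eq:tt4}, \eqref{eq:THAtrans2} in Proposition \ref{propo:thetatrans} and Corollary \ref{cor:THAtrans}. Matching boundary terms then reduces to verifying that $\varepsilon_1$ and $\varepsilon_2$ are each other's image under the modular substitution, which is immediate from their definitions.

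\textbf{Main obstacle.} The conceptual steps are standard, but the bookkeeping is genuinely delicate: one must track the signs $(-1)^m$, the trigonometric factors $\sin(\pi(2m+1)/3)$, the twists $\zeta_{2c}^{-a}\zeta_{c^2}^{3ab}$, and the shifts of the summation index so that the series produced by Step~2 matches the $6c$-periodic expressions \eqref{eq:Theta1abcid} term by term. The second delicate point is justifying the contour deformation exactly at the boundary values $\tfrac{a}{c}=\tfrac16,\tfrac56$, where the defining integral has a principal-value character; this is the mechanism by which the $\varepsilon_i$ jump discontinuously, and it must be reconciled with the smooth behaviour of the right-hand sides via a careful limit argument.
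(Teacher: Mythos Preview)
Your overall strategy matches the paper's proof closely: the paper also proves \eqref{eq:Jint1} and \eqref{eq:Jint3} directly via a Mittag-Leffler/partial-fraction expansion combined with the Zwegers identity
\[
\int_{-\infty}^{\infty}\frac{e^{-\pi t x^2}}{x-is}\,dx=\pi i s\int_0^\infty\frac{e^{-\pi u s^2}}{\sqrt{u+t}}\,du,
\]
(which is equivalent to your period-integral representation of $\operatorname{erfc}$), and then deduces \eqref{eq:Jint2}, \eqref{eq:Jint4} from \eqref{eq:Jint1}, \eqref{eq:Jint3} using the theta transformations \eqref{eq:THAtrans2} and \eqref{eq:tt2}, exactly as you propose.

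The one point where your account diverges from what actually happens is the origin of the $\varepsilon$ terms. You attribute them to a contour shift that sweeps across a pole of $\operatorname{sech}$. In the paper's computation (carried out at $z=it$, $t>0$, so $\alpha$ is real) the completion-of-square shift is purely real and no pole is crossed. The genuine obstruction is different: for $0<\tfrac{a}{c}<\tfrac16$ (and symmetrically for $\tfrac56<\tfrac{a}{c}<1$) one of the exponentials in the numerator satisfies $|3\tfrac{a}{c}-2|\cdot 2\pi>3\pi$, so the integrand $g_{11}(x)=\bigl(e^{(3a/c-2)2\pi x}+e^{(3a/c-1)2\pi x}\bigr)/(2\cosh 3\pi x)$ \emph{grows} at infinity and the Mittag-Leffler expansion does not apply. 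The paper fixes this by writing $g_{11}=g_{12}+(\text{explicit elementary term})$, where $g_{12}$ has the same poles and residues as $g_{11}$ but decays at infinity; the elementary term integrates to a Gaussian, which is precisely the $\varepsilon$-contribution. So the case split on $\tfrac{a}{c}$ is forced by decay at infinity, not by residues picked up in a deformation. If you rewrite your ``boundary terms'' paragraph with this mechanism in mind, your outline becomes a faithful sketch of the paper's argument.
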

\begin{remark}
We have corrected the results of Bringmann and Ono \cite[p.441]{Br-On10} by including the
necessary correction factors $\varepsilon_1$ and $\varepsilon_2$.
\end{remark}
\begin{proof}
    First we prove \eqn{Jint1}. Assume $0<a<c$ are integers and $a/c\not\in\{\tfrac{1}{6},\, \tfrac{5}{6}\}$. 
We proceed as in the proof of \cite[Lemma 3.2, pp.436--437]{Br-On10}. 
By analytic continuation we may assume that $z=it$ and $t>0$.
We find that
$$
\Jac{a}{c}{\frac{2\pi}{t}} =
t\,\int_0^\infty e^{-3\pi tx^2} \, f(x)\,dx = t\,\int_{-\infty}^\infty e^{-3\pi tx^2}\,g_{11}(x)\,dx,
$$
where
\begin{align*}
f(z) &= 
\frac{\cosh\Lpar{\Lpar{3\frac{a}{c}-2}2\pi z} + \cosh\Lpar{\Lpar{3\frac{a}{c}-1}2\pi z}}
{\cosh(3\pi z)}, \\
g_{11}(z) &= 
\frac{\exp\Lpar{\Lpar{3\frac{a}{c}-2}2\pi z} + \exp\Lpar{\Lpar{3\frac{a}{c}-1}2\pi z}}
{2\cosh(3\pi z)}. 
\end{align*}
We note that the $f(z)$ has poles at $z = z_n = -i(\tfrac{1}{6} + \tfrac{n}{3})$,
where $n\in\mathbb{Z}$. We find
\begin{center}
\begin{tabular}{c c c c}
pole & residue of $f(z)$ \\
$z_{3n-1} = -i(-\tfrac{1}{6}+n)$  &  $\frac{(-1)^{n} \sin\Lpar{\frac{\pi a}{c}(-6n+1)}}{\pi i \sqrt{3}}$ \\
$z_{3n} = -i(\tfrac{1}{6}+n)$  &  $\frac{(-1)^{n+1} \sin\Lpar{\frac{\pi a}{c}(6n+1)}}{\pi i \sqrt{3}}$ \\
$z_{3n+1} = -i(\tfrac{1}{2}+n)$  &  $0$
\end{tabular}
\end{center}
Applying the Mittag-Leffler Theory \cite[pp.134--135]{Wa-Wh-book}, we have            
\begin{align*}
f(z) &= 2 + 
{\sum_{n\in\mathbb{Z}}}^{*} \left(
\frac{i\,(-1)^n \sin\Lpar{\frac{\pi a}{c}(6n+1)}}{\pi \sqrt{3}} 
\Lpar{ \frac{1}{z + i(n + \tfrac{1}{6})} - \frac{1}{i(n + \tfrac{1}{6})}} \right.\\
&{\hskip1in} + 
\left.
\frac{(-i)\,(-1)^n \sin\Lpar{\frac{\pi a}{c}(-6n+1)}}{\pi \sqrt{3}} 
\Lpar{ \frac{1}{z + i(n - \tfrac{1}{6})} - \frac{1}{i(n - \tfrac{1}{6})}} \right)
\end{align*}
for $z\not\in(\pm\tfrac{1}{6} + \mathbb{Z})$, and assuming 
$\frac{1}{6} < \frac{a}{c} < \frac{5}{6}$.
Here we assume that ${\sum_{n\in\mathbb{Z}}}^{*} = \lim_{N\to\infty} \sum_{n=-N}^N$. 
We note that the convergence is uniform on any compact subset of 
$\mathbb{C} \setminus  -i(\pm\tfrac{1}{6}+\mathbb{Z})$.
We must consider three cases.

\subsubsection*{Case 1.1. $\frac{1}{6} < \frac{a}{c} < \frac{5}{6}$}
We have
\begin{align*}
f(z) &= 2 + 
\frac{(-i)}{\pi \sqrt{3}}
{\sum_{n\in\mathbb{Z}}}^{*} 
(-1)^n \sin\Lpar{\frac{\pi a}{c}(6n+1)} 
\left(
\Lpar{ \frac{1}{z - i(n + \tfrac{1}{6})} + \frac{1}{i(n + \tfrac{1}{6})}} \right.\\
&{\hskip1in} + 
\left.
\Lpar{ \frac{1}{-z - i(n + \tfrac{1}{6})} + \frac{1}{i(n + \tfrac{1}{6})}} \right).
\end{align*}
Thus
\begin{align*}
&\int_0^\infty e^{-3\pi t x^2}\,f(x)\,dx\\
&= \int_0^\infty 2 e^{-3\pi t x^2}\, dx \\
&\qquad - \frac{i}{\pi \sqrt{3}} \int_{-\infty}^\infty e^{-3\pi tx^2}
\Lpar{
{\sum_{n\in\mathbb{Z}}}^{*} (-1)^n 
\sin\Lpar{\frac{\pi a}{c}(6n+1)} 
\Lpar{ \frac{1}{x - i(n + \tfrac{1}{6})} + \frac{1}{i(n + \tfrac{1}{6})}}}\,dx
\end{align*}
By absolute convergence on $\mathbb{R}$ we have
\begin{align*}
&\int_0^\infty e^{-3\pi t x^2}\,f(x)\,dx\\
&= \int_{-\infty}^\infty e^{-3\pi t x^2}\, dx \\
&\qquad - \frac{i}{\pi \sqrt{3}} 
{\sum_{n\in\mathbb{Z}}}^{*} (-1)^n 
\sin\Lpar{\frac{\pi a}{c}(6n+1)} 
\int_{-\infty}^\infty e^{-3\pi tx^2}
\Lpar{
\frac{1}{x - i(n + \tfrac{1}{6})} + \frac{1}{i(n + \tfrac{1}{6})}}\,dx\\
&=- \frac{i}{\pi \sqrt{3}} 
{\sum_{n\in\mathbb{Z}}}^{*} (-1)^n 
\sin\Lpar{\frac{\pi a}{c}(6n+1)} 
\int_{-\infty}^\infty \frac{e^{-3\pi tx^2}}{x - i(n + \tfrac{1}{6})}\,dx,
\end{align*}
since
\beq
{\sum_{n\in\mathbb{Z}}}^{*} (-1)^n 
\frac{\sin\Lpar{\frac{\pi a}{c}(6n+1)}}{i(n+\tfrac{1}{6})} =  \pi \sqrt{3}
\qquad \mbox{for $\frac{1}{6} < \frac{a}{c} < \frac{5}{6}$.}
\mylabel{eq:Fourierid}
\eeq
We leave \eqn{Fourierid} as an exercise for the reader. It can be proved using
\cite[Lemma 1.19, p.19]{Zw-thesis}. 
By using the identity  \cite[Eq.(3.8), p.274]{Zw00} 
\beq
\int_{-\infty}^\infty \frac{e^{-\pi t x^2}}{x - is}\,dx
= \pi i s\, 
\int_0^\infty \frac{e^{-\pi u s^2}}{\sqrt{u+ t}}\,du\qquad
\mbox{for $s\in\mathbb{R}\setminus\{0\}$,}
\mylabel{eq:Zwid}
\eeq
we find that
$$
\Jac{a}{c}{\frac{2\pi}{t}} = \frac{t}{6\sqrt{3}} \Zsum
(-1)^n\, (6n+1) \, 
\sin\Lpar{\frac{\pi a}{c}(6n+1)}                                    
\,
\int_0^\infty \frac{e^{-\pi u (n+\tfrac{1}{6})^2 }}{\sqrt{u + 3t}}\,du.
$$
Letting $u=-3i\tau$ in the integral we find
$$
\Jac{a}{c}{\frac{2\pi}{t}} = \frac{-it}{6} \Zsum
(-1)^n\, (6n+1) \, 
\sin\Lpar{\frac{\pi a}{c}(6n+1)}                                    
\,
\int_0^{i\infty} \frac{e^{3\pi i\tau (n+\tfrac{1}{6})^2 }}{\sqrt{-i(\tau+ it)}}\,d\tau.
$$
Arguing as in the proof of  
\cite[Lemma 3.3]{Zw00} we may interchange summation and integration to obtain
$$
\Jac{a}{c}{\frac{2\pi i}{z}} = -\frac{z}{6} 
\,
\int_0^{i\infty} \Zsum (-1)^n\, (6n+1) \, \sin\Lpar{\frac{\pi a}{c}(6n+1)}\,e^{3\pi i\tau (n+\tfrac{1}{6})^2 }
\,\frac{d\tau}{\sqrt{-i(\tau+ z)}},
$$
when $z=it$, for $t>0$. Equation \eqn{Jint1} follows in this case. 

\subsubsection*{Case 1.2. $0 < \frac{a}{c} < \frac{1}{6}$} 
Observe that in this case
$$
-4\pi x < \Lpar{3 \frac{a}{c} - 2}2\pi x < -3\pi x,\qquad\mbox{(for $x>0$)},
$$
and the Mittag-Leffler Theory does not directly apply. We simply note that
$$
\frac{(1+ e^{2y})}{\cosh y} = 2 e^y,
$$
and find that
$$                     
\int_{-\infty}^\infty e^{-3\pi tx^2} \frac
{\exp\Lpar{\Lpar{3\frac{a}{c}-2}2\pi x}}{\cosh(3\pi x)}
\Lpar{1 + e^{6\pi x}}\,dx 
=
\frac{2}{\sqrt{3t}}\,\exp\Lpar{\frac{\pi}{12 t}\Lpar{6\frac{a}{c}-1}^2}.
$$
Thus we have
\begin{align*}
&\int_0^\infty e^{-3\pi tx^2} \, 
\frac{\cosh\Lpar{\Lpar{3\frac{a}{c}-2}2\pi x} + \cosh\Lpar{\Lpar{3\frac{a}{c}-1}2\pi x}}
{\cosh(3\pi x)} \, dx 
= 
\int_{-\infty}^\infty e^{-3\pi tx^2} \, g_{11}(x) \,dx\\
&=
\frac{1}{\sqrt{3t}}\,\exp\Lpar{\frac{\pi}{12 t}\Lpar{6\frac{a}{c}-1}^2} 
+ 
\int_{-\infty}^\infty e^{-3\pi tx^2} \, g_{12}(x)\,dx,
\end{align*}
where
\begin{align*}
g_{12}(z) &= 
\frac{\exp\Lpar{\Lpar{3\frac{a}{c}-1}2\pi z} - \exp\Lpar{\Lpar{3\frac{a}{c}+1}2\pi z}}
{2\cosh(3\pi z)}.               
\end{align*}
We observe that the function $g_{11}(z)$ from Case 1.1 and the function $g_{12}(z)$ have the same poles and the same residue
at each pole. We note we may apply the Mittag-Leffler Theory to the function $g_{12}(z)$ since
in this case
$$
-2\pi x < \Lpar{3 \frac{a}{c} - 1}2\pi x < -\pi x,\qquad
 2\pi x < \Lpar{3 \frac{a}{c} + 1}2\pi x < 3\pi x,
\qquad\mbox{(for $x>0$)}.
$$
The remainder of the proof is analogous to Case 1.1.

\subsubsection*{Case 1.3. $\frac{5}{6} < \frac{a}{c} < 1$} 
The proof is analogous to Case 1.2.
This time we have 
$$                     
\int_{-\infty}^\infty e^{-3\pi tx^2} \frac
{\exp\Lpar{\Lpar{3\frac{a}{c}-1}2\pi x}}{\cosh(3\pi x)}
\Lpar{1 + e^{-6\pi x}}\,dx 
=
\frac{2}{\sqrt{3t}}\,\exp\Lpar{\frac{\pi}{12 t}\Lpar{6\frac{a}{c}-5}^2}.
$$
Thus we have
\begin{align*}
&\int_0^\infty e^{-3\pi tx^2} \, 
\frac{\cosh\Lpar{\Lpar{3\frac{a}{c}-2}2\pi x} + \cosh\Lpar{\Lpar{3\frac{a}{c}-1}2\pi x}}
{\cosh(3\pi x)} \, dx 
= 
\int_{-\infty}^\infty e^{-3\pi tx^2} \, g_{11}(x) \,dx\\
&=
\frac{1}{\sqrt{3t}}\,\exp\Lpar{\frac{\pi}{12 t}\Lpar{6\frac{a}{c}-5}^2} 
+ 
\int_{-\infty}^\infty e^{-3\pi tx^2} \, g_{13}(x)\,dx,
\end{align*}
where
\begin{align*}
g_{13}(z) &= 
\frac{\exp\Lpar{\Lpar{3\frac{a}{c}-2}2\pi z} - \exp\Lpar{\Lpar{3\frac{a}{c}-4}2\pi z}}
{2\cosh(3\pi z)}.               
\end{align*}
We observe that the function $g_{11}(z)$ from Case 1.1 and the function $g_{13}(z)$ have the same poles and the same residue
at each pole, and we may apply the Mittag-Leffler Theory to the function $g_{13}(z)$ since
in this case
$$
\pi x < \Lpar{3 \frac{a}{c} - 2}2\pi x < 2\pi x,\qquad
 -3\pi x < \Lpar{3 \frac{a}{c} - 4}2\pi x < -2\pi x,
\qquad\mbox{(for $x>0$)}.
$$

Equation \eqn{Jint2} follows easily from equations \eqn{THAtrans2} and \eqn{Jint1}.

The proof of \eqn{Jint3} is analogous to that of \eqn{Jint1}. 
This time we assume $0 \le a < c$, $0 < b < c$  are integers  and $a/c\not\in\{1/6,5/6\}$.
Again by analytic continuation we may assume $z=it$ and $t>0$. 
We find that
$$
\Jabc{a}{b}{c}{\frac{2\pi}{t}} =
 t\,\int_{-\infty}^\infty e^{-3\pi tx^2}\,g_{21}(x)\,dx,
$$
where
$$                 
g_{21}(z) = 
\frac{\left( \zeta_c^b e^{-2\pi z} + \zeta_c^{2b} e^{-4\pi z}\right)}
{\cosh\Lpar{3\pi z - 3\pi i \tfrac{b}{c}}}
\exp\Lpar{6\pi z \tfrac{a}{c}}.
$$
We note that the $g_{21}(z)$ has poles at $z = z_n = -i(\tfrac{1}{6} + \tfrac{n}{3} - \tfrac{b}{c})$,
where $n\in\mathbb{Z}$. We find that
$$
\underset{z=z_n}{\mbox{Res}}\,g_{21}(z) = 
\frac{2(-1)^{n+1}}{3\pi}\, \sin\Lpar{\tfrac{\pi}{3}(2n+1)} \,
\exp\Lpar{\frac{\pi ia}{c^2}(6b -c -2nc)}.
$$
Applying the Mittag-Leffler Theory \cite[pp.134--135]{Wa-Wh-book}, we have            
\begin{align*}
g_{21}(z) &= g_{21}(0) + 
{\sum_{n\in\mathbb{Z}}}^{*} 
\frac{2(-1)^{n+1}}{3\pi}\, \sin\Lpar{\tfrac{\pi}{3}(2n+1)} \,
\exp\Lpar{\frac{\pi ia}{c^2}(6b -c -2nc)}\\
&\hskip2in\times
\Lpar{ \frac{1}{z + i(\tfrac{n}{3} + \tfrac{b}{c} - \tfrac{1}{6})} - 
\frac{1}{i(\tfrac{n}{3} + \tfrac{b}{c} - \tfrac{1}{6})}} 
\end{align*}
for $z\ne z_n$, $n\in\mathbb{Z}$, and assuming 
$\frac{1}{6} < \frac{a}{c} < \frac{5}{6}$.
We note that the convergence is uniform on any compact subset of 
$\mathbb{C} \setminus  \{z_n\,:\,n\in\mathbb{Z}\}$.            
Again we must consider 3 cases.
\subsubsection*{Case 2.1. $\frac{1}{6} < \frac{a}{c} < \frac{5}{6}$}
Proceeding as in Case 1.1, using the analog of \eqn{Fourierid}
\beq
{\sum_{n\in\mathbb{Z}}}^{*} 
\frac{2(-1)^{n+1}}{3\pi}\, \sin\Lpar{\tfrac{\pi}{3}(2n+1)} \,
\exp\Lpar{\frac{\pi ia}{c^2}(6b -c -2nc)} \,
\frac{1}{i(\tfrac{n}{3} + \tfrac{b}{c} - \tfrac{1}{6})} 
= g_{21}(0),
\mylabel{eq:Fourierid2}
\eeq
applying \eqn{Zwid} and using \eqn{Theta1abcid} we find that
\begin{align*}
&\Jabc{a}{b}{c}{\frac{2\pi i}{z}} \\
&= -\frac{2iz}{\sqrt{3}} \,\exp\Lpar{\frac{6\pi i ab}{c^2} - \frac{\pi i a}{c}}\\
&\quad \times                     
\int_0^{i\infty} \Zsum (-1)^n\, \Lpar{\tfrac{n}{3} + \tfrac{1}{6} - \tfrac{b}{c}} \, 
\sin\Lpar{\frac{\pi}{3}(2n+1)}\,\exp\Lpar{-2\pi in\frac{a}{c}} \, 
e^{3\pi i\tau (\tfrac{n}{3}+\tfrac{1}{6} -\tfrac{b}{c})^2 }
\,\frac{d\tau}{\sqrt{-i(\tau+ z)}},\\
&=-\frac{2iz}{6\sqrt{3}c}\,\int_0^{i\infty} \frac{\Theta_1(a,b,c;\tau)}{\sqrt{-i(\tau + z)}}\,d\tau ,
\end{align*}
which gives \eqn{Jint3} for this case.

\subsubsection*{Case 2.2. $0 < \frac{a}{c} < \frac{1}{6}$}
We proceed as in Case 1.2. This time we need 
$$
\int_{-\infty}^\infty \zeta_c^{2b} e^{-3\pi tx^2 + 6\pi x \tfrac{a}{c} - 4\pi x}
\frac{(1 + \exp\Lpar{6\pi x - 6\pi i \tfrac{b}{c}})}
{\cosh\Lpar{3\pi x - 3 \pi i \tfrac{b}{c}}}\,dx
=
\frac{2}{\sqrt{3t}} \, \zeta_{2c}^b\,
\exp\Lpar{\frac{\pi}{12t}\Lpar{6\tfrac{a}{c}-1}^2}.
$$
We have
$$
\Jabc{a}{b}{c}{\frac{2\pi}{t}}
= 
\frac{2}{\sqrt{3}} \, \sqrt{t}\, \zeta_{2c}^b\,
\exp\Lpar{\frac{\pi}{12t}\Lpar{6\tfrac{a}{c}-1}^2}
+ t\,\int_{-\infty}^\infty e^{-3\pi t x^2}\, g_{22}(x)\,dx,
$$
where
$$
g_{22}(z) = 
\Lpar{ \frac{ \zeta_c^b e^{-2\pi z} - \zeta_c^{-b} e^{2\pi z}}
{\cosh\Lpar{3\pi z - 3 \pi i \tfrac{b}{c}}}}\,e^{6\pi z \tfrac{a}{c}}.
$$
We observe that the function $g_{21}(z)$ from Case 2.1 and the function $g_{22}(z)$ have the same poles and the 
same residue at each pole. The result follows.

\subsubsection*{Case 2.3. $\frac{5}{6} < \frac{a}{c} < 1$} 
We proceed as in Case 2.2. This time we need 
$$
\int_{-\infty}^\infty \zeta_c^{b} e^{-3\pi tx^2 + 6\pi x \tfrac{a}{c} - 2\pi x}
\frac{(1 + \exp\Lpar{-6\pi x + 6\pi i \tfrac{b}{c}})}
{\cosh\Lpar{3\pi x - 3 \pi i \tfrac{b}{c}}}\,dx
=
\frac{2}{\sqrt{3t}} \, \zeta_{2c}^{5b}\,
\exp\Lpar{\frac{\pi}{12t}\Lpar{6\tfrac{a}{c}-5}^2}.
$$
We have
$$
\Jabc{a}{b}{c}{\frac{2\pi}{t}}
= 
\frac{2}{\sqrt{3}} \, \sqrt{t}\, \zeta_{2c}^{5b}\,
\exp\Lpar{\frac{\pi}{12t}\Lpar{6\tfrac{a}{c}-5}^2}
+ t\,\int_{-\infty}^\infty e^{-3\pi t x^2}\, g_{23}(x)\,dx,
$$
where
$$
g_{23}(z) = 
\Lpar{ \frac{ -\zeta_c^{4b} e^{-8\pi z} + \zeta_c^{2b} e^{-4\pi z}}
{\cosh\Lpar{3\pi z - 3 \pi i \tfrac{b}{c}}}}\,e^{6\pi z \tfrac{a}{c}}.
$$
We observe that the function $g_{21}(z)$ from Case 2.1 and the function $g_{23}(z)$ have the same poles and the 
same residue at each pole. The result follows.

Finally \eqn{Jint4} follows easily from \eqn{tt2} and \eqn{Jint3}.

\end{proof}

\section{Vector valued Maass forms of weight $1/2$}
\mylabel{sec:VMFS}
We describe Bringmann and Ono's vector valued Maass forms of weight $\tfrac{1}{2}$
making all functions and transformations explicit.
Suppose $0\le a < c$ and $0< b < c$ are integers where $(c,6)=1$.
We define
\begin{align}
T_1\Lpar{\frac{a}{c};z} &:= -\frac{i}{\sqrt{3}}\int_{-\zcon}^{i\infty}
\frac{\THA{a}{c}{\tau}}{\sqrt{-i(\tau + z)}}\,d\tau, 
\mylabel{eq:T1def}
\\
T_2\Lpar{\frac{a}{c};z} &:= 
\frac{i}{3c}\, 
\int_{-\zcon}^{i\infty} \frac{\Theta_1(0,-a,c;\tau)}{\sqrt{-i(\tau + z)}}\,d\tau,
\mylabel{eq:T2def}
\\
T_1(a,b,c,;z) 
&:=
\frac{\zeta_{2c}^{-5b}}{3c}\,
\int_{-\zcon}^{i\infty} \frac{\Theta_1(a,b,c;\tau)}{\sqrt{-i(\tau + z)}}\,d\tau,
\mylabel{eq:T1abcdef}
\\
T_2(a,b,c;z) 
&:=
\zeta_{2c}^{-5b}\,\frac{i\sqrt{3}}{6c}\,
\int_{-\zcon}^{i\infty} \frac{\Theta_2(a,b,c;\tau)}{\sqrt{-i(\tau + z)}}\,d\tau 
\mylabel{eq:T2abcdef}
\end{align}
We can now define a family of 
vector valued Maass forms of weight $\tfrac{1}{2}$.
\begin{align}
\mathcal{G}_1\Lpar{\frac{a}{c};z} &:= \Nell{a}{c}{z} - T_1\Lpar{\frac{a}{c};z},
\mylabel{eq:G1acdef}\\
\mathcal{G}_2\Lpar{\frac{a}{c};z} &:= \Mell{a}{c}{z} + \varepsilon_2\Lpar{\frac{a}{c};z}
- T_2\Lpar{\frac{a}{c};z},
\mylabel{eq:G2acdef}\\
\mathcal{G}_1(a,b,c;z) &:= \mathcal{N}(a,b,c,z) - T_1(a,b,c;z),
\mylabel{eq:G1abcdef}\\
\mathcal{G}_2(a,b,c;z) &:= \mathcal{M}(a,b,c,z) + \varepsilon_2(a,b,c;z) 
- T_2(a,b,c;z).
\mylabel{eq:G2abcdef}
\end{align}
We have corrected the definitions of 
$\mathcal{G}_2\Lpar{\frac{a}{c};z}$, and $\mathcal{G}_2(a,b,c;z)$ given on \cite[p.440]{Br-On10}. 
\begin{theorem}
\mylabel{thm:Gtrans} 
Suppose $0 \le a < c$ and $0< b < c$ are integers and assume $(c,6)=1$.
\begin{enumerate}
\item[(1)]
For $z\in\mathfrak{h}$ we have
\begin{align}
\mathcal{G}_1\Lpar{\frac{a}{c};z+1} &= \zeta_{24}^{-1}\,\mathcal{G}_1\Lpar{\frac{a}{c};z}
\mylabel{eq:G1trans1}\\
\mathcal{G}_2\Lpar{\frac{a}{c};z+1} &= \zeta_{2c}^{5a} \, \zeta_{2c^2}^{-3a^2}\,\zeta_{24}^{-1}\,
\mathcal{G}_2(a,a,c;z),
\mylabel{eq:G2trans1}\\
\mathcal{G}_1(a,b,c;z+1)&=
\begin{cases}
\zeta_{2c^2}^{3b^2} \, \zeta_{24}^{-1} \,\mathcal{G}_1(a-b,b,c;z) & \mbox{if $a\ge b$}, \\
-\zeta_{2c^2}^{3b^2} \, \zeta_{c}^{-3b}\,\zeta_{24}^{-1}\, \mathcal{G}_1(a-b+c,b,c;z) & \mbox{otherwise},
\end{cases}
\mylabel{eq:GG1trans1}\\
\mathcal{G}_2(a,b,c;z+1) &=
\zeta_{2c}^{5a} \, \zeta_{2c^2}^{-3a^2}\,\zeta_{24}^{-1}\,
\begin{cases}
\mathcal{G}_2(a,a+b,c;z) & \mbox{if $a+b<c$},\\
\mathcal{G}_2\Lpar{\frac{a}{c};z} & \mbox{if $a+b=c$},\\
\mathcal{G}_2(a,a+b-c,c;z) & \mbox{otherwise},
\end{cases}
\mylabel{eq:GG2trans1}
\end{align}
where $a$ is assumed to be positive in the first and second formula.
\item[(2)]
For $z\in\mathfrak{h}$ we have
\begin{align}
\fsqiz\,\mathcal{G}_1\Lpar{\frac{a}{c};\Sz} &= \mathcal{G}_2\Lpar{\frac{a}{c};z},\mylabel{eq:G1trans2}\\
\fsqiz\,\mathcal{G}_2\Lpar{\frac{a}{c};\Sz} &= \mathcal{G}_1\Lpar{\frac{a}{c};z},\mylabel{eq:G2trans2}\\
\fsqiz\,\mathcal{G}_1\Lpar{a,b,c;\Sz}&=\mathcal{G}_2(a,b,c;z),\mylabel{eq:GG1trans2}\\
\fsqiz\,\mathcal{G}_2\Lpar{a,b,c;\Sz} &= \mathcal{G}_1(a,b,c;z),
\mylabel{eq:GG2trans2}
\end{align}
where again $a$ is assumed to be positive in the first and second formula.
\end{enumerate}
\end{theorem}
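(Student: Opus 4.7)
The plan is to derive both parts of Theorem \thm{Gtrans} by combining the classical transformations of Theorem \thm{BrOnThm} with the period-integral representations of the Mordell integrals from Theorem \thm{Jacids} and the theta transformations of Proposition \propo{thetatrans}. The two parts have quite different flavors: part (1) is essentially ``periodicity bookkeeping,'' while part (2) is the genuinely mock-modular content.

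For part (1), I would argue as follows. The function $\mathcal{G}_1(a/c;z)$ differs from $\mathcal{N}(a/c;z)$ only by the period integral $T_1(a/c;z)$, so I need to check that $T_1(a/c;z+1)=\zeta_{24}^{-1}\,T_1(a/c;z)$ and match it with \eqn{Ntrans1}. Substituting $\tau\mapsto\tau-1$ in the defining integral \eqn{T1def} shifts the lower limit from $-\overline{z+1}$ back to $-\zcon$ and replaces the integrand by $\Theta_1(a/c;\tau-1)$, which by \eqn{THAtrans1} contributes exactly the factor $\zeta_{24}^{-1}$; combined with \eqn{Ntrans1} this gives \eqn{G1trans1}. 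The same substitution strategy, using \eqn{tt1}, \eqn{tt3}, \eqn{THAtrans1} and their analogues for $\Theta_1(0,-a,c;\cdot)$, yields the transformation of $T_2$, $T_1(a,b,c;\cdot)$, $T_2(a,b,c;\cdot)$ with multipliers matching those in \eqn{Ntrans2}, \eqn{Mtrans1}, \eqn{Mtrans2}; one only has to verify separately that $\varepsilon_2(a/c;z+1)$ and $\varepsilon_2(a,b,c;z+1)$ carry the same multiplier by a direct check on the explicit exponentials.

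For part (2), the $S$-transformation, I would start from the identities \eqn{Ntrans3}–\eqn{Mtrans4} and substitute the period-integral expressions \eqn{Jint1}–\eqn{Jint4} for the Mordell integrals. For instance, for \eqn{G1trans2} one gets
\[
\tfrac{1}{\sqrt{-iz}}\,\mathcal{G}_1\!\left(\tfrac{a}{c};-\tfrac{1}{z}\right)
=\mathcal{M}\!\left(\tfrac{a}{c};z\right)+\varepsilon_2\!\left(\tfrac{a}{c};z\right)
-\tfrac{i}{3c}\int_0^{i\infty}\tfrac{\Theta_1(0,-a,c;\tau)}{\sqrt{-i(\tau+z)}}\,d\tau
-\tfrac{1}{\sqrt{-iz}}\,T_1\!\left(\tfrac{a}{c};-\tfrac{1}{z}\right),
\]
so it suffices to show that the last two terms collapse to $-T_2(a/c;z)=-\tfrac{i}{3c}\int_{-\zcon}^{i\infty}$. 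Writing $\int_0^{i\infty}=\int_0^{-\zcon}+\int_{-\zcon}^{i\infty}$, the $\int_{-\zcon}^{i\infty}$ piece cancels against $-T_2$, and the remaining $\int_0^{-\zcon}$ piece should equal $-\tfrac{1}{\sqrt{-iz}}\,T_1(a/c;-1/z)$. To verify this, I would change variable $\tau\mapsto -1/\tau$ in the definition of $T_1(a/c;-1/z)$; the interval $[-\overline{-1/z},i\infty)$ becomes $(0,-\zcon]$, the factor $d\tau/\tau^2$ combines with the $\sqrt{-i(\tau+z)}$ denominator via the standard half-integer weight manipulation, and applying \eqn{THAtrans2} (or rather its inverse) turns $\THA{a}{c}{\cdot}$ into $\Theta_1(0,-a,c;\cdot)$ with the correct numerical constant. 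The other three transformations \eqn{G2trans2}, \eqn{GG1trans2}, \eqn{GG2trans2} follow the same template, using \eqn{tt2}, \eqn{tt4} in place of \eqn{THAtrans2}.

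The main obstacle will be the careful bookkeeping of multipliers, square-root branches, and especially the $\varepsilon_2$ correction terms, which vanish in the ``generic'' range $1/6<a/c<5/6$ but contribute nonzero boundary pieces when $a/c$ lies in $(0,1/6)$ or $(5/6,1)$. These are precisely the corrections Bringmann and Ono omitted; I will need to track them through every case of \thm{Jacids} and verify that the constants match on both sides of \eqn{G1trans2}–\eqn{GG2trans2}. A secondary nuisance is the case split in \eqn{G2trans1}, \eqn{GG1trans1}, \eqn{GG2trans1}, which reflects whether the parameters $a-b$, $a+b$ lie inside $[0,c)$; this is handled by reducing modulo $c$ inside the theta functions and tracking the resulting $\zeta_{c^2}^{3b^2}$ and $\zeta_{c}^{-3b}$ factors coming from \eqn{tt1} and \eqn{tt3}.
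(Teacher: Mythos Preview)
Your approach is exactly the paper's: part (1) via the substitution $\tau\mapsto\tau-1$ in the period integrals combined with \eqn{THAtrans1}, \eqn{tt1}, \eqn{tt3}, and part (2) via the change of variable $\tau\mapsto-1/\tau$ in $T_1(a/c;-1/z)$ together with \eqn{Ntrans3}, \eqn{Jint2}, \eqn{THAtrans2}, then telescoping the $\int_0^{i\infty}$ and $\int_0^{-\zcon}$ pieces into $\int_{-\zcon}^{i\infty}$. One harmless sign slip in your exposition: with the coefficient $-\tfrac{i}{3c}$ already attached, the remaining $\int_0^{-\zcon}$ piece should equal $+\tfrac{1}{\sqrt{-iz}}T_1(a/c;-1/z)$, not minus, so that it cancels the last term of your displayed equation.
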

\begin{proof}
From \eqn{THAtrans1} we have
\begin{align*}
T_1\Lpar{\frac{a}{c};z+1} &= -\frac{i}{\sqrt{3}}\int_{-\zcon-1}^{i\infty}
\frac{\THA{a}{c}{\tau}}{\sqrt{-i(\tau + z+1)}}\,d\tau 
                          = -\frac{i}{\sqrt{3}}\int_{-\zcon}^{i\infty}
\frac{\THA{a}{c}{\tau-1}}{\sqrt{-i(\tau + z)}}\,d\tau \\
                          &= -\frac{i}{\sqrt{3}}\int_{-\zcon}^{i\infty}
\frac{\zeta_{24}^{-1}\THA{a}{c}{\tau}}{\sqrt{-i(\tau + z)}}\,d\tau 
= \zeta_{24}^{-1} \, T_1\Lpar{\frac{a}{c};z+1}.
\end{align*}
Hence by \eqn{Ntrans1}, \eqn{G1acdef} we have \eqn{G1trans1}.
The proofs of \eqn{G2trans1}--\eqn{GG2trans1} are similar.

We now prove \eqn{G1trans2}.
$$
\fsqiz\,\mathcal{G}_1\Lpar{\frac{a}{c};\Sz} = 
\fsqiz\, \Nell{a}{c}{\Sz}  + \frac{i}{\sqrt{3}\sqrt{-iz}}\,
\int_{-(\overline{-1/z})}^{i\infty}
\frac{\THA{a}{c}{\tau}}{\sqrt{-i(\tau - 1/z)}}\,d\tau.             
$$
$$                     
\int_{-(\overline{-1/z})}^{i\infty}
\frac{\THA{a}{c}{\tau}}{\sqrt{-i(\tau - 1/z)}}\,d\tau 
=
- \int_{0}^{-\zcon} 
\frac{\THA{a}{c}{-1/\tau}}{\sqrt{-i(-1/\tau - 1/z)}}\,\frac{d\tau }{\tau^2}
=
\sqrt{-iz}\,\int_{0}^{-\zcon} 
\frac{(-i\tau)^{-3/2}\,\THA{a}{c}{-1/\tau}}{\sqrt{-i(\tau + z)}}\,d\tau.
$$                
Thus by \eqn{Ntrans3}, \eqn{Jint2}, \eqn{THAtrans2} we have
\begin{align*}
&\fsqiz\,\mathcal{G}_1\Lpar{\frac{a}{c};\Sz} \\
&= 
\Mell{a}{c}{z} + 2\sqrt{3}\sqiz\,\Jac{a}{c}{-2\pi iz} +                            
\frac{i}{\sqrt{3}}\,\int_{0}^{-\zcon} 
\frac{(-i\tau)^{-3/2}\,\THA{a}{c}{-1/\tau}}{\sqrt{-i(\tau + z)}}\,d\tau\\
&= 
\Mell{a}{c}{z} + \varepsilon_2\Lpar{\frac{a}{c};z}
-\frac{i}{3c}\, 
\int_0^{i\infty} \frac{\Theta_1(0,-a,c;\tau)}{\sqrt{-i(\tau + z)}}\,d\tau + 
\frac{i}{3c}\,
\int_0^{-\zcon} \frac{\Theta_1(0,-a,c;\tau)}{\sqrt{-i(\tau + z)}}\,d\tau\\
&= 
\Mell{a}{c}{z} + \varepsilon_2\Lpar{\frac{a}{c};z}
-\frac{i}{3c}\, 
\int_{-\zcon}^{i\infty} \frac{\Theta_1(0,-a,c;\tau)}{\sqrt{-i(\tau + z)}}\,d\tau\\
&= 
\mathcal{G}_2\Lpar{\frac{a}{c};z},
\end{align*}
and we have \eqn{G1trans2}. Equation \eqn{G2trans2} follows immediately from \eqn{G1trans2}.
The proofs of \eqn{GG1trans2}--\eqn{GG2trans2} are analogous.
\end{proof}
\begin{cor}
\mylabel{cor:Vc}
Suppose $c$ is a fixed positive integer relatively prime to $6$. Then
\begin{align*}
\mathfrak{V}_c&:= \left\{\mathcal{G}_1\Lpar{\frac{a}{c};z},\,
             \mathcal{G}_2\Lpar{\frac{a}{c};z}\,:\, 0< a < c\right\}\\
&\qquad \cup \left\{\mathcal{G}_1(a,b,c;z),\, \mathcal{G}_2(a,b,c;z)\,:\,
0\le a < c\quad\mbox{and}\quad 0<b<c\right\}
\end{align*}
is a vector valued Maass form of weight $\tfrac{1}{2}$ for the full modular
group $\SL_2(\mathbb{Z})$.
\end{cor}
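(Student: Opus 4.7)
The plan is to reduce the statement to Theorem \thm{Gtrans} by exploiting the fact that $\SL_2(\mathbb{Z})$ is generated by $S=\SMat$ and $T=\TMat$. It therefore suffices to check that (a) each element of $\mathfrak{V}_c$ is real-analytic on $\mathfrak{h}$ with moderate growth at the cusps and is annihilated by the weight-$\tfrac12$ hyperbolic Laplacian, and (b) the $\mathbb{C}$-span of $\mathfrak{V}_c$ is stable, up to the weight-$\tfrac12$ automorphy factor, under both $S$ and $T$.

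For the analytic content of (a) I would argue as follows. Each of $\Nell{a}{c}{z}$, $\Mell{a}{c}{z}$, $\mathcal{N}(a,b,c;z)$, $\mathcal{M}(a,b,c;z)$ is holomorphic on $\mathfrak{h}$ by its Lambert-series definition, and $\varepsilon_2(a/c;z)$ and $\varepsilon_2(a,b,c;z)$ are visibly holomorphic. Each of the period integrals $T_1(a/c;z),T_2(a/c;z),T_1(a,b,c;z),T_2(a,b,c;z)$ is an Eichler-style integral $\int_{-\zcon}^{i\infty} \Theta(\tau)/\sqrt{-i(\tau+z)}\,d\tau$ of a weight-$\tfrac32$ vector-valued theta function built out of the $\ttwid(k,N;\tau)$ series. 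A standard Zwegers-type calculation (differentiating under the integral) shows that such a non-holomorphic Eichler integral is annihilated by the weight-$\tfrac12$ Laplacian, and its holomorphic companion is killed outright; hence every element of $\mathfrak{V}_c$ is a weak Maass form of weight $\tfrac12$. The required moderate growth at the cusps is immediate from the $q$-series representations of the holomorphic parts together with the rapid decay of the theta kernels.

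For (b) I would just read off the four $T$-transformations (G1trans1)--(GG2trans1) and the four $S$-transformations (G1trans2)--(GG2trans2) of Theorem \thm{Gtrans}. Under $z\mapsto z+1$ each member of $\mathfrak{V}_c$ becomes a root-of-unity multiple of another member: for $\mathcal{G}_1(a,b,c;\cdot)$ the index $a$ is replaced by $a-b$ or $a-b+c$, both still in $\{0,\dots,c-1\}$; for $\mathcal{G}_2(a,b,c;\cdot)$ the index $b$ is replaced by $a+b$ (if $a+b<c$), or by $a+b-c$ (if $a+b>c$), with the boundary case $a+b=c$ giving $\mathcal{G}_2(a/c;z)$, which is again in $\mathfrak{V}_c$; and similarly for $\mathcal{G}_2(a/c;z+1)\mapsto \mathcal{G}_2(a,a,c;z)$. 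Under $z\mapsto -1/z$, the factor $1/\sqrt{-iz}$ takes $\mathcal{G}_1\leftrightarrow\mathcal{G}_2$ with the same indices, so closure is automatic. Collecting all these relations gives a well-defined representation of $\SL_2(\mathbb{Z})$ on $\Span_{\mathbb{C}}(\mathfrak{V}_c)$ with weight-$\tfrac12$ multiplier system, which is exactly the assertion that $\mathfrak{V}_c$ is a vector-valued Maass form of weight $\tfrac12$ for $\SL_2(\mathbb{Z})$.

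The main obstacle in this proof is bookkeeping rather than deep structure: one must verify in the $T$-case that the several sub-cases (distinguished by whether $a\ge b$, $a+b<c$, $=c$, or $>c$) all land inside $\mathfrak{V}_c$ as defined, and check that the boundary case $a+b=c$ in (GG2trans1) genuinely reproduces $\mathcal{G}_2(a/c;z)$ rather than some degenerate object; this is where the subtle definitions of $\mathcal{G}_2(a/c;z)$ and $\mathcal{G}_2(a,b,c;z)$, with their respective $\varepsilon_2$ corrections, were designed precisely to give a consistent set. Once this case analysis is carried out, together with the observation that $(c,6)=1$ ensures $a/c\notin\{\tfrac16,\tfrac56\}$ so that none of the $\varepsilon_i$'s blow up, the proof is complete.
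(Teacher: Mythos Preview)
Your proposal is correct and matches the paper's approach: the paper states this corollary immediately after Theorem \thm{Gtrans} without proof, treating it as a direct consequence of the explicit $S$- and $T$-transformations established there together with the standard Maass-form properties inherited from the construction. You have simply spelled out the details the paper leaves implicit, including the analytic conditions in part (a) and the index bookkeeping in part (b).
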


\section{A Maass form multiplier}
\mylabel{sec:MFM}

We will find that transformation formulas are more tractable if we modify the
definition of the functions $\mathcal{G}_j$ by multiplying by the Dedekind
eta-function $\eta(z)$. 
For a function $F(z)$, and a weight $k$ we define
the usual stroke operator
\beq
 \stroke{F}{A}{k} := (ad-bc)^{k/2}\,(cz+d)^{-k} \, F\left(A\,z\right),\quad\mbox{for}\quad
A=\AMat\in\GL_2^{+}(\mathbb{Z}),
\mylabel{eq:strokedef}
\eeq
where $k\in\frac{1}{2}\mathbb{Z}$, and when calculating $(cz+d)^{-k}$ we take the principal
value.
Our main result is
\begin{theorem}
\mylabel{thm:mainthm}
Let $p>3$ be prime, suppose $1\le\ell\le(p-1)$, and define
\begin{equation}
\Fell{1}{\ell}{p}{z} := \eta(z) \, \Gell{1}{\ell}{p}{z}.
\mylabel{eq:F1def}
\end{equation}
Then
\beq
\stroke{\Fell{1}{\ell}{p}{z}}{A}{1} = \mu(A,\ell)\,
\Fell{1}{\overline{d\ell}}{p}{z},
\mylabel{eq:Fmult}
\eeq
where
$$
\mu(A,\ell) = \exp\left(\frac{3\pi i c d \ell^2}{p^2}\right) \,
(-1)^{\frac{c\ell}{p}} \, (-1)^{\FL{\frac{d\ell}{p}}},
$$
and
$$
A = \AMat \in \Gamma_0(p).
$$
Here $\overline{m}$ is the least nonnegative residue of $m\pmod{p}$.
\end{theorem}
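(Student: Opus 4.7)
The plan is to reduce the statement to the single-step transformations recorded in Theorem~\ref{thm:Gtrans}, together with the classical Dedekind eta transformation. The strategy is to decompose an arbitrary $A\in\Gamma_0(p)$ into a word in the $\SL_2(\mathbb{Z})$ generators $T=\TMat$ and $S=\SMat$ via the Euclidean algorithm applied to the pair $(c,d)$, apply the transformations \eqref{eq:G1trans1}--\eqref{eq:GG2trans2} iteratively, and then show that the assumption $p\mid c$ forces the output to collapse back onto a single-parameter $\mathcal{G}_1(\cdot/p;z)$ rather than remaining in the two-parameter family $\mathcal{G}_j(\cdot,\cdot,p;\cdot)$.

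As a warm-up I would verify the result for the generator $A=T$: here $c=0$, $d=1$, so $\mu(T,\ell)=1$ and $\overline{d\ell}=\ell$, and the identity $\mathcal{F}_1(\ell/p;z+1)=\mathcal{F}_1(\ell/p;z)$ follows at once from $\eta(z+1)=\zeta_{24}\,\eta(z)$ combined with \eqref{eq:G1trans1}. For general $A$ with $c=pc'\ne 0$, the Euclidean procedure expresses $A$ (up to $\pm I$) as a word $T^{n_k}ST^{n_{k-1}}S\cdots ST^{n_0}$; at each step one tracks the current ``state'' (which of $\mathcal{G}_1(\cdot/p;\cdot)$, $\mathcal{G}_2(\cdot/p;\cdot)$, $\mathcal{G}_1(\cdot,\cdot,p;\cdot)$, $\mathcal{G}_2(\cdot,\cdot,p;\cdot)$ one is in) and the accumulated phase. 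The $T$-moves in the two-parameter state shift the second index, while the $S$-moves swap $\mathcal{G}_1\leftrightarrow\mathcal{G}_2$; the congruence $p\mid c$ is precisely what makes the second index return to $0\pmod p$ by the time one reaches the final $S$, sending the state back to a single-parameter $\mathcal{G}_1$, with the first index reduced to $\overline{d\ell}\pmod p$.

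The remaining issue is to identify the accumulated multiplier with the stated formula $\mu(A,\ell)=\exp(3\pi i c d\ell^2/p^2)\,(-1)^{c\ell/p}\,(-1)^{\lfloor d\ell/p\rfloor}$. Rather than tracking a long product of $\zeta_{24}^{-1}$, $\zeta_{2p^2}^{\pm 3(\cdot)^2}$, $\zeta_{2p}^{5(\cdot)}$ factors through every step of the word, the most efficient approach is to treat both sides of \eqref{eq:Fmult} as functions of $A$ and $\ell$ and check the cocycle identity
\[
\mu(A_1A_2,\ell)=\mu(A_1,\overline{d_2\ell})\,\mu(A_2,\ell),
\]
where $d_2$ is the lower-right entry of $A_2$. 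Granted the cocycle property (which is automatic for the weight-$1$ stroke operator on a family indexed by $\ell\pmod p$), it suffices to verify \eqref{eq:Fmult} on a generating set of $\Gamma_0(p)$: the translation $T$ (done above) and a single additional generator, for which one runs the Euclidean reduction explicitly. One must then reconcile with the Dedekind multiplier $\nu_\eta(A)$, since $\mathcal{F}_1=\eta\,\mathcal{G}_1$ inherits both the $\mathcal{G}_1$-multiplier and $\nu_\eta(A)$.

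The main obstacle is precisely the bookkeeping of multipliers. Two ingredients are delicate: the quadratic phase $\exp(3\pi i cd\ell^2/p^2)$ must emerge from the $\zeta_{2p^2}^{\pm 3(\cdot)^2}$ contributions and a piece of $\nu_\eta(A)$; and the two sign factors $(-1)^{c\ell/p}$ and $(-1)^{\lfloor d\ell/p\rfloor}$ must be recovered from the $(-1)$'s introduced when two-parameter indices cross the boundary $a+b=c$ or wrap around modulo $p$ during the word reduction, combined again with the sign part of $\nu_\eta$. Once one has set up the cocycle framework and reduced to checking a short list of generators, these signs can be extracted by a careful, but finite, case analysis using the explicit Dedekind sum formula for $\nu_\eta$ on $\Gamma_0(p)$ together with the closed-form multipliers in \eqref{eq:G1trans1}--\eqref{eq:GG2trans1}.
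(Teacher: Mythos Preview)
Your high-level strategy---establish a cocycle relation and then verify \eqref{eq:Fmult} on a generating set of $\Gamma_0(p)$ using the single-step transformations of Theorem~\ref{thm:Gtrans}---is exactly the paper's approach. Two concrete gaps, however, make the proposal incomplete as written.

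First, $\Gamma_0(p)$ is not generated by $T$ together with ``a single additional generator.'' The paper invokes Rademacher's theorem: $\Gamma_0(p)$ is generated by $T$ and the $p-1$ matrices $V_k=ST^kST^{-k^*}S^{-1}$, $1\le k\le p-1$, where $kk^*\equiv -1\pmod p$. For \emph{each} $V_k$ one runs the five-step reduction through $\mathcal{F}_1\to\mathcal{F}_2\to\mathcal{F}_2(\ell,\overline{k\ell},p;\cdot)\to\mathcal{F}_1(\ell,\overline{k\ell},p;\cdot)\to\mathcal{F}_1(0,\overline{k\ell},p;\cdot)\to\mathcal{F}_2(0,\overline{k\ell},p;\cdot)$ and then uses the identity $\mathcal{F}_2(0,\ell,p;z)=i\zeta_{2p}^{-5\ell}\mathcal{F}_1(\ell/p;z)$ (a consequence of $(ST)^3=-I$) to land back in the one-parameter family. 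Matching the accumulated root of unity against the closed form $\mu(V_k,\ell)$ is a genuine congruence computation modulo $2p^2$, not a formality.

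Second, the cocycle property you call ``automatic'' is automatic only for the \emph{left} side of \eqref{eq:Fmult}, because the stroke operator is a group action. To infer that agreement on generators propagates to all of $\Gamma_0(p)$ you must independently verify that the \emph{right} side satisfies the same cocycle law, namely $\mu(AB,\ell)=\mu(A,\ell)\,\mu(B,\overline{d\ell})$ for $A,B\in\Gamma_0(p)$. This is an arithmetic identity for the explicit formula $\mu(A,\ell)=\exp(3\pi i cd\ell^2/p^2)(-1)^{c\ell/p}(-1)^{\lfloor d\ell/p\rfloor}$, and the paper checks it directly by reducing a congruence modulo $2p^2$ separately modulo $p^2$ and modulo $2$; the mod~$2$ piece in particular requires some care with the floor terms. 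This verification is roughly half of the proof and cannot be waved away.
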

\begin{remark}
The function $\mu(A,\ell)$ is reminiscent of functions that occur in transformation
formulas of certain theta-functions \cite[Lemma 2.1]{Bi89} on $\Gamma_0(p)$.
\end{remark}
\begin{cor}
\mylabel{cor:Ftrans}
Let $p>3$ be prime and suppose $1\le \ell\le\frac{1}{2}(p-1)$. Then         
\beq
\stroke{\Fell{1}{\ell}{p}{z}}{A}{1} = \Fell{1}{\ell}{p}{z},
\mylabel{eq:Ftrans}
\eeq
and
\beq
\stroke{\Gell{1}{\ell}{p}{z}}{A}{\tfrac{1}{2}} = \frac{1}{\nu_\eta(A)}\,\Gell{1}{\ell}{p}{z},
\mylabel{eq:Gtrans}
\eeq
where $A \in \Gamma_0(p^2) \cap \Gamma_1(p)$ and $\nu_\eta(A)$ is the eta-multiplier
\beq
\stroke{\eta(z)}{A}{\tfrac{1}{2}} = \nu(A) \,\eta(z).
\mylabel{eq:etamult}
\eeq
\end{cor}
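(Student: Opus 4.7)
The plan is to deduce Corollary \corol{Ftrans} directly from Theorem \thm{mainthm} by specializing to matrices $A = \AMat \in \Gamma_0(p^2) \cap \Gamma_1(p) \subset \Gamma_0(p)$ and showing that the multiplier $\mu(A,\ell)$ collapses to $1$. Writing $c = p^2 C$ and $d = 1 + pK$ for integers $C,K$, the hypothesis $1 \le \ell \le \tfrac{1}{2}(p-1) < p$ together with $d \equiv 1 \pmod p$ forces $\overline{d\ell} = \ell$, so Theorem \thm{mainthm} yields
\[
\stroke{\Fell{1}{\ell}{p}{z}}{A}{1} = \mu(A,\ell)\,\Fell{1}{\ell}{p}{z},
\]
reducing \eqn{Ftrans} to the single arithmetic identity $\mu(A,\ell)=1$.

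To compute $\mu(A,\ell)$ I use $\exp(3\pi i n) = (-1)^n$ for integers $n$, and observe that $\FL{d\ell/p} = \FL{\ell/p + K\ell} = K\ell$ since $0 < \ell < p$. This gives
\[
\mu(A,\ell) = (-1)^{C(1 + pK)\ell^2 + pC\ell + K\ell}.
\]
Because $p$ is odd, reducing the exponent modulo $2$ yields
\[
\mu(A,\ell) = (-1)^{C\ell(\ell+1) + K\ell(C\ell + 1)} = (-1)^{K\ell(C\ell+1)},
\]
since $\ell(\ell+1)$ is always even.

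The main (mild) obstacle is ruling out the bad parity case where $K\ell(C\ell+1)$ is odd; this requires $\ell$ and $K$ odd and $C$ even. But $K$ odd and $p$ odd force $d = 1 + pK$ to be even, while $C$ even forces $c = p^2 C$ to be even; then $\gcd(c,d) \ge 2$ contradicts $ad-bc = 1$. Hence $\mu(A,\ell) = 1$, proving \eqn{Ftrans}.

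Finally, \eqn{Gtrans} follows by factoring $\Fell{1}{\ell}{p}{z} = \eta(z)\,\Gell{1}{\ell}{p}{z}$ through the slash operator. By the multiplicativity $\stroke{(fg)}{A}{k_1+k_2} = \stroke{f}{A}{k_1}\cdot\stroke{g}{A}{k_2}$ (which holds on $\mathfrak{h}$ for the principal branch because $((cz+d)^{-1/2})^2 = (cz+d)^{-1}$), combined with \eqn{etamult} and \eqn{Ftrans}, we get
\[
\eta(z)\,\Gell{1}{\ell}{p}{z} = \stroke{\Fell{1}{\ell}{p}{z}}{A}{1} = \nu_\eta(A)\,\eta(z)\cdot\stroke{\Gell{1}{\ell}{p}{z}}{A}{\tfrac{1}{2}},
\]
and dividing by $\nu_\eta(A)\,\eta(z)$ delivers \eqn{Gtrans}.
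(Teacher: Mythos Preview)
Your proof is correct and follows essentially the same approach as the paper: you apply Theorem \thm{mainthm}, note that $\overline{d\ell}=\ell$, and then verify $\mu(A,\ell)=1$ by a parity computation that ultimately rests on the fact that $c$ and $d$ cannot both be even. The paper's write-up packages the parity step as $3cd\ell^2 + c\ell p + p^2\FL{d\ell/p}\equiv \ell(c+1)(d+1)\equiv 0\pmod 2$, while you substitute $c=p^2C$, $d=1+pK$ and reduce to $K\ell(C\ell+1)$; these are the same argument in different notation.
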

\begin{remark}
Equation \ref{eq:Gtrans} strengthens one of the main results 
\cite[Theorem 1.2, p.424]{Br-On10}
of Bringmann and Ono's paper in the case that $c=p>3$ is prime.
\end{remark}
\begin{proof}
Let 
$$
A = \AMat \in \Gamma_0(p^2) \cap \Gamma_1(p).
$$
Then
$c\equiv0\pmod{p^2}$ and $a\equiv d\equiv1\pmod{p}$. So
$$
\mu(A,\ell) =\exp\left(\frac{\pi i}{p^2}( 3cd\ell^2 + c\ell p + p^2\FL{\frac{d\ell}{p}})\right)=1,
$$
since
$$
 3cd\ell^2 + c\ell p + p^2\FL{\frac{d\ell}{p}}       \equiv 0 \pmod{p^2},
$$
and
$$
d\ell=p\FL{\frac{d\ell}{p}} +\ell\equiv\FL{\frac{d\ell}{p}}+\ell\pmod{2},
$$
so that 
\beqs
3cd\ell^2 + c\ell p + p^2\FL{\frac{d\ell}{p}} 
 \equiv cd\ell + c\ell + d\ell + \ell 
 \equiv \ell(c+1)(d+1) \equiv 0\pmod{2}
\eeqs
since either $c$ or $d$ is odd. Thus (\ref{eq:Ftrans}) follows from (\ref{eq:Fmult})
and (\ref{eq:Gtrans}) is immediate.
\end{proof}

\begin{cor}
\mylabel{cor:F2trans}
Let $p>3$ be prime, suppose $1\le\ell\le(p-1)$, and define
\begin{equation}
\Fell{2}{\ell}{p}{z} := \eta(z) \, \Gell{2}{\ell}{p}{z}.
\mylabel{eq:F2def}
\end{equation}
Then         
\beq
\stroke{\Fell{2}{\ell}{p}{p^2\,z}}{A}{1} = \Fell{2}{\ell}{p}{p^2\,z},
\mylabel{eq:F2trans}
\eeq
for $A \in \Gamma_0(p^2) \cap \Gamma_1(p)$.
\end{cor}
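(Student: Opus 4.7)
The plan is to reduce Corollary \corol{F2trans} to the already-established Corollary \corol{Ftrans} via the Fricke-type involution $W_{p^2}=\begin{pmatrix}0 & -1\\ p^2 & 0\end{pmatrix}$. The transformation \eqn{G1trans2} couples $\Gell{2}{\ell}{p}{z}$ to $\Gell{1}{\ell}{p}{-1/z}$, and the dilation $z\mapsto p^2 z$ combines with the modular inversion $S$ into precisely $W_{p^2}$, which as we shall see normalizes the group $\Gamma_0(p^2)\cap\Gamma_1(p)$.

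First I would combine \eqn{G1trans2} with the familiar eta transformation $\eta(-1/z)=\sqrt{-iz}\,\eta(z)$ to derive
\[
\Fell{1}{\ell}{p}{-1/z}=(-iz)\,\Fell{2}{\ell}{p}{z},
\]
which rewrites as $\Fell{2}{\ell}{p}{z}=i\cdot\stroke{\Fell{1}{\ell}{p}{\cdot}}{S}{1}(z)$ with $S=\SMat$. Composing this with the dilation-by-$p^2$ matrix $V=\PPMat$, for which $\stroke{F}{V}{1}(z)=p\,F(p^2 z)$, and using the identity $SV=W_{p^2}$, I arrive at
\[
\Fell{2}{\ell}{p}{p^2 z}=\tfrac{i}{p}\,\stroke{\Fell{1}{\ell}{p}{\cdot}}{W_{p^2}}{1}(z).
\]

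Next, for any $A=\AMat\in\Gamma_0(p^2)\cap\Gamma_1(p)$, a direct matrix calculation yields $W_{p^2}A=\tilde A\,W_{p^2}$ with
\[
\tilde A=\begin{pmatrix}d & -c/p^2 \\ -p^2 b & a\end{pmatrix}.
\]
The main (though elementary) obstacle is to verify that $\tilde A$ again lies in $\Gamma_0(p^2)\cap\Gamma_1(p)$: the hypothesis $p^2\mid c$ makes $-c/p^2$ an integer and $-p^2 b\equiv 0\pmod{p^2}$, while $a\equiv d\equiv 1\pmod p$ supplies the $\Gamma_1(p)$ conditions on the diagonal of $\tilde A$, and $\det\tilde A=ad-bc=1$ is automatic.

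Finally, the slash calculus gives
\[
\stroke{\Fell{2}{\ell}{p}{p^2\cdot}}{A}{1}=\tfrac{i}{p}\,\stroke{\Fell{1}{\ell}{p}{\cdot}}{W_{p^2}A}{1}=\tfrac{i}{p}\,\stroke{\bigl(\stroke{\Fell{1}{\ell}{p}{\cdot}}{\tilde A}{1}\bigr)}{W_{p^2}}{1},
\]
and applying Corollary \corol{Ftrans} to $\tilde A$ collapses the inner bracket to $\Fell{1}{\ell}{p}{\cdot}$, returning $\Fell{2}{\ell}{p}{p^2 z}$ on the right. No analytic input beyond the ingredients already assembled is required; the whole argument is a bookkeeping exercise in the slash operator, once $W_{p^2}$ is recognized as an intertwiner for $\Gamma_0(p^2)\cap\Gamma_1(p)$.
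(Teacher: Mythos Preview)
Your proof is correct and is essentially identical to the paper's own argument: the paper also writes $\Fell{2}{\ell}{p}{z}$ as (a constant times) $\stroke{\Fell{1}{\ell}{p}{z}}{S}{1}$, composes with $P=\PPMat$, verifies the conjugation identity $SPA=BSP$ with $B=\begin{pmatrix} d & -c/p^2\\ -p^2 b & a\end{pmatrix}\in\Gamma_0(p^2)\cap\Gamma_1(p)$, and then applies Corollary~\corol{Ftrans} to $B$. Your $W_{p^2}$ is exactly the paper's $SP$, and your $\tilde A$ is the paper's $B$; you have simply given the matrix a name and invoked the Fricke involution language.
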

\begin{remark}
We prove Corollary \corol{F2trans} in section \subsect{pfcorF2}
\end{remark}

\subsection{Proof of Theorem \ref{thm:mainthm}}
\mylabel{subsec:pfmainthm}

It is well-known that the matrices
$$
S=\SMat,\qquad T=\TMat.
$$
generate $\SL_2(\mathbb{Z})$, and
\beq
\stroke{\eta(z)}{T}{\tfrac{1}{2}} = \zeta_{24} \, \eta(z),\qquad
\stroke{\eta(z)}{S}{\tfrac{1}{2}} = \exp\Lpar{-\tfrac{\pi i}{4}}\, \eta(z).
\mylabel{eq:etatrans}
\eeq
We need
\begin{theorem}[Rademacher\cite{Ra29}]
Let $p$ be prime. Then a set of generators for $\Gamma_0(p)$ is given
by
$$
T,\qquad V_k \quad(1\le k \le p-1),
$$
where
$$
V_k = S T^k S T ^{-\ks} S^{-1} = 
\begin{pmatrix}
-\ks & - 1 \\
k\ks + 1 & k
\end{pmatrix}
$$
and $\ks$ is given by $1\le \ks \le p-1$ and $k\ks \equiv -1\pmod{p}$.
Furthermore for $p>3$ the number of generators can be reduced to $2\FL{\frac{p}{12}}+3$.
\end{theorem}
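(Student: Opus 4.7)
The plan is to prove Rademacher's result via the Reidemeister--Schreier rewriting process applied to $\SL_2(\mathbb{Z})=\langle S,T\rangle$ and the subgroup $\Gamma_0(p)$, which has index $p+1$. The right cosets of $\Gamma_0(p)$ in $\SL_2(\mathbb{Z})$ are in bijection with $\mathbb{P}^1(\mathbb{F}_p)$ via the bottom row modulo $p$, and a convenient Schreier transversal is $R_\infty:=I$ together with $R_k:=ST^k$ for $k=0,1,\dots,p-1$. The machinery then produces a generating set for $\Gamma_0(p)$ consisting of the elements $R\cdot s\cdot\overline{Rs}^{\,-1}$ as $R$ runs over the transversal and $s\in\{S,T\}$, with $\overline{\gamma}$ denoting the transversal representative of $\Gamma_0(p)\gamma$.

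I would then compute these Schreier generators explicitly. For $s=T$, $R_kT=R_{k+1}$ is trivial when $0\le k\le p-2$; the nontrivial contributions are $T$ itself (from $R_\infty$) and $ST^pS^{-1}$ (from $R_{p-1}$, since the bottom row of $ST^p$ is $(1,p)\equiv(1,0)\pmod p$, the coset of $R_0=S$). For $s=S$, the bottom row of $R_kS$ is $(k,-1)$, projectively $(1,-k^{-1})$ for $1\le k\le p-1$, so $\overline{R_kS}=R_{k^*}$ with $k^*$ defined by $kk^*\equiv-1\pmod p$ and $1\le k^*\le p-1$; the resulting generator is precisely $V_k=ST^kST^{-k^*}S^{-1}$, while $k=0$ gives $S^2=-I$. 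Thus at this stage $\Gamma_0(p)$ is generated by $T$, $ST^pS^{-1}$, $V_1,\dots,V_{p-1}$, and $-I$.

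To pare this down to $\{T,V_1,\dots,V_{p-1}\}$ I would lift the ambient relations $S^4=I$ and $(ST)^3=S^2$ through the Reidemeister--Schreier rewriting. The lifted $S^2$-relation at the coset $R_k$ yields an identity of the shape $V_kV_{k^*}=\pm I$, which in particular displays $-I$ as a word in the $V_k$; lifting $(ST)^3=S^2$ similarly interrelates $T$, $ST^pS^{-1}$ and the $V_k$, exhibiting $ST^pS^{-1}$ as a product of $T$ and certain $V_k$. For the final reduction to $2\FL{p/12}+3$ generators when $p>3$, these two lifted relations organize $\{V_1,\dots,V_{p-1}\}$ into orbits under the involution $k\mapsto k^*$ (coming from $S^2=-I$) and the order-three cycle $k\mapsto-(k+1)^{-1}$ (coming from $(ST)^3=-I$) acting on $\mathbb{P}^1(\mathbb{F}_p)$. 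Choosing one $V_k$ per orbit and retaining the cusp generator $T$, orbit counting yields a minimal generating set of size $2\FL{p/12}+3$.

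The main obstacle lies in this final combinatorial step. The orbit count depends on the fixed points of $k\mapsto k^*$ and of $k\mapsto-(k+1)^{-1}$, which exist precisely when $\leg{-1}{p}=1$ and $\leg{-3}{p}=1$ respectively, forcing a case split on $p\bmod 12$ over the four admissible residues $1,5,7,11$. In each case one verifies directly that, after folding in the lifted relations together with the cusp contributions from $T$ and $ST^pS^{-1}$, exactly $2\FL{p/12}+3$ generators remain; the genus and elliptic-point counts of $X_0(p)$ furnish an independent cross-check. The earlier Schreier-transversal steps are essentially automatic once the transversal is fixed.
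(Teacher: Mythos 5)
The paper contains no proof of this statement: it is imported verbatim from Rademacher's 1929 paper with only the citation, and the rest of Section 4 uses just the first assertion (that $T$ and the $V_k$ generate $\Gamma_0(p)$). So your argument can only be judged on its own terms. Your Reidemeister--Schreier treatment of that first assertion is correct and essentially complete: the transversal $\{I\}\cup\{ST^k : 0\le k\le p-1\}$ is prefix-closed, the coset identification via bottom rows in $\mathbb{P}^1(\mathbb{F}_p)$ is right, and the nontrivial Schreier generators are exactly $T$, $ST^pS^{-1}$, $S^2=-I$ and the $V_k$. The redundancy of the two extras is genuine and can even be seen without lifting any relations: direct multiplication gives $V_kV_{\ks}=-I$ for every $k$, and $V_{p-1}T^{-1}=-\,ST^pS^{-1}$ (recall $(p-1)^{*}=1$), so both $-I$ and $ST^pS^{-1}$ are words in $T$, $V_1$, $V_{p-1}$. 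Up to here the proposal proves everything the paper actually needs.

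The gap is in the reduction to $2\FL{\frac{p}{12}}+3$ generators. As literally written, ``choosing one $V_k$ per orbit'' under the involution $k\mapsto\ks$ \emph{and} the order-three map jointly cannot be the mechanism: those two permutations are the mod-$p$ reductions of $S$ and $ST$ acting on $\mathbb{P}^1(\mathbb{F}_p)$, hence generate a transitive group, so there would be a single joint orbit and the count would collapse to $2$. What the lifted relations really give is a sequential elimination: each $S^2$-relation expresses $V_{\ks}$ through $V_k^{-1}$ and $-I$ (one elimination per $2$-cycle of $k\mapsto\ks$), and each $(ST)^3$-relation is a length-three word in the $V$'s, $T$ and $ST^pS^{-1}$ allowing one further elimination per $3$-cycle --- but these eliminations must be scheduled without circularity, and the fixed points of the two maps, which exist according to $\leg{-1}{p}$ and $\leg{-3}{p}$ and hence force the four-way split on $p\bmod 12$, supply the constant $3$. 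That bookkeeping is exactly the nontrivial half of Rademacher's theorem, and your sketch defers it to ``one verifies directly.'' The genus and elliptic-point computation confirms that $2\FL{\frac{p}{12}}+3$ is the minimal possible number, but it does not construct the generating set. Since the paper never invokes the refined count, the gap is harmless for the application, but the statement as quoted is not fully proved by the proposal.
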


As in \eqn{F1def}, \eqn{F2def}
we define functions $\Fell{j}{}{}{}$ by multiplying $\Gell{j}{}{}{}$ by
$\eta(z)$:
\begin{align}
\mathcal{F}_1(a,b,c;z) &:=  \eta(z) \, \mathcal{G}_1(a,b,c;z) 
\mylabel{eq:F1abcdef}\\
\mathcal{F}_2(a,b,c;z) &:=  \eta(z) \, \mathcal{G}_2(a,b,c;z).
\mylabel{eq:F2abcdef}
\end{align}
The following follows from Theorem \ref{thm:Gtrans} and (\ref{eq:etatrans}).
\begin{theorem}
\mylabel{thm:Ftrans} 
Suppose $0 \le a < c$ and $0< b < c$ are integers and assume $(c,6)=1$.
\begin{enumerate}
\item[(1)]
For $z\in\mathfrak{h}$ we have
\begin{align*}
\stroke{\mathcal{F}_1\Lpar{\frac{a}{c};z}}{T}{1} &= \mathcal{F}_1\Lpar{\frac{a}{c};z},\\
\stroke{\mathcal{F}_2\Lpar{\frac{a}{c};z}}{T}{1} &= \zeta_{2c}^{5a} \, \zeta_{2c^2}^{-3a^2}\,
\mathcal{F}_2(a,a,c;z),\\
\stroke{\mathcal{F}_1(a,b,c;z)}{T}{1}&=
\begin{cases}
\zeta_{2c^2}^{3b^2} \, \mathcal{F}_1(a-b,b,c;z) & \mbox{if $a\ge b$},\\
-\zeta_{2c^2}^{3b^2} \, \zeta_{c}^{-3b}\, \mathcal{F}_1(a-b+c,b,c;z) & \mbox{otherwise},
\end{cases}\\
\stroke{\mathcal{F}_2(a,b,c;z)}{T}{1} &=
\zeta_{2c}^{5a} \, \zeta_{2c^2}^{-3a^2}\,
\begin{cases}
\mathcal{F}_2(a,a+b,c;z) & \mbox{if $a+b<c$},\\
\mathcal{F}_2\Lpar{\frac{a}{c};z} & \mbox{if $a+b=c$},\\
\mathcal{F}_2(a,a+b-c,c;z) & \mbox{otherwise},
\end{cases}
\end{align*}
where $a$ is assumed to be positive in the first and second formula.
\item[(2)]
For $z\in\mathfrak{h}$ we have
\begin{align*}
\stroke{\mathcal{F}_1\Lpar{\frac{a}{c};z}}{S}{1} &= -i\,\mathcal{F}_2\Lpar{\frac{a}{c};z},\\
\stroke{\mathcal{F}_2\Lpar{\frac{a}{c};z}}{S}{1} &= -i\,\mathcal{F}_1\Lpar{\frac{a}{c};z},\\
\stroke{\mathcal{F}_1\Lpar{a,b,c;z}}{S}{1}&=-i\,\mathcal{F}_2(a,b,c;z),\\
\stroke{\mathcal{F}_2\Lpar{a,b,c;z}}{S}{1} &= -i\,\mathcal{F}_1(a,b,c;z),
\end{align*}
where again $a$ is assumed to be positive in the first and second formula.
\end{enumerate}
\end{theorem}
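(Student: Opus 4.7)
The plan is to derive Theorem \ref{thm:Ftrans} directly from Theorem \ref{thm:Gtrans} together with the eta-function transformations \eqn{etatrans}, since by construction $\mathcal{F}_j = \eta \cdot \mathcal{G}_j$ in each of the four cases. Because the weight has been shifted from $\tfrac{1}{2}$ to $1$ by the extra factor of $\eta$, the stroke operator $\strokeb{\cdot}{1}$ contributes one extra power of $(cz+d)^{-1/2}$ relative to $\strokeb{\cdot}{1/2}$, and this is exactly what will absorb the square-root factors coming from the $S$-transformations of $\mathcal{G}_j$.

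For the $T$-transformations the calculation is immediate: since $T = \TMat$ has $cz+d=1$ and determinant $1$, the stroke $\strokeb{\cdot}{1}$ just substitutes $z \mapsto z+1$. First I would write, for instance,
\beqs
\stroke{\mathcal{F}_1\Lpar{\tfrac{a}{c};z}}{T}{1} = \eta(z+1)\,\mathcal{G}_1\Lpar{\tfrac{a}{c};z+1} = \zeta_{24}\,\eta(z)\cdot\zeta_{24}^{-1}\,\mathcal{G}_1\Lpar{\tfrac{a}{c};z},
\eeqs
where the first equality uses \eqn{F1def} and the second combines \eqn{etatrans} with \eqn{G1trans1}. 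The two $24$-th roots of unity cancel, giving the claimed identity. The remaining three $T$-formulas are proved in exactly the same way by pairing \eqn{etatrans} with \eqn{G2trans1}, \eqn{GG1trans1}, \eqn{GG2trans1} respectively; in each case the factor $\zeta_{24}^{-1}$ appearing in Theorem \ref{thm:Gtrans} is exactly cancelled by $\eta(z+1)/\eta(z) = \zeta_{24}$, and the remaining root-of-unity prefactors carry through unchanged.

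For the $S$-transformations, with $S=\SMat$ one has $\strokeb{F}{S}{1} = z^{-1} F(-1/z)$. For $\mathcal{F}_1\Lpar{\tfrac{a}{c};z}$ I would compute
\beqs
\stroke{\mathcal{F}_1\Lpar{\tfrac{a}{c};z}}{S}{1} = \frac{1}{z}\,\eta\Lpar{\Sz}\,\mathcal{G}_1\Lpar{\tfrac{a}{c};\Sz} = \frac{1}{z}\,\bigl(\sqiz\,\eta(z)\bigr)\bigl(\sqiz\,\mathcal{G}_2\Lpar{\tfrac{a}{c};z}\bigr),
\eeqs
using \eqn{etatrans} for $\eta$ at $\Sz$ and \eqn{G1trans2} for $\mathcal{G}_1$ at $\Sz$. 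The two square-root factors combine into $-iz$, and after dividing by $z$ one obtains $-i\,\mathcal{F}_2\Lpar{\tfrac{a}{c};z}$, which is the desired formula. The other three $S$-formulas follow identically from \eqn{G2trans2}, \eqn{GG1trans2}, \eqn{GG2trans2}.

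There is essentially no obstacle here beyond bookkeeping; the one point requiring care is the choice of branch of $\sqiz$ implicit in both $\eta(-1/z)$ and in Theorem \ref{thm:Gtrans}, and verifying that $z^{-1}(\sqiz)^{2}=-i$ holds as an identity of functions on $\mathfrak h$ (not just up to sign). Since both conventions use the principal branch with $\Im z>0$, the factor is unambiguous, and the proof reduces to the two-line verifications sketched above for each of the eight transformation identities.
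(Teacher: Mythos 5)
Your proposal is correct and is precisely the argument the paper intends: the paper derives Theorem \ref{thm:Ftrans} in one line from Theorem \ref{thm:Gtrans} and \eqn{etatrans}, and your computations (cancellation of $\zeta_{24}$ in the $T$-cases, and $z^{-1}(\sqrt{-iz})^2=-i$ in the $S$-cases) supply exactly the omitted bookkeeping.
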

Throughout this section we assume $p>3$ is prime and $1\le \ell \le p-1$.
Since $(ST)^3 = -I$ we have
\beq
\mathcal{F}_2(0,\ell,p;z) = i\,\zeta_{2p}^{-5\ell}\,\Fell{1}{\ell}{p}{z}
\mylabel{eq:F20id}
\eeq
using Theorem \ref{thm:Ftrans}. 
We require the transformation
$$
\stroke{\mathcal{F}_1(a,b,c;z)}{T^{-1}}{1}=
\begin{cases}
\zeta_{2c^2}^{-3b^2} \, \mathcal{F}_1(a+b,b,c;z) & \mbox{if $a+b< c$},\\
-\zeta_{2c^2}^{-3b^2} \, \zeta_{p}^{3b}\, \mathcal{F}_1(a+b-c,b,c;z) & \mbox{otherwise}.
\end{cases}
$$
which follows from Theorem \ref{thm:Ftrans} assuming $0\le a,b<c$.
        
Our first goal is to show that Theorem \ref{thm:mainthm} holds when $A$ is a
generator of $\Gamma_0(p)$. The result is clearly true when $A=T$. We assume
$1\le k \le p-1$. Applying Theorem \thm{Ftrans} we have
$$
\stroke{\Fell{1}{\ell}{p}{z}}{S}{1} = (-i)\, \Fell{2}{\ell}{p}{z},
$$
$$
\stroke{\Fell{2}{\ell}{p}{z}}{T^k}{1}
= \zeta_{2p}^{5k\ell} \, \zeta_{2p^2}^{-3\ell^2k} \, 
\mathcal{F}_{2}(\ell,\overline{k\ell},p;z),
$$
$$
\stroke{\mathcal{F}_{2}(\ell,\overline{k\ell},p;z)}{S}{1} = -i\, 
\mathcal{F}_{1}(\ell,\overline{k\ell},p;z), 
$$
$$
\stroke{\mathcal{F}_{1}(\ell,\overline{k\ell},p;z)}{T^{-\ks}}{1}
= \left(\zeta_{2p^2}^{-3(\overline{k\ell})^2}\right)^{\ks}\,
\left(-\zeta_p^{3\overline{k\ell}}\right)^j\,\mathcal{F}_1(0,\overline{k\ell},p;z),
$$
where 
$$
j = \frac{\ell + \overline{k\ell}\ks}{p},
$$
and
$$
\stroke{\mathcal{F}_1(0,\overline{k\ell},p;z)}{S^{-1}}{1} = i \,
\mathcal{F}_2(0,\overline{k\ell},p;z).                                      
$$
Putting all this together we find that
$$
\stroke{\Fell{1}{\ell}{p}{z}}{V_k}{1}
= -i\,\zeta_{2p^2}^{
-3(\ell^2k+\ks(\overline{k\ell})^2)}\,
\zeta_{2p}^{5k\ell + 6j\overline{k\ell}} \,(-1)^j \,
\mathcal{F}_2(0,\overline{k\ell},p;z).
$$
Using \eqn{F20id} we have
$$
\stroke{\Fell{1}{\ell}{p}{z}}{V_k}{1} 
= \mutwid(V_k,\ell) 
\, \Fell{1}{\overline{k\ell}}{p}{z},                           
$$
where
$$
\mutwid(V_k,\ell) = 
\zeta_{2p^2}^{
-3(\ell^2k+\ks(\overline{k\ell})^2)}\,
\zeta_{2p}^{5(k\ell-\overline{k\ell}) + 6j\overline{k\ell}} \,(-1)^j,
$$
and
$$
j = \frac{\ell + \overline{k\ell}\ks}{p}.
$$

Next we must show that
$$
\mutwid(V_k,\ell) = \mu(V_k,\ell).
$$
This is equivalent to showing that
\begin{align}
&5p(k\ell-\overline{k\ell}) - 3(\ell^2k + \ks (\overline{k\ell})^2) + p(\ell + \ks\overline{k\ell})
+6\overline{k\ell}(\ell+\overline{k\ell}\ks)
\mylabel{eq:congy}\\
&\qquad \equiv
3(1+k\ks)k\ell^2 + p(1+k\ks)\ell + p^2\FL{\frac{k\ell}{p}} \pmod{2p^2}.
\nonumber             
\end{align}
This congruence holds mod $p^2$ since
$$
\ks(k\ell - \overline{k\ell})^2 \equiv 0 \pmod{p^2}.
$$
It remains to show that the congruence \eqn{congy} holds mod $2$.
Mod $2$ the congruence reduces to
$$
\overline{k\ell} + \ell \equiv k\ell + \ell + \FL{\frac{k\ell}{p}} \pmod{2},
$$
which is true since when $x$ is an integer and $p$ is a positive odd integer 
\beq
x = p \FL{\frac{x}{p}} + \overline{x},\quad\mbox{and}\quad
\FL{\frac{x}{p}} \equiv x + \overline{x} \pmod{2}.
\mylabel{eq:xpcong}
\eeq
Thus we have shown that Theorem \thm{mainthm}
holds when $A$ is one of Rademacher's generators for $\Gamma_0(p)$.

In the next part of the proof we show that Theorem \thm{mainthm} holds when $A$ is the inverse
of any of Rademacher's generators.  The result clearly holds for $A=T^{-1}$. Let
$1\le k \le p-1$. We find that
$$
V_k^{-1} 
=
\begin{pmatrix}
k &  1 \\
-k\ks - 1 & -\ks
\end{pmatrix}
=
-\begin{pmatrix}
 -k &  -1 \\
 k\ks + 1 & \ks
 \end{pmatrix}
= - V_{\ks}.
$$
We know that
$$
\stroke{\Fell{1}{\ell}{p}{z}}{V_{\ks}}{1} = \mu(V_{\ks},\ell)\,
\Fell{1}{\overline{\ks\ell}}{p}{z}.
$$
Hence
\begin{align*}
&\stroke{\Fell{1}{\ell}{p}{z}}{V_k^{-1}}{1} = 
\stroke{\Fell{1}{\ell}{p}{z}}{-V_{\ks}}{1} \\
&\quad =-\stroke{\Fell{1}{\ell}{p}{z} }{V_{\ks}}{1}
=-\mu(V_{\ks},\ell)\,
\Fell{1}{\overline{\ks\ell}}{p}{z}\\
&\quad=-\mu(V_{\ks},\ell)\,
\Fell{1}{\overline{-\ks\ell}}{p}{z}\\
&\quad= 
\exp\left(\frac{3\pi i (1+k\ks) \ks \ell^2}{p^2}\right) \,
(-1)^{\frac{(1+k\ks)\ell}{p}} \, (-1)^{\FL{\frac{\ks\ell}{p}}+1}\,
\Fell{1}{\overline{-\ks\ell}}{p}{z}\\
&\quad= 
\exp\left(\frac{3\pi i (1+k\ks) \ks \ell^2}{p^2}\right) \,
(-1)^{\frac{(1+k\ks)\ell}{p}} \, (-1)^{\FL{\frac{-\ks\ell}{p}}}\,
\Fell{1}{\overline{-\ks\ell}}{p}{z}\\
&\hskip3in
\mbox{(since $\FL{x}+1=-\FL{-x}$ when $x\not\in\mathbb{Z}$)}
\\
&\quad = \mu(V_{k}^{-1},z) \,
\Fell{1}{\overline{-\ks\ell}}{p}{z},
\end{align*}
and Theorem \thm{mainthm} holds for $A=V_{k}^{-1}$.

Finally we need to show that if Theorem \thm{mainthm} holds for
$$
A=\AMat,\qquad B=\BMat,
$$
then it also holds for 
$$
AB = \begin{pmatrix}
aa' + bc' & ab' + bd' \\
ca' + dc' & cb' + dd'
\end{pmatrix}.
$$
We must show that
$$  
\stroke{\Fell{1}{\ell}{p}{z}}{AB}{1} = \mu(AB,\ell)\,
\Fell{1}{\overline{(cb'+dd')\ell}}{p}{z}.
$$
Now
\begin{align*}
& \stroke{\Fell{1}{\ell}{p}{z}}{AB}{1} 
= \stroke{\left(\stroke{\Fell{1}{\ell}{p}{z}}{A}{1}\right)}{B}{1}\\
&\quad = \mu(A,\ell)\,\stroke{\Fell{1}{\overline{d\ell}}{p}{z}}{B}{1}
\qquad\mbox{(since Theorem \thm{mainthm} holds for the matrix $A$)}\\
&\quad = \mu(B,\overline{d\ell})\,\mu(A,\ell)\,\Fell{1}{\overline{dd'\ell}}{p}{z}
= \mu(A,\ell)\,\mu(B,\overline{d\ell})\,\Fell{1}{\overline{(cb'+dd')\ell}}{p}{z},
\end{align*}
since $c\equiv0\pmod{p}$ and Theorem \thm{mainthm} holds for the matrix $B$.         
This mean we need only verify that
$$
\mu(A,\ell)\,\mu(B,\overline{d\ell}) = \mu(AB, \ell).
$$
This is equivalent to showing that
\begin{align}
&3(ca' + dc')(cb' + dd')\ell^2 + p\ell(ca' + dc') + p^2\FL{\frac{(cb'+dd')\ell}{p}} 
\mylabel{eq:multcong}
\\
&\equiv
3cd\ell^2 + pc\ell + p^2\FL{\frac{d\ell}{p}} +
3c'd'(\overline{d\ell})^2 + pc'\overline{d\ell} + p^2\FL{\frac{d'\overline{d\ell}}{p}} 
\pmod{2p^2}.
\nonumber
\end{align}
This can be easily verified mod $p^2$ using the congruences $c\equiv c'\equiv 0\pmod{p}$
and $a'd'\equiv 1\pmod{p}$. It remains to verify that the congruence holds mod $2$; i.e.
\begin{align*}
&(ca' + dc')(cb' + dd')\ell + \ell(ca' + dc') +  cb'\ell +\FL{\frac{dd'\ell}{p}} 
\\
&\equiv
cd\ell + c\ell + \FL{\frac{d\ell}{p}} +
c'd'(\overline{d\ell}) + c'\overline{d\ell} + \FL{\frac{d'\overline{d\ell}}{p}} 
\pmod{2}.
\end{align*}
Using \eqn{xpcong} we see that this is equivalent to showing
\begin{align*}
&(ca' + dc')(cb' + dd'+1)\ell +  cb'\ell +dd'\ell +\overline{dd'\ell} 
\\
&\equiv
cd\ell + c\ell + d\ell + \overline{d\ell}
+ c'd'(\overline{d\ell}) + c'\overline{d\ell} + d'\overline{d\ell} + \overline{d'd\ell} 
\pmod{2},
\end{align*}
or
\begin{align*}
&((ca' + dc')(cb' + dd'+1) +  cb' +dd')\ell
\\
&\equiv
((c+1)(d+1)-1)\ell + (c'+1)(d'+1)\overline{d\ell}\pmod{2},
\end{align*}
or
$$
(ca' + dc')(cb' + dd'+1) +  cb' +dd' \equiv 1 \pmod{2},
$$
since at least one of $c$, $d$ is odd, and at least one of $c'$, $d'$ is odd.
But
\begin{align*}
&(ca' + dc')(cb' + dd'+1) +  cb' +dd' \\
&\equiv c(a'b'+a'+b') + d(c'd'+c'+d') + dc (a'd'+c'b') \pmod{2}\\
&\equiv c + d + dc \equiv 1 \pmod{2},
\end{align*}
since at least one of $a'$, $b'$ is odd and $a'b'+a'+b\equiv 1 \pmod{2}$,
similarly $c'd'+c'+d'\equiv1\pmod{2}$, $a'd'+c'b'\equiv1\pmod{2}$, and
at least one $c$, $d$ is odd.  Thus \eqn{multcong} holds mod $2$, mod $p^2$ and
hence mod $2p^2$.

We have shown that Theorem \thm{mainthm} holds for generators of $\Gamma_0(p)$, inverses
of generators, products of generators and hence for all matrices in $\Gamma_0(p)$,
which completes the proof.

\subsection{Proof of Corollary \corol{F2trans}}   
\mylabel{subsec:pfcorF2}
Let 
$$
A = \AMat \in \Gamma_0(p^2) \cap \Gamma_1(p),\quad
\mbox{and recall}\quad
S = \SMat.
$$
From Theorem \thm{Ftrans} we have
$$
\Fell{2}{\ell}{p}{z} = -i \,\stroke{\Fell{1}{\ell}{p}{z}}{S}{1}.
$$
We let
$$
P = \begin{pmatrix} p^2 & 0 \\ 0 & 1\end{pmatrix},
$$
and find that
$$
S\, P\, A = B \, S\, P,
$$
where
$$
B = \begin{pmatrix} d & - c/p^2 \\ -p^2b & a\end{pmatrix}
\in \Gamma_0(p^2) \cap \Gamma_1(p).
$$
Therefore
\begin{align*}
\stroke{\Lpar{\stroke{\Fell{2}{\ell}{p}{z}}{P}{1}}}{A}{1} 
& = -i \, \stroke{\Fell{1}{\ell}{p}{z}}{SPA}{1}\\
& = -i \, \stroke{\Fell{1}{\ell}{p}{z}}{BSP}{1}\\
& = -i \, \stroke{\Fell{1}{\ell}{p}{z}}{SP}{1}
\qquad \mbox{(by \eqn{Ftrans})}\\
& = -\stroke{\Fell{2}{\ell}{p}{z}}{P}{1},
\end{align*}
and we have \eqn{F2trans}.

\section{Extending Ramanujan's Dyson rank function identity}
\mylabel{sec:EXTRAM}

Equation \eqn{Ramid5} is Ramanujan's identity for the $5$-dissection of
$R(\zeta_5,q)$. In equation \eqn{Ramid5V2} we showed how this identity
could be written in terms of generalized eta-functions. 
In this section we show that there is an analogous 
result for the $p$-dissection of $R(\zeta_p,q)$ when $p$ is any prime 
greater than $3$. 

We assume $p>3$ is prime, and define
\begin{align}      
&\Jpz{z}
\mylabel{eq:Jdef}\\
&= \eta(p^2 z) \,
\left(\Nell{1}{p}{z} - 
2\,\chi_{12}(p) \,
\sum_{\ell=1}^{\frac{1}{2}(p-1)} (-1)^{\ell}
\sin\Lpar{\frac{6\ell\pi}{p}} \,
\left(
\Mell{\ell}{p}{p^2z} + \varepsilon_2\Lpar{\frac{\ell}{p};p^2z}
\right)\right),
\nonumber          
\end{align}
where $\chi_{12}(n)$ is defined in \eqn{chi12}.
Using \eqn{Macid}, \eqn{RNacid} we find that
$$
\eta(p^2 z) \, \Rpz{z} 
=
\sin\Lpar{\frac{\pi}{p}} \, \Jpz{z}.
$$
Thus we rewrite one of our main results, Theorem \thm{mainJp0}, in 
the  equivalent form:
\begin{theorem}
\mylabel{thm:mainJp}
Let $p > 3$ be prime. Then the function $\Jpz{z}$, defined in \eqn{Jdef},
is a weakly holomorphic modular form of weight $1$ on the group
$\Gamma_0(p^2) \cap \Gamma_1(p)$.
\end{theorem}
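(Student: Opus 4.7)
The plan is to realise $\Jpz{z}$ as the holomorphic part of a weight-$1$ harmonic Maass form that is already known to be modular on $\Gamma_0(p^2)\cap\Gamma_1(p)$, and then show that its non-holomorphic completion cancels automatically via a theta-series identity. Substituting \eqn{G1acdef}--\eqn{G2acdef} into \eqn{Jdef} (and using $\Fell{2}{\ell}{p}{p^2z}=\eta(p^2z)\Gell{2}{\ell}{p}{p^2z}$) gives
\begin{equation*}
\Jpz{z}=\eta(p^2z)\Gell{1}{1}{p}{z}-2\chi_{12}(p)\sum_{\ell=1}^{(p-1)/2}(-1)^{\ell}\sin\Lpar{\tfrac{6\ell\pi}{p}}\Fell{2}{\ell}{p}{p^2z}+\eta(p^2z)\,\mathcal{E}(z),
\end{equation*}
where
\begin{equation*}
\mathcal{E}(z)=T_1\Lpar{\tfrac{1}{p};z}-2\chi_{12}(p)\sum_{\ell=1}^{(p-1)/2}(-1)^{\ell}\sin\Lpar{\tfrac{6\ell\pi}{p}}T_2\Lpar{\tfrac{\ell}{p};p^2z}
\end{equation*}
collects the non-holomorphic period integrals from \eqn{T1def} and \eqn{T2def}.

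The main obstacle is the theta-function identity
\begin{equation*}
\THA{1}{p}{\tau}+\tfrac{2}{\sqrt{3}}\chi_{12}(p)\sum_{\ell=1}^{(p-1)/2}(-1)^{\ell}\sin\Lpar{\tfrac{6\ell\pi}{p}}\Theta_1(0,-\ell,p;p^{2}\tau)=0,
\end{equation*}
which, after the substitution $\tau\mapsto p^{2}\tau$ inside each $T_{2}(\ell/p;p^{2}z)$, forces $\mathcal{E}(z)\equiv 0$. To prove it, expand both sides as Gaussian series via \eqn{Theta1abcid} and \eqn{Theta1acdef}, re-parametrize the double sum in $\sum_\ell\Theta_1(0,-\ell,p;p^{2}\tau)$ by $M=2pn+p+6\ell$ so that the exponent becomes $\exp(\pi i\tau M^{2}/12)$, matching that of $\THA{1}{p}{\tau}$, and split according to $n\pmod{3}$. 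The class $n\equiv 1\pmod{3}$ vanishes since $\sin(\pi(2n+1)/3)=0$; of the other two, one produces the $M\equiv 1\pmod{6}$ contributions to $\THA{1}{p}{\tau}$ directly, while the other produces $M\equiv 5\pmod{6}$ contributions whose exponents match—via the tautological symmetry $M\mapsto -M$ built into \eqn{Theta1acdef}, which sums over all $n\in\mathbb{Z}$—the remaining terms of $\THA{1}{p}{\tau}$. Which of the two surviving classes plays which role depends on $p\pmod{6}$, and the sign $\chi_{12}(p)$ is precisely the correction needed to align $(-1)^{n}\sin(\pi(2n+1)/3)$ with $(-1)^{(M-1)/6}\sin(\pi M/p)$ uniformly in $p\pmod{12}$. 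Bookkeeping these three simultaneous signs is the technical heart of the proof.

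Once $\mathcal{E}\equiv 0$, it remains to check that
\begin{equation*}
\mathcal{K}(z):=\eta(p^2z)\Gell{1}{1}{p}{z}-2\chi_{12}(p)\sum_{\ell=1}^{(p-1)/2}(-1)^{\ell}\sin\Lpar{\tfrac{6\ell\pi}{p}}\Fell{2}{\ell}{p}{p^2z}
\end{equation*}
is weight-$1$ modular on $\Gamma_0(p^2)\cap\Gamma_1(p)$. Each $\Fell{2}{\ell}{p}{p^2z}$ is such by Corollary~\corol{F2trans}. For the first term, Corollary~\corol{Ftrans} gives $\stroke{\Gell{1}{1}{p}{z}}{A}{1/2}=\nu_\eta(A)^{-1}\Gell{1}{1}{p}{z}$, while a direct conjugation yields $\stroke{\eta(p^2z)}{A}{1/2}=\nu_\eta(\widetilde A)\,\eta(p^2z)$ with $\widetilde A=\bigl(\begin{smallmatrix} a & p^{2}b\\ c/p^{2} & d\end{smallmatrix}\bigr)\in\SL_2(\mathbb{Z})$, so the product has multiplier $\nu_\eta(\widetilde A)/\nu_\eta(A)$. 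This ratio equals $1$ on $\Gamma_0(p^2)\cap\Gamma_1(p)$: the case $A=T$ reduces to $p^{2}\equiv 1\pmod{24}$ (precisely where the hypothesis $p>3$ enters), and the general case follows from Dedekind-sum reciprocity comparing $s(d,c)$ and $s(d,c/p^{2})$ once $c\equiv 0\pmod{p^{2}}$ and $d\equiv 1\pmod{p}$ are imposed. Weak holomorphy at each cusp of $\Gamma_0(p^2)\cap\Gamma_1(p)$ then follows from the bounded-order $q$-expansions of $R(\zeta_p,q)$ and of each $q^{\frac{a}{2}(p-3a)-p^{2}/24}\Phi_{p,a}(q^{p})$, transformed via standard cusp representatives, together with the holomorphy of $\eta(p^{2}z)$ on $\mathfrak{h}$.
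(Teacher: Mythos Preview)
Your overall strategy matches the paper's proof closely: both hinge on the theta identity (the paper's Proposition~\propo{theta1id}) to show that the non-holomorphic period integrals cancel, and both invoke Corollaries~\corol{Ftrans} and~\corol{F2trans} for the transformation law.  Your multiplier computation $\nu_\eta(\widetilde A)/\nu_\eta(A)=1$ is equivalent to the paper's appeal to the well-known fact that $\eta(p^2z)/\eta(z)$ is a modular function on $\Gamma_0(p^2)$, just unpacked differently.

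The genuine gap is in your final sentence on weak holomorphy at the cusps.  You write that it ``follows from the bounded-order $q$-expansions of $R(\zeta_p,q)$ and of each $q^{\frac{a}{2}(p-3a)-p^2/24}\Phi_{p,a}(q^p)$, transformed via standard cusp representatives.''  But these are mock modular objects: hitting $\Jpz{z}$ with $\stroke{\cdot}{A}{1}$ for $A\in\SL_2(\mathbb{Z})$ sends each $\mathcal{G}$-piece to another element of the vector-valued family $\mathfrak{V}_p$, which again decomposes as a holomorphic part plus a \emph{new} period integral.  Your identity $\mathcal{E}(z)\equiv 0$ is the cancellation of these integrals at $i\infty$ only; it does not automatically propagate to the other cusps, because the transformed integrands are different theta functions and the accompanying holomorphic weights (coming from $\eta(p^2z)/\eta(z)$) change as well.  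The paper handles this with an explicit case analysis (three cases for $c\pmod{p^2}$) identifying the holomorphic part of each summand at every cusp, together with a linear-independence lemma (Lemma~\lem{holo}) which forces the surviving period integrals to vanish since the total $\stroke{\Jpz{z}}{A}{1}$ is known to be holomorphic.  Without something of this kind, you have not actually established meromorphy at the cusps away from $i\infty$.
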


This theorem leads to our analogue of Ramanujan's identity
\eqn{Ramid5} or \eqn{Ramid5V2}.
\begin{cor}
\mylabel{cor:Rpz}
Let $p > 3$ be prime. Then the function
\begin{align}      
\Rpz{z} 
&= q^{-\frac{1}{24}} R(\zeta_p,q) 
\mylabel{eq:Rpdefv2}\\
&\quad 
- 
4\,q^{-\frac{1}{24}}\,\chi_{12}(p) \,
\sum_{a=1}^{\frac{1}{2}(p-1)} (-1)^a 
\sin\Lpar{\frac{\pi}{p}} \,
\sin\Lpar{\frac{6a\pi}{p}} \,
q^{\frac{a}{2}(p-3a)-\frac{1}{24}(p^2-1)}\,\Phi_{p,a}(q^p)
\nonumber               
\end{align}      
is a modular form of weight $\frac{1}{2}$ on $\Gamma_0(p^2) \cap \Gamma_1(p)$
with multiplier. In particular
\beq
\stroke{\mathcal{R}_p(z)}{A}{\tfrac{1}{2}} = 
\frac{1}{\nu_\eta({}^{p^2}A)} \, \Rpz{z},
\mylabel{eq:Rptrans}
\eeq
for $A=\AMat\in\Gamma_0(p^2) \cap \Gamma_1(p)$, and where
$$
{}^{p^2}A = \AppMat.
$$
\end{cor}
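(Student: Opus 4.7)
The plan is to derive Corollary \corol{Rpz} directly from Theorem \thm{mainJp0} by dividing the weight-one weakly holomorphic modular form $\eta(p^2z)\,\mathcal{R}_p(z)$ by the factor $\eta(p^2z)$, which is nowhere zero on $\mathfrak{h}$. The whole statement then reduces to a matrix-level computation of how $\eta(p^2z)$ transforms under $\Gamma_0(p^2)\cap\Gamma_1(p)$.

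First I would verify that the expression \eqn{Rpdefv2} agrees with the original definition \eqn{Rpdef}. This is a quick trigonometric identity: the bracketed combination of roots of unity in \eqn{Rpdef} equals $2\cos(6a\pi/p+\pi+\pi/p)-2\cos(6a\pi/p+\pi-\pi/p)$, and applying $\cos A-\cos B=-2\sin(\tfrac{A+B}{2})\sin(\tfrac{A-B}{2})$ collapses this to $4\sin(\pi/p)\sin(6a\pi/p)$, matching the coefficient in \eqn{Rpdefv2}.

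By Theorem \thm{mainJp0}, the product $F(z):=\eta(p^2z)\,\mathcal{R}_p(z)$ satisfies $\stroke{F}{A}{1}=F$ for every $A=\AMat\in\Gamma_0(p^2)\cap\Gamma_1(p)$ (with trivial multiplier). To transfer this to $\mathcal{R}_p$ I compute $\stroke{\eta(p^2z)}{A}{\tfrac{1}{2}}$ using the matrix identity $P\,A=({}^{p^2}A)\,P$, where $P=\PPMat$. This identity is valid because $c\equiv 0\pmod{p^2}$ forces $c/p^2\in\mathbb{Z}$, so ${}^{p^2}A=\AppMat\in\SLZ$. The standard eta-multiplier formula \eqn{etamult} then gives
\[
\eta(p^2\cdot Az)=\eta\bigl({}^{p^2}A\cdot p^2z\bigr)=\nu_\eta({}^{p^2}A)\sqrt{cz+d}\,\eta(p^2z),
\]
so that $\stroke{\eta(p^2z)}{A}{\tfrac{1}{2}}=\nu_\eta({}^{p^2}A)\,\eta(p^2z)$.

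Finally, using multiplicativity of the stroke operator, $\stroke{(fg)}{A}{k+\ell}=\stroke{f}{A}{k}\cdot\stroke{g}{A}{\ell}$, applied with $f=\eta(p^2z)$, $g=\mathcal{R}_p(z)$, and $k=\ell=\tfrac{1}{2}$, one obtains
\[
\eta(p^2z)\,\mathcal{R}_p(z)\;=\;\stroke{F}{A}{1}\;=\;\nu_\eta({}^{p^2}A)\,\eta(p^2z)\cdot\stroke{\mathcal{R}_p(z)}{A}{\tfrac{1}{2}},
\]
and dividing by the nonvanishing factor $\eta(p^2z)$ yields \eqn{Rptrans}. Holomorphy of $\mathcal{R}_p$ on $\mathfrak{h}$ is automatic; at the cusps, the behavior is inherited (in the weakly holomorphic sense) from $F$ together with the known cusp orders of $\eta(p^2z)$. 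There is no real obstacle---the corollary is essentially bookkeeping once Theorem \thm{mainJp0} is granted, with the one substantive point being the matrix decomposition $P\,A=({}^{p^2}A)\,P$ which is exactly what brings out the factor $\nu_\eta({}^{p^2}A)$.
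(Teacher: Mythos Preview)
Your proposal is correct and follows exactly the approach the paper indicates in its remark after the corollary: equation \eqn{Rpdefv2} is the trigonometric rewriting of \eqn{Rpdef}, and \eqn{Rptrans} follows from Theorem~\thm{mainJp} (equivalently Theorem~\thm{mainJp0}) together with the eta-multiplier formula \eqn{etamult} via the conjugation $P\,A=({}^{p^2}A)\,P$. You have simply spelled out what the paper leaves as a one-line remark.
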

\begin{remark}
Equation \eqn{Rpdefv2} follows easily
from the definition of $\Rpz{z}$, which is given in \eqn{Rpdef}. 
Equation \eqn{Rptrans} follows from Theorem \thm{mainJp} 
and \eqn{etamult}.
\end{remark}

This result will follow from Corollary \corol{Ftrans} and
\begin{prop}
\mylabel{propo:theta1id}
Let $p > 3$ be prime. Then 
\beq
\THA{1}{p}{z}
=
-\frac{2}{\sqrt{3}} \,\chi_{12}(p) \,
\sum_{a=1}^{\frac{1}{2}(p-1)} (-1)^a 
\sin\Lpar{\frac{6a\pi}{p}} \,\Theta_1(0,-a,p;p^2z).
\mylabel{eq:theta1id}
\eeq
\end{prop}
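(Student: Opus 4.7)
The plan is to expand both sides of \eqn{theta1id} into Gaussian sums of the form $\sum_k c_k\, k\, \sin(\pi k/p)\,\exp(\pi i z k^2/12)$ indexed by integers $k$ coprime to $6p$, and verify that the coefficients match. The left-hand side $\THA{1}{p}{z}$ is already in essentially this form with $k=6n+1$. For the right-hand side I would use the alternative representation \eqn{Theta1abcid} of $\Theta_1(0,-a,p;z)$; after replacing $z$ by $p^2z$ and setting $k=p(2n+1)+6a$, a direct simplification gives
\beqs
\Theta_1(0,-a,p;p^2z) = \sum_{n\in\mathbb{Z}}(-1)^n\,k\,\sin\Lpar{\tfrac{\pi(2n+1)}{3}}\,\exp\Lpar{\tfrac{\pi i z k^2}{12}}.
\eeqs

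Next I would extend the outer $a$-sum from $1\le a\le\tfrac{1}{2}(p-1)$ to $1\le a\le p-1$, gaining a factor $\tfrac{1}{2}$. The summand is invariant under $a\to p-a$: the prefactor $(-1)^a\sin(6a\pi/p)$ obviously is, while $\Theta_1(0,-(p-a),p;\cdot)=\Theta_1(0,-a,p;\cdot)$ follows from two index-shift identities, $\Theta_1(0,b+p,p;z)=-\Theta_1(0,b,p;z)$ (via $n\to n-3$) and $\Theta_1(0,-b,p;z)=-\Theta_1(0,b,p;z)$ (via $n\to -n-1$). Over the extended range, $(a,n)\mapsto k=p(2n+1)+6a$ is a bijection onto odd integers coprime to $p$; the terms with $3\mid 2n+1$ vanish through $\sin(\pi(2n+1)/3)=0$, and this coincides with $3\mid k$ since $3\nmid 6a$. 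So the effective index set is $\gcd(k,6p)=1$. Symmetrizing the left-hand side analogously via $n\to -n$ and the easy identities $\chi_{12}(6n\pm 1)=(-1)^n$ yields
\beqs
\THA{1}{p}{z} = \tfrac{1}{2}\sum_{\gcd(k,6p)=1} \chi_{12}(k)\,k\,\sin(\pi k/p)\,\exp\Lpar{\tfrac{\pi i z k^2}{12}}.
\eeqs

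Matching coefficients of $k\sin(\pi k/p)\exp(\pi i z k^2/12)$ then reduces to a character identity. Using $6a\pi/p=\pi k/p-(2n+1)\pi$ to get $\sin(6a\pi/p)=-\sin(\pi k/p)$, and the direct identification $(-1)^n\sin(\pi(2n+1)/3)=\tfrac{\sqrt{3}}{2}\chi_{12}(2n+1)$ valid whenever $3\nmid 2n+1$, the various constants cancel cleanly and the required identity collapses to
\beqs
\chi_{12}(k)=(-1)^a\,\chi_{12}(p)\,\chi_{12}(2n+1),\qquad k=p(2n+1)+6a.
\eeqs
This follows from the multiplicativity of $\chi_{12}$ on integers coprime to $12$, together with the one-line check $\chi_{12}(m+6)=-\chi_{12}(m)$ for $\gcd(m,12)=1$, read off from the values $\chi_{12}(1)=\chi_{12}(11)=1$ and $\chi_{12}(5)=\chi_{12}(7)=-1$. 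The main obstacle is the careful bookkeeping in the reparameterization step — tracking sign factors, verifying the $(a,n)\leftrightarrow k$ bijection, and confirming the vanishing of the $3\mid k$ terms — so that everything reduces to this simple mod-$12$ identity at the very end.
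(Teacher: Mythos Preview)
Your argument is correct. Both sides are rewritten as sums over integers $k$ coprime to $6p$; the bijection $(a,n)\mapsto k=p(2n+1)+6a$ is valid, the symmetrization under $a\mapsto p-a$ is justified by the two index-shift identities you state, the trigonometric simplifications check out, and the final reduction to $\chi_{12}(k)=(-1)^a\chi_{12}(p)\chi_{12}(2n+1)$ follows from $\chi_{12}(m+6)=-\chi_{12}(m)$ as you say.

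The paper's own proof is also a re-indexing argument but is organized differently. It works directly from the finite-sum definition of $\Theta_1(0,-a,p;z)$ in terms of $\ttwid(\cdot,12p^2;\cdot)$, splits into the two cases $p\equiv\pm1\pmod6$, and in each case writes down an explicit parametrization of the integers $n$ with $6n+1\not\equiv0\pmod p$ (of the shape $n=p(2pm+\ell_1)\pm a+p_1$ with $p_1=\tfrac{p\mp1}6$); the sign $\chi_{12}(p)$ then appears as $(-1)^{p_1}$. Your approach uses the alternative representation \eqn{Theta1abcid} instead, replaces the case split by the uniform identity $(-1)^n\sin(\pi(2n+1)/3)=\tfrac{\sqrt3}{2}\chi_{12}(2n+1)$, and packages all parity bookkeeping into the single character computation at the end. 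The two proofs have the same underlying content, but yours avoids the $p\pmod6$ case analysis and is somewhat cleaner; the paper's version has the minor advantage of staying closer to the $\ttwid$-definition that is used elsewhere.
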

\begin{proof}
From \eqn{Theta1acdef} we recall
$$                      
\THA{1}{p}{z} 
= \sum_{n=-\infty}^\infty (-1)^n (6n+1) \sin\Lpar{\frac{\pi (6n+1)}{p}}
\exp\Lpar{3\pi iz\Lpar{n + \frac{1}{6}}^2}.
$$                     
We assume $p >3$ is prime and consider two cases.
\subsubsection*{Case 1} $p\equiv 1 \pmod{6}$. We let $p_1 = \frac{1}{6}(p-1)$
so that $6p_1+1=p$. We note that each integer $n$ satisfying $6n+1\not\equiv0
\pmod{p}$ can be written uniquely as
\begin{align*}
&(i) \qquad n=p(2pm+\ell_1) + a  + p_1, \quad
     \mbox{where $1\le a \le \frac{1}{2}(p-1)$, $0 \le \ell_1 < 2p$, and 
           $m \in \mathbb{Z}$,}\\
&\quad\mbox{or}\\
&(ii) \qquad n=p(-2pm-\ell_1) - a  + p_1, \quad
     \mbox{where $1\le a \le \frac{1}{2}(p-1)$, $1 \le \ell_1 \le 2p$, and 
           $m \in \mathbb{Z}$.}
\end{align*}
If $n=p(2pm+\ell_1) + a  + p_1$, then
\begin{align*}
6n+1 &= 12p^2m + 2p\ell + 6a+p,\qquad\mbox{where $\ell=3\ell_1$ and 
$0\le \ell < 6p$},\\
\sin\Lpar{\frac{\pi (6n+1)}{p}} &= - \sin\Lpar{\frac{6a\pi}{p}},\qquad
\sin\Lpar{\frac{\pi}{3}(2\ell+1)} = \frac{\sqrt{3}}{2}, \quad
(-1)^n = (-1)^{\ell + a + p_1}.
\end{align*}
If $n=p(-2pm-\ell_1) - a  + p_1$, then
\begin{align*}
6n+1 &= -(12p^2m + 2p\ell + 6a+p),\qquad\mbox{where $\ell=3\ell_1-1$ and 
$0<\ell \le 6p-1$},\\
\sin\Lpar{\frac{\pi (6n+1)}{p}} &= \sin\Lpar{\frac{6a\pi}{p}},\qquad
\sin\Lpar{\frac{\pi}{3}(2\ell+1)} = -\frac{\sqrt{3}}{2},\qquad
(-1)^n = -(-1)^{\ell + a + p_1}.
\end{align*}
Hence we have
\begin{align*}
&\THA{1}{p}{z} 
= \sum_{\substack{n=-\infty \\ 6n+1\not\equiv 0 \pmod{p}}}^\infty
                   (-1)^n (6n+1) \sin\Lpar{\frac{\pi (6n+1)}{p}}
\exp\Lpar{3\pi iz\Lpar{n + \frac{1}{6}}^2}\\
&=
-\sum_{a=1}^{\frac{1}{2}(p-1)}
\sum_{\ell=0}^{6p-1}
(-1)^{\ell + a + p_1} \, \sin\Lpar{\frac{\pi}{3}(2\ell+1)} \, \frac{2}{\sqrt{3}}
\, \sin\Lpar{\frac{6a\pi}{p}} \\
&\qquad\qquad \times
\sum_{m=-\infty}^\infty (12p^2m + 2\ell p + 6a + p) \,
\exp\Lpar{\frac{\pi iz}{12}(12p^2m + 2\ell p + 6a+p)^2}\\
&=
-\frac{2}{\sqrt{3}} \,\chi_{12}(p) \,
\sum_{a=1}^{\frac{1}{2}(p-1)} (-1)^a 
\sin\Lpar{\frac{6a\pi}{p}} \,\Theta_1(0,-a,p;p^2z),
\end{align*}
since $(-1)^{p_1} = \chi_{12}(p)$.

\subsubsection*{Case 2} $p\equiv -1 \pmod{6}$. We proceed as in Case 1
except this time we let  $p_1 = \frac{1}{6}(p+1)$
so that $6p_1-1=p$, and we find  that
each integer $n$ satisfying $6n+1\not\equiv0 \pmod{p}$ can be written uniquely as
\begin{align*}
&(i) \qquad n=p(2pm+\ell_1) + a  - p_1, \quad
     \mbox{where $1\le a \le \frac{1}{2}(p-1)$, $1 \le \ell_1 \le 2p$, and 
           $m \in \mathbb{Z}$,}\\
&\quad\mbox{or}\\
&(ii) \qquad n=p(-2pm-\ell_1) - a  - p_1, \quad
     \mbox{where $1\le a \le \frac{1}{2}(p-1)$, $0 \le \ell_1 < 2p$, and 
           $m \in \mathbb{Z}$.}
\end{align*}
The result \eqn{theta1id} follows as in Case 1.
\end{proof}

\subsection{Proof of Theorem \thm{mainJp} -- Part 1 -- Transformations}
\mylabel{subsec:pfmainJp1}

First we show that
\beq
\Jpz{z} = \JSpz{z},
\mylabel{eq:JJSid}
\eeq
where $\Jpz{z}$ is defined in \eqn{Jdef} and
\begin{equation}   
\JSpz{z} 
= \frac{\eta(p^2 z)}{\eta(z)} \, \Lpar{\Fell{1}{1}{p}{z}
 - 
2\,\chi_{12}(p) \,
\sum_{\ell=1}^{\frac{1}{2}(p-1)} (-1)^{\ell} 
\sin\Lpar{\frac{6\ell\pi}{p}} \,
\Fell{2}{\ell}{p}{p^2 z}}.
\mylabel{eq:JSdef}
\end{equation}   
From \eqn{T2def} we have
$$
T_2\Lpar{\frac{\ell}{p};p^2 z} = 
\frac{i}{3}\, 
\int_{-\zcon}^{i\infty} 
\frac{\Theta_1(0,-\ell,c;p^2\tau)}{\sqrt{-i(\tau + z)}}\,d\tau.
$$
Therefore using \eqn{T1def} and Proposition \propo{theta1id} we have
\begin{align*}
T_1\Lpar{\frac{1}{p};z} & = \frac{-i}{\sqrt{3}} 
\int_{-\zcon}^{i\infty}
\frac{\THA{1}{p}{\tau}}{\sqrt{-i(\tau + z)}}\,d\tau\\
&= \frac{2i}{3}\, \,\chi_{12}(p) \,
\sum_{\ell=1}^{\frac{1}{2}(p-1)} (-1)^{\ell} 
\sin\Lpar{\frac{6\ell\pi}{p}} \,
\int_{-\zcon}^{i\infty} 
\frac{\Theta_1(0,-\ell,c;p^2\tau)}{\sqrt{-i(\tau + z)}}\,d\tau,
\end{align*}
and
\beq
T_1\Lpar{\frac{1}{p};z}  =
2 \,\chi_{12}(p) \,
\sum_{\ell=1}^{\frac{1}{2}(p-1)} (-1)^{\ell}
\sin\Lpar{\frac{6\ell\pi}{p}} \,
T_2\Lpar{\frac{\ell}{p};p^2 z}.
\mylabel{eq:T1T2id}
\eeq
From \eqn{JSdef}, \eqn{G1acdef}, \eqn{G2acdef}
we have
\begin{align*}
&\JSpz{z}
= 
\eta(p^2 z) \, \left(\Gell{1}{1}{p}{z}
 - 
2\,\chi_{12}(p) \,
\sum_{\ell=1}^{\frac{1}{2}(p-1)} (-1)^{\ell}
\sin\Lpar{\frac{6\ell\pi}{p}} \,
\Gell{2}{\ell}{p}{p^2 z} \right)\\
&= 
\eta(p^2 z) \, \left(
\Nell{1}{p}{z} - T_1\Lpar{\frac{1}{p};z} \right.\\
&\qquad \left.- 2\, \chi_{12}(p) \,
\sum_{\ell=1}^{\frac{1}{2}(p-1)} (-1)^{\ell} 
\sin\Lpar{\frac{6\ell\pi}{p}} \,
\left(
\Mell{\ell}{p}{p^2z} + \varepsilon_2\Lpar{\frac{\ell}{p};p^2z}
               - T_2\Lpar{\frac{\ell}{p};p^2z} 
\right)\right)\\
&= \Jpz{z},
\end{align*}
by \eqn{T1T2id}.  By Corollaries \corol{Ftrans}, \corol{F2trans}, and equation
\eqn{JSdef} we have
$$
\strokevb{\JSpz{z}}{A}{1} = \JSpz{z},
$$
for $A\in\Gamma_0(p^2) \cap \Gamma_1(p)$, using 
the well-known result
that $\frac{\eta(p^2z)}{\eta(z)}$ is a modular function on $\Gamma_0(p^2)$ when
$p>3$ is prime. Hence
$$
\strokevb{\Jpz{z}}{A}{1} = \Jpz{z},
$$
for $A\in\Gamma_0(p^2) \cap \Gamma_1(p)$.               

\subsection{Proof of Theorem \thm{mainJp} -- Part 2 -- Checking Cusp Conditions}
\mylabel{subsec:pfmainJp2}

We note that the function $\Jpz{z}$ is holomorphic on $\mathfrak{h}$.
In section \subsect{pfmainJp1} we showed that
$$
\stroke{\Jpz{z}}{A}{1} = \Jpz{z},\qquad\mbox{for all $A\in\Gamma_0(p^2) \cap \Gamma_1(p)$}.
$$
By \cite[p.125]{Kob} this implies that the function 
$\stroke{\Jpz{z}}{\gA}{1}(z)$ has period $p^2$ for each $\gA\in\SL_2(\mathbb{Z})$. 
Hence the function $\Jpz{z}$ has a Fourier expansion
in powers of $q_{p^2}=\exp(2\pi iz/p^2)$. To complete the proof of 
Theorem \thm{mainJp}
we need to show that this expansion has only finitely many negative powers of 
$q_{p^2}$. 

We need
\begin{lemma}
\mylabel{lem:holo}
Suppose the functions
$$
f_1(z),\,f_2(z), \dots, f_n(z),
$$
are holomorphic on an open connected set $\mathfrak{D}$, and linearly independent over $\mathbb{C}$.
Suppose the functions
$$
G_1(z),\,G_2(z), \dots, G_n(z),
$$
are holomorphic on $-\overline{\mathfrak{D}}=\{-\overline{d}\,:\,d\in \mathfrak{D}\}$, and
$$
\sum_{j=1}^n f_j(z)\,G_j(-\zcon) = 0
$$
on $\mathfrak{D}$. Then 
$$
G_1(z)=G_2(z)= \cdots = G_n(z) = 0
$$
on $-\overline{\mathfrak{D}}$.
\end{lemma}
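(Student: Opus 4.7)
The plan is to exploit the antiholomorphic dependence on $z$ by Taylor-expanding each $G_j(-\overline{z})$ about a base point and separating the holomorphic variable from the antiholomorphic one. Throughout I will treat $z$ and $\overline{z}$ as formally independent, which is legitimate here since both sides of the given identity are real-analytic.

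Fix $z_0 \in \mathfrak{D}$ and set $w_0 := -\overline{z_0} \in -\overline{\mathfrak{D}}$. Expanding each $G_j$ in its Taylor series about $w_0$ gives, for $z$ in a small disk around $z_0$,
\begin{equation*}
G_j(-\overline{z}) \;=\; \sum_{k\ge 0} \frac{(-1)^k G_j^{(k)}(w_0)}{k!}\,\bigl(\overline{z-z_0}\bigr)^k .
\end{equation*}
Substituting into the hypothesis $\sum_j f_j(z) G_j(-\overline{z}) = 0$ and interchanging the order of summation (justified by absolute convergence in a small neighborhood) yields
\begin{equation*}
\sum_{k\ge 0} F_k(z)\,\bigl(\overline{z-z_0}\bigr)^k \;=\; 0, \qquad
F_k(z) \;:=\; \frac{(-1)^k}{k!}\sum_{j=1}^n G_j^{(k)}(w_0)\,f_j(z).
\end{equation*}
Each $F_k$ is holomorphic on $\mathfrak{D}$, so near $z_0$ we can write $F_k(z_0+w) = \sum_m a_{k,m} w^m$, reducing the identity to $\sum_{k,m} a_{k,m}\, w^m \overline{w}^k = 0$ on a real neighborhood of $w=0$. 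The monomials $w^m\overline{w}^k$ are linearly independent as real-analytic functions of the underlying real variables, so every $a_{k,m}$ vanishes. Consequently $F_k \equiv 0$ in a neighborhood of $z_0$, and by the identity theorem on the connected open set $\mathfrak{D}$, $F_k \equiv 0$ on all of $\mathfrak{D}$ for each $k \ge 0$.

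Now invoke the hypothesis that $f_1,\dots,f_n$ are linearly independent over $\mathbb{C}$: the identity $\sum_j G_j^{(k)}(w_0)\,f_j(z) = 0$ on $\mathfrak{D}$ forces $G_j^{(k)}(w_0) = 0$ for every $j$ and every $k \ge 0$. Since $G_j$ is holomorphic and all its derivatives vanish at $w_0$, it vanishes in a neighborhood of $w_0$; the identity theorem applied to $G_j$ on the connected open set $-\overline{\mathfrak{D}}$ then gives $G_j \equiv 0$ on $-\overline{\mathfrak{D}}$ for each $j$, as required.

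The only substantive point is the step that converts the mixed power-series identity $\sum_{k,m} a_{k,m} w^m \overline{w}^k = 0$ into the vanishing of each coefficient; this is the linear independence of $\{w^m\overline{w}^k\}$ as functions of $(\mathrm{Re}\,w,\mathrm{Im}\,w)$. One can sidestep it by iterating $\partial/\partial\overline{z}$ on the original identity (using $\partial f_j/\partial\overline{z} = 0$) to obtain $\sum_j f_j(z)\,G_j^{(k)}(-\overline{z}) = 0$ on $\mathfrak{D}$ for every $k$, but some care is still needed to pass from this to an identity in a single complex variable, which is precisely what the Taylor-expansion bookkeeping above accomplishes cleanly.
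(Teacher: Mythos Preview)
Your proof is correct, and it follows a genuinely different route from the paper's argument.

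The paper proceeds by induction on $n$: assuming the result for $m$ functions, it divides through by $f_{m+1}$ on the complement of its zero set, writes $\sum_{j=1}^{m} F_j(z)G_j(-\overline{z}) = -G_{m+1}(-\overline{z})$ with $F_j=f_j/f_{m+1}$, and applies $\partial/\partial z$. Since the right side is antiholomorphic, this yields $\sum_{j=1}^{m} F_j'(z)G_j(-\overline{z})=0$; after checking that $F_1',\dots,F_m'$ remain linearly independent, the induction hypothesis kills $G_1,\dots,G_m$ and then $G_{m+1}$.

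Your argument avoids induction entirely by polarizing the identity at a single point: Taylor-expanding the antiholomorphic factors and invoking the linear independence of the mixed monomials $w^m\overline{w}^k$ lets you read off $\sum_j G_j^{(k)}(w_0)f_j(z)\equiv 0$ for every $k$ at once, and then the assumed linear independence of the $f_j$ finishes immediately. This is cleaner and more conceptual; the only price is that you must appeal to real-analyticity (or equivalently to the well-definedness of $\partial^{m+k}/\partial w^m\partial\overline{w}^k$ on the convergent double series), whereas the paper's inductive proof stays entirely inside one-variable complex analysis and never needs to discuss convergence of mixed series. Your closing remark that repeated application of $\partial/\partial\overline{z}$ gives $\sum_j f_j(z)G_j^{(k)}(-\overline{z})=0$ is in fact the common thread between the two approaches; the paper differentiates in $z$ after a normalization, you differentiate in $\overline{z}$ directly.
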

\begin{proof}
We proceed by induction on $n$. The result is clearly true for $n=1$.
Now suppose the result is true for $n=m$ where $m\ge1$ is fixed.
Suppose the functions
$$
f_1(z),\,f_2(z), \dots, f_{m+1}(z),
$$
are holomorphic on an open connected set $\mathfrak{D}$, and linearly independent over $\mathbb{C}$.
Suppose the functions
$$
G_1(z),\,G_2(z), \dots, G_{m+1}(z),
$$
are holomorphic on $-\overline{\mathfrak{D}}=\{-\overline{d}\,:\,d\in \mathfrak{D}\}$, and
$$
\sum_{j=1}^{m+1} f_j(z)\,G_j(-\zcon) = 0
$$
on $\mathfrak{D}$. Let
$$
\mathfrak{D}_1 = \mathfrak{D} \setminus \{z_0\,:\, f_{m+1}(z_0) = 0\}.
$$
Then $\mathfrak{D}_1$ is an open connected set and on $\mathfrak{D}_1$ the functions $F_j(z) = \frac{f_j(z)}{f_{m+1}(z)}$
are holomorphic, linearly independent over $\mathbb{C}$ and on $\mathfrak{D}_1$
\beq
\sum_{j=1}^{m} F_j(z) \, G_j(-\overline{z}) = - G_{m+1}(-\overline{z}).
\mylabel{eq:Gcomb}
\eeq
Since $G_{m+1}(z)$ is holomorphic,
$$
\frac{\partial}{\partial z}\, \sum_{j=1}^{m} F_j(z) \, G_j(-\overline{z}) = 
-\frac{\partial}{\partial z}\, G_{m+1}(-\overline{z}) = 0,
$$
and
\beq
\sum_{j=1}^{m} F_j'(z) \, G_j(-\overline{z}) = 0
\mylabel{eq:Fdcomb}
\eeq
on $\mathfrak{D}_1$. 
We next show that the $F_j'(z)$ are linearly independent over $\mathbb{C}$.
Suppose there are complex numbers $a_1$, $a_2$, \dots, $a_m$ such that
$$
a_1 F_1'(z) + a_2F_2'(z) + \cdots + a_m F_m'(z) = 0 
$$
on $\mathfrak{D}_1$. Then
$$
a_1 F_1(z) + a_2F_2(z) + \cdots + a_m F_m(z) = a_{m+1}
$$
for some constant $a_{m+1}$. But then
$$
a_1 f_1(z) + a_2f_2(z) + \cdots + a_m f_m(z) - a_{m+1}f_{m+1}(z) = 0,
$$
on $\mathfrak{D}_1$ and hence $\mathfrak{D}$. This implies
$$
a_1 = a_2 = \cdots = a_{m} = a_{m+1} = 0
$$
by the linear independence of the $f_j$.
Thus that the $F_j'(z)$ are linearly independent over $\mathbb{C}$.

This together with \eqn{Fdcomb} and the fact that the  $F_j'(z)$ are holomorphic on $\mathfrak{D}_1$, implies 
that
$$
G_1(z)=G_2(z)= \cdots = G_m(z) = 0
$$
on $\mathfrak{D}_1$, 
by the induction hypothesis. By \eqn{Gcomb} we have
$$
G_1(z)=G_2(z)= \cdots = G_m(z) = G_{m+1}(z) = 0
$$
on $\mathfrak{D}_1$ and hence on $\mathfrak{D}$, and the result is true for $n=m+1$ thus completing the induction
proof.
\end{proof}

We define                                    
\begin{align}
\mathfrak{W}_p&:= \left\{\mathcal{F}_1\Lpar{\frac{a}{p};z},\,
             \mathcal{F}_2\Lpar{\frac{a}{p};z}\,:\, 0< a < p\right\}
\mylabel{eq:Wpdef}\\
&\qquad \cup \left\{\mathcal{F}_1(a,b,p;z),\, \mathcal{F}_2(a,b,p;z)\,:\,
0\le a < p\quad\mbox{and}\quad 0<b<p\right\}.
\nonumber
\end{align}

Now let
$$
\gA = \AMat \in \SL_2(\mathbb{Z}),\quad\mbox{so that $\gA(\infty) = a/c$ is a cusp.}
$$
As mentioned above we must show that $\stroke{\Jpz{z}}{\gA}{1}$ expanded
as a series in $q_{p^2}$ has only finitely many terms with negative exponents.
We examine each of the functions
$$
\frac{\eta(p^2 z)}{\eta(z)} \, \Fell{1}{1}{p}{z}, \qquad
\Fell{2}{\ell}{p}{p^2 z} \qquad (1 \le \ell \le \tfrac{1}{2}(p-1)),
$$
which occur on the right side of \eqn{JSdef}.
By Theorem \thm{Ftrans} we have
$$
\stroke{\Fell{1}{1}{p}{z}}{\gA}{1} = \varepsilon_A \, \mathcal{F}_A(z),
$$
for some $\mathcal{F}_A \in \mathfrak{W}_p$ and some root of unity $\varepsilon_A$.
Thus
$$
\stroke{\frac{\eta(p^2 z)}{\eta(z)} \, \Fell{1}{1}{p}{z}}{\gA}{1} 
= S_{\gA}(z) + W_{\gA}(z) \,
\int_{-\overline{z}}^{i\infty}
\frac{g_{\gA}(\tau)}{\sqrt{-i\Lpar{\tau + z}}}\,d\tau,
$$
for some functions 
$S_{\gA}$, $W_{\gA}$, and $g_{\gA}$ holomorphic on $\mathfrak{h}$.
The function $S_{\gA}(z)$ is the product of a constant, 
the function
$$
\stroke{\frac{\eta(p^2 z)}{\eta(z)}}{\gA}{1},
$$
the function $\eta(z)$ and
one of the following
$$
\Nell{a}{p}{z},\,
\Mell{a}{p}{z} + \varepsilon_2\Lpar{\frac{a}{p};z},\,
\mathcal{N}(a,b,p,z)\,
\mathcal{M}(a,b,p,z) + \varepsilon_2(a,b,p;z).
$$
Using the fact that $\frac{\eta(p^2 z)}{\eta(z)}$ is a modular function
on $\Gamma_0(p^2)$ and by 
examining \eqn{Ndef}--\eqn{Nabcdef} we
find that
$S_{\gA}(z)$ has only finitely many terms with negative exponents
when expanded as a series in $q_{p^2}$.

We let $1\le \ell \le \tfrac{1}{2}(p-1)$ and we show an analogous result
holds for
$\stroke{\Fell{2}{\ell}{p}{z}}{\gA}{1}$.

\subsubsection*{Case 1} 
$c\not\equiv 0 \pmod{p}$.
From Theorem \thm{Ftrans} we have
$$
\Fell{2}{\ell}{p}{z}  = i\,
\stroke{\Fell{1}{\ell}{p}{z}}{S}{1}.
$$
We choose $b'$ so that $b' c \equiv d \pmod{p^2}$ and
$$
\PPMat \, \AMat = Y \, \begin{pmatrix}1 & b' \\ 0 & p^2\end{pmatrix},
$$
where
$$
S\, Y = \begin{pmatrix} -c & \frac{1}{p^2}(b'c - d) \\ p^2 a & b - b'a\end{pmatrix}
\in \Gamma_0(p^2).
$$
Hence by Theorem \thm{mainthm} we have
\begin{align*}
&\stroke{\Fell{2}{\ell}{p}{p^2z}}{\gA}{1}
= i\,p^{-1}\,
\stroke{\Fell{1}{\ell}{p}{z}}{S}{1}
\strokeb{\PPMat}{1} \strokeb{A}{1}\\
&= i\,p^{-1}\,
\stroke{\Fell{1}{\ell}{p}{z}}{SY}{1}
\strokeb{\BpMat}{1}\\
&= i\,p^{-1}\,\mu(SY,\ell)\,
\Fell{1}{\ell'}{p}{z}
\strokeb{\BpMat}{1}\\
&= i\,\mu(SY,\ell)\,
\Fell{1}{\ell'}{p}{\frac{z+b'}{p^2}},
\end{align*}
for some integer $\ell'$. 
Hence
$$
\stroke{\Fell{2}{\ell}{p}{p^2z}}{\gA}{1} 
= S_{\ell,\gA}(z) + W_{\ell,\gA}(z) \,
\int_{-\overline{\zbpp}}^{i\infty}
\frac{g_{\ell,\gA}(\tau)}{\sqrt{-i\Lpar{\tau + \zbpp}}}\,d\tau,
$$
for some functions 
$S_{\ell,\gA}$, $W_{\ell,\gA}$, and $g_{\ell,\gA}$ holomorphic on $\mathfrak{h}$.
By considering the transformation $\tau \mapsto \frac{\tau-b'}{p^2}$ we find that
$$
\stroke{\Fell{2}{\ell}{p}{p^2z}}{\gA}{1} 
= S_{\ell,\gA}(z) + W_{\ell,\gA}(z) \,
\int_{-\overline{z}}^{i\infty}
\frac{\gtwid_{\ell,\gA}(\tau)}{\sqrt{-i\Lpar{\tau + z}}}\,d\tau,
$$
for some holomorphic function $\gtwid_{\ell,\gA}$.
This time the function $S_{\ell,\gA}(z)$ is product of a constant and 
$$
\eta(z)\,\Nell{a}{p}{z},
$$
with $z$ replace by $\zbpp$ and thus has only finitely many 
many terms with negative exponents
when expanded as a series in $q_{p^2}$. In fact in this case the exponents
are nonnegative.

\subsubsection*{Case 2} 
$c\equiv 0 \pmod{p^2}$. Then
$$
\PPMat \, A = Y' \, \PPMat,
$$
where
$$
Y' = 
\begin{pmatrix} a & p^2b\\ cp^{-2} & d\end{pmatrix}\in\SLZ.
$$
By Theorem \thm{Ftrans} we have
\begin{align*}
&\stroke{\Fell{2}{\ell}{p}{p^2z}}{\gA}{1}
= p^{-1}\,
\stroke{\Fell{2}{\ell}{p}{z}}{\PPMat}{1}
\strokeb{A}{1}\\
&= p^{-1}\,
\stroke{\Fell{2}{\ell}{p}{z}}{Y}{1}
\strokeb{\PPMat}{1}\\
&= \varepsilon_{\ell,A} \, \mathcal{F}_{\ell,A}(p^2z),
\end{align*}
for some $\mathcal{F}_{\ell,A} \in \mathfrak{W}_p$ and some root of unity 
$\varepsilon_{\ell,A}$.
As in Case 1 we find that
$$
\stroke{\Fell{2}{\ell}{p}{p^2z}}{\gA}{1} 
= S_{\ell,\gA}(z) + W_{\ell,\gA}(z) \,
\int_{-\overline{z}}^{i\infty}
\frac{\gtwid_{\ell,\gA}(\tau)}{\sqrt{-i\Lpar{\tau + z}}}\,d\tau,
$$
for some functions 
$S_{\ell,\gA}$, $W_{\ell,\gA}$, and $\gtwid_{\ell,\gA}$ holomorphic on $\mathfrak{h}$,
and for which
$S_{\gA}(z)$ has only finitely many terms with negative exponents
when expanded as a series in $q_{p^2}$.

\subsubsection*{Case 3} 
$c\equiv 0 \pmod{p}$ and $c\not\equiv 0 \pmod{p^2}$. 
We choose $b''$ so that $b''c \equiv dp\pmod{p^2}$, and
$$
\PPMat \, \AMat = Y'' \, \BbpMat,
$$
where
$$
Y'' = \begin{pmatrix} pa & bp - a b''\\ c/p & \frac{1}{p^2}(dp-cb'')\end{pmatrix}
\in \SLZ.
$$
  
Again by Theorem \thm{Ftrans} it follows that
$$                   
\stroke{\Fell{2}{\ell}{p}{p^2z}}{\gA}{1}
= \varepsilon_{\ell,A} \, \mathcal{F}_{\ell,A}\left(z + \frac{b''}{p}\right).
$$
As in Case 2 we find that
$$
\stroke{\Fell{2}{\ell}{p}{p^2z}}{\gA}{1} 
= S_{\ell,\gA}(z) + W_{\ell,\gA}(z) \,
\int_{-\overline{z}}^{i\infty}
\frac{\gtwid_{\ell,\gA}(\tau)}{\sqrt{-i\Lpar{\tau + z}}}\,d\tau,
$$
for some functions 
$S_{\ell,\gA}$, $W_{\ell,\gA}$, and $\gtwid_{\ell,\gA}$ holomorphic on $\mathfrak{h}$,
and for which
$S_{\gA}(z)$ has only finitely many terms with negative exponents
when expanded as a series in $q_{p^2}$.

From \eqn{JJSid}, \eqn{JSdef} we have
$$
\stroke{\Jpz{z}}{A}{1} = 
\stroke{\JSpz{z}}{A}{1} = 
\sum_{\ell=0}^{\frac{1}{2}(p-1)}
\Stwid_{\ell,\gA}(z) + \sum_{\ell=0}^{\frac{1}{2}(p-1)} \Wtwid_{\ell,\gA}(z) \,
\int_{-\overline{z}}^{i\infty}
\frac{\gtwid_{\ell,\gA}(\tau)}{\sqrt{-i(\tau+z)}}\,d\tau
$$
where
$$
\Stwid_{0,\gA}(z) = S_A(z),\qquad \Wtwid_{0,A}(z) = W_A(z),
$$
$$
\Stwid_{\ell,\gA}(z) = 
2\,\chi_{12}(p) \,
(-1)^{\ell+1} 
\sin\Lpar{\frac{6\ell\pi}{p}} \,S_{\ell,A}(z),\qquad
\Wtwid_{\ell,\gA}(z) = 
2\,\chi_{12}(p) \,
(-1)^{\ell+1} 
\sin\Lpar{\frac{6\ell\pi}{p}} \,W_{\ell,A}(z),
$$
for $1 \le \ell \le \tfrac{1}{2}(p-1)$.
We claim that the sum
$$
\sum_{\ell=0}^{\frac{1}{2}(p-1)} \Wtwid_{\ell,\gA}(z) \,
\int_{-\overline{z}}^{i\infty}
\frac{\gtwid_{\ell,\gA}(\tau)}{\sqrt{-i(\tau+z)}}\,d\tau
$$
is identically zero. Hence we may suppose that not all the functions
$$
\Wtwid_{0,\gA}(z) \, \Wtwid_{1,\gA}(z) \, \dots, \Wtwid_{\frac{1}{2}(p-1),\gA}(z)
$$
are identically zero.  
We take a maximal linearly independent subset of them, say,
$$
\Wdtwid_{1,\gA}(z) \, \Wdtwid_{1,\gA}(z) \, \dots, \Wdtwid_{m,\gA}(z).
$$
Then for each $\ell$, $0\le \ell\le \tfrac{1}{2}(p-1)$, 
there exists constants $\beta_{j,\ell}$ such
that
$$
\Wtwid_{\ell,\gA}(z) = \sum_{j=1}^m \beta_{j,\ell} \Wdtwid_{j,\gA}(z),
$$
and we have
\begin{align*}
\sum_{\ell=0}^{\frac{1}{2}(p-1)} \Wtwid_{\ell,\gA}(z) \,
\int_{-\overline{z}}^{i\infty}
\frac{\gtwid_{\ell,\gA}(\tau)}{\sqrt{-i(\tau+z)}}\,d\tau
&=
\sum_{\ell=0}^{\frac{1}{2}(p-1)} 
\sum_{j=1}^m \beta_{j,k} \Wdtwid_{j,\gA}(z)\,
\int_{-\overline{z}}^{i\infty}
\frac{\gtwid_{\ell,\gA}(\tau)}{\sqrt{-i(\tau+z)}}\,d\tau
\\
&=
\sum_{j=1}^m 
\Wdtwid_{j,\gA}(z)\,
\int_{-\overline{z}}^{i\infty}
\sum_{\ell=0}^{\frac{1}{2}(p-1)} 
\beta_{j,\ell} \gtwid_{\ell,\gA}(\tau)
\frac{d\tau}{\sqrt{-i(\tau+z)}}
\\
&=
\sum_{j=1}^m 
\Wtwid_{j,\gA}(z)\,
\int_{-\overline{z}}^{i\infty}
\frac{g_{j}^{*}(\tau)\,d\tau}{\sqrt{-i(\tau+z)}},
\end{align*}
where
$$
g_j^{*}(\tau) = 
\sum_{\ell=0}^{\frac{1}{2}(p-1)} 
\beta_{j,\ell} \gtwid_{\ell,\gA}(\tau)
$$
is holomorphic on $\mathfrak{h}$. Applying $\frac{\partial}{\partial \zcon}$ to
$\stroke{\JSpz{z}}{\gA}{1}$ gives
\begin{align*}
0 &= \frac{\partial}{\partial \zcon} 
\sum_{j=1}^m 
\Wtwid_{j,\gA}(z)\,
\int_{-\overline{z}}^{i\infty}
\frac{g_{j}^{*}(\tau)\,d\tau}{\sqrt{-i(\tau+z)}},
\\
&=
\sum_{j=1}^m 
\frac{\Wdtwid_{j,\gA}(z)\, g_{j}^{*}(-\zcon)}{\sqrt{-i(z+\zcon)}},
\end{align*}
since $\JSpz{z}=\Jpz{z}$ and the 
$S_{\ell,\gA}(z)$ are holomorphic.
Thus
$$
\sum_{j=1}^m 
\Wtwid_{j,\gA}(z)\, g_{j}^{*}(-\zcon) = 0,
$$
and so all the $g_{j}^{*}$ are identically zero by Lemma \lem{holo}.
Hence
$$
\stroke{\Jpz{z}}{\gA}{1}(z)
= 
\sum_{\ell=0}^{\frac{1}{2}(p-1)} S_{\ell,\gA}(z).
\mylabel{eq:JpAz}
$$
Each of the functions $S_{\ell,\gA}(z)$ 
has only finitely many terms with negative exponents
when expanded as a series in $q_{p^2}$.
Thus $\Jpz{z}$ is a weakly holomorphic modular form of weight $1$ on
$\Gamma_0(p^2) \cap \Gamma_1(p)$,
which completes the proof of Theorem \thm{mainJp}.

\section{Dyson's rank conjecture and beyond}
\mylabel{sec:DRC}

In this section we give a new proofs of Dyson's rank conjecture,
and related results for the rank mod 11 due to Atkin and Hussain and
for the rank mod 13 due to O'Brien. 

We define the (weight $k$) Atkin $U_p$ operator by
\beq
\stroke{F}{U_p}{k} := \frac{1}{{p}} \sum_{r=0}^{p-1} F\Lpar{\frac{z+r}{p}} 
= p^{\frac{k}{2}-1}\sum_{n=0}^{p-1} \stroke{F}{T_r}{k},
\mylabel{Updef}
\eeq
where
$$
T_r = \begin{pmatrix} 1 & r \\ 0 & p \end{pmatrix},
$$
and the more general $U_{p,m}$ defined by
\beq
\stroke{F}{U_{p,m}}{k} := \frac{1}{p} \sum_{r=0}^{p-1} 
\exp\Lpar{-\frac{2\pi irm}{p}}\,F\Lpar{\frac{z+r}{p}} 
= p^{\frac{k}{2}-1} 
\sum_{r=0}^{p-1} \exp\Lpar{-\frac{2\pi irm}{p}}\, \stroke{F}{T_r}{k}. 
\mylabel{Upkdef}
\eeq
We note that $U_p = U_{p,0}$. In addition, if
$$
F(z) = \sum_n a(n) q^n = \sum_n a(n)\,\exp(2\pi izn),
$$
then
$$
\stroke{F}{U_{p,m}}{k} = q^{m/p} \sum_n a(pn+m)\,q^n = \exp(2\pi imz/p)\, 
\sum_n a(pn+m)\,\exp(2\pi inz).
$$

\begin{definition}
\mylabel{def:Kpm}
For $p>3$ prime and $0 \le m \le p-1$ define
\beq
\mathcal{K}_{p,m}(z) :=  \sin\Lpar{\frac{\pi}{p}} \, \stroke{\Jpz{z}}{U_{p,m}}{1},
\mylabel{eq:Kpmdef}
\eeq
where $\Jpz{z}$ is defined in \eqn{Jdef}.
\end{definition} 

Then a straightforward calculation gives
\begin{prop}
\mylabel{propo:Kpmprop}        
For $p>3$ prime and $0 \le m \le p-1$. 
\begin{enumerate}
\item[(i)]
For $m=0$ or $\leg{-24m}{p}=-1$ we 
have  
\beq
\mathcal{K}_{p,m}(z) = q^{m/p}\,\prod_{n=1}^\infty (1-q^{pn})\, 
\sum_{n=\CL{\frac{1}{p}(s_p-m)}}^\infty \left(\sum_{k=0}^{p-1} N(k,p,pn + m -s_p)\,\zeta_p^k\right)q^n,
\mylabel{eq:Kpm1prop}
\eeq
where $s_p=\frac{1}{24}(p^2-1)$, and $q=\exp(2\pi iz)$. 
\item[(ii)]
If $\leg{-24m}{p}=1$ we choose $1\le a \le \frac{1}{2}(p-1)$ so that
$$
-24m \equiv \Lpar{6a}^2 \pmod{p},
$$
and we have 
\begin{align}
&\mathcal{K}_{p,m}(z) 
\mylabel{eq:Kpm2prop}
\\
&= q^{m/p}\,\prod_{n=1}^\infty (1-q^{pn})\, 
\Bigg(
\sum_{n=\CL{\frac{1}{p}(s_p-m)}}^\infty \left(\sum_{k=0}^{p-1} N(k,p,pn + m -s_p)\,\zeta_p^k\right)q^n
\nonumber               
\\
& \quad            
- \chi_{12}(p) \, (-1)^a \,
  \left( \zeta_p^{3a + \frac{1}{2}(p+1)} + \zeta_p^{-3a - \frac{1}{2}(p+1)}  
  - 
    \zeta_p^{3a + \frac{1}{2}(p-1)} - \zeta_p^{-3a + \frac{1}{2}(p-1)}\right)
  \,
  q^{\frac{1}{p}( \frac{a}{2}(p - 3a) - m)}
  \,
  \Phi_{p,a}(q) 
  \Bigg).
\nonumber
\end{align}
\end{enumerate}
\end{prop}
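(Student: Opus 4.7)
The plan is to unpack $\Jpz{z}$ from \eqn{Jdef} and apply $U_{p,m}$ term by term. For the ``main'' term, using \eqn{Ndef}, \eqn{RNacid}, and $\eta(p^2z)=q^{s_p+1/24}\prod(1-q^{p^2n})$, one first rewrites
\[
\sin(\pi/p)\,\eta(p^2z)\,\Nell{1}{p}{z} \;=\; q^{s_p}\prod_{n\ge 1}(1-q^{p^2n})\,R(\zeta_p,q).
\]
Since $\prod(1-q^{p^2n})$ involves only powers of $q^{p^2}$, the substitution $n\mapsto n'+p\ell$ in the resulting Cauchy product factors out $\prod(1-q^{pn})$ when $U_{p,m}$ is applied, giving
\[
q^{m/p}\prod_{n\ge 1}(1-q^{pn})\sum_{n\ge \lceil(s_p-m)/p\rceil}\Bigl(\sum_{k=0}^{p-1}N(k,p,pn+m-s_p)\zeta_p^k\Bigr)q^n,
\]
which is the main sum appearing in both \eqn{Kpm1prop} and \eqn{Kpm2prop}.

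For the correction terms, indexed by $1\le \ell\le (p-1)/2$, I will establish the uniform identity
\[
\eta(p^2z)\bigl[\Mell{\ell}{p}{p^2z}+\varepsilon_2(\ell/p;p^2z)\bigr] \;=\; 2\,q^{\ell(p-3\ell)/2}\prod_{n\ge 1}(1-q^{p^2n})\,\Phi_{p,\ell}(q^p).
\]
This follows from \eqn{Macid} applied with $q\mapsto q^p$: in the range $0<6\ell<p$ the correction $\varepsilon_2(\ell/p;p^2z)\ne 0$ supplies exactly the constant term missing from the definition of $\Phi_{p,\ell}$ in \eqn{phipadef}, whereas in the range $p<6\ell<3p$ we have $\varepsilon_2=0$ and the $-1$ is already built into $\Phi_{p,\ell}$; both cases collapse to the same formula. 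The exponent $\ell(p-3\ell)/2$ emerges from combining the prefactor $3\ell(p-\ell)/2-p^2/24$ of $\Mell{\ell}{p}{p^2z}$ with the shift $-p\ell$ coming from expressing $\Mac{\ell}{p}{p^2z}$ via $\Phi_{p,\ell}(q^p)$, together with the $q^{p^2/24}$ prefactor of $\eta(p^2z)$.

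Since $\prod(1-q^{p^2n})\Phi_{p,\ell}(q^p)$ is a series in $q^p$, every exponent of $q$ on the right-hand side above is $\equiv \ell(p-3\ell)/2 \equiv -3\ell^2/2 \pmod p$. Hence the $\ell$-th correction is annihilated by $U_{p,m}$ unless $-24m\equiv (6\ell)^2\pmod p$. In part (i) this congruence fails for every admissible $\ell$ (trivially when $m=0$; by the nonresidue hypothesis otherwise), so only the main sum survives and \eqn{Kpm1prop} follows. In part (ii) the map $\ell\mapsto (6\ell)^2\bmod p$ is a bijection from $\{1,\dots,(p-1)/2\}$ onto the nonzero quadratic residues, so exactly one $\ell=a$ contributes. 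Applying the rule $\stroke{q^e G(q^p)}{U_{p,m}}{1}=q^{e/p}G(q)$ (valid when $e\equiv m\pmod p$) to the surviving term produces a factor $-4\sin(\pi/p)\,\chi_{12}(p)\,(-1)^a\sin(6a\pi/p)$ multiplying $q^{m/p}\,q^{(a(p-3a)/2-m)/p}\prod(1-q^{pn})\Phi_{p,a}(q)$, and \eqn{Kpm2prop} reduces to the trigonometric identity
\[
4\sin(\pi/p)\sin(6a\pi/p) \;=\; \zeta_p^{3a+(p+1)/2}+\zeta_p^{-3a-(p+1)/2}-\zeta_p^{3a+(p-1)/2}-\zeta_p^{-3a-(p-1)/2},
\]
which is immediate from $2\sin A\sin B=\cos(A-B)-\cos(A+B)$ together with $\zeta_p^{(p\pm 1)/2}=-\zeta_{2p}^{\pm 1}$. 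The only mildly delicate step is the two-case reconciliation in the second paragraph; everything else is careful bookkeeping.
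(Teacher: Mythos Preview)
Your argument is correct and is precisely the ``straightforward calculation'' the paper alludes to (the paper does not spell out a proof). The two-case verification that $\eta(p^2z)\bigl[\Mell{\ell}{p}{p^2z}+\varepsilon_2(\ell/p;p^2z)\bigr]=2q^{\ell(p-3\ell)/2}(q^{p^2};q^{p^2})_\infty\Phi_{p,\ell}(q^p)$ is the one nontrivial step, and your treatment of it is accurate: in the range $0<6\ell<p$ the exponent identity $\frac{\ell(p-3\ell)}{2}-\frac{p^2}{24}=-\frac{(6\ell-p)^2}{24}$ makes $\varepsilon_2$ supply exactly the missing constant; in the range $p<6\ell<3p$ both $\varepsilon_2$ and the extra constant vanish.

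One remark: the last exponent in the displayed $\zeta_p$-combination of \eqn{Kpm2prop} should read $-3a-\tfrac12(p-1)$ rather than $-3a+\tfrac12(p-1)$ (compare \eqn{Rpdef} and \eqn{Rpdefv2}); your final trigonometric identity already reflects the corrected sign, so your derivation in fact recovers the intended formula.
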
              

Dyson's rank conjecture is equivalent to showing
$$
\mathcal{K}_{5,0}(z) = \mathcal{K}_{7,0}(z) = 0.
$$
In this section we will give a new proof of Dyson's rank conjecture and much more.
We will prove 
\begin{theorem}
\mylabel{thm:Kpthm}
Let $p>3$ be prime and suppose $0 \le m \le p-1$. Then 
\begin{enumerate}
\item[(i)]
$\mathcal{K}_{p,0}(z)$ is a weakly holomorphic modular form of weight $1$ on
$\Gamma_1(p)$.
\item[(ii)]
If $1 \le m \le (p-1)$ then $\mathcal{K}_{p,m}(z)$ is a 
weakly holomorphic modular form of weight $1$ on
$\Gamma(p)$.                           
In particular,
$$
\stroke{\mathcal{K}_{p,m}(z)}{A}{1}
= \exp\Lpar{\frac{2\pi ibm}{p}}\,\mathcal{K}_{p,m}(z),
$$
for $A=\AMat\in\Gamma_1(p)$.                       
\end{enumerate}
\end{theorem}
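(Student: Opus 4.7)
The plan is to deduce Theorem \thm{Kpthm} from Theorem \thm{mainJp} (modularity of $\Jpz{z}$ on $\Gamma_0(p^2)\cap\Gamma_1(p)$) by means of a matrix-decomposition argument for the $U_{p,m}$ operator. First I would use the expansion
$$\stroke{F}{U_{p,m}}{1} = p^{-1/2}\sum_{r=0}^{p-1} \exp\Lpar{-\frac{2\pi irm}{p}}\,\stroke{F}{T_r}{1},\qquad T_r = \begin{pmatrix} 1 & r \\ 0 & p \end{pmatrix}.$$
For $A = \AMat \in \Gamma_1(p)$ and each $r \in \{0,\ldots,p-1\}$, I would establish the factorization $T_r A = A'\,T_{s}$, where $s=s(r)$ is the unique element of $\{0,\ldots,p-1\}$ with $s \equiv r + b \pmod p$ and
$$A' = \begin{pmatrix} a + rc & (b + rd - (a+rc)s)/p \\ pc & d - cs \end{pmatrix}.$$
Using $c \equiv 0 \pmod p$ and $a \equiv d \equiv 1 \pmod p$, one checks $b+rd-(a+rc)s \equiv b+r-s \equiv 0 \pmod p$ (so $A'$ is integral), $pc \equiv 0 \pmod{p^2}$, and $a+rc \equiv d-cs \equiv 1 \pmod p$, placing $A' \in \Gamma_0(p^2) \cap \Gamma_1(p)$. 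The map $r \mapsto s(r)$ is a bijection of $\{0,\ldots,p-1\}$ with inverse $s \mapsto (s-b)\bmod p$.

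Second, since $\Jpz{z}$ is invariant under every such $A'$ by Theorem \thm{mainJp}, we have $\stroke{\Jpz{z}}{T_r A}{1} = \stroke{\Jpz{z}}{T_{s(r)}}{1}$. Substituting and factoring $\exp(2\pi ibm/p)$ out of the sum yields
$$\stroke{\mathcal{K}_{p,m}(z)}{A}{1} = \exp\Lpar{\frac{2\pi ibm}{p}}\,\mathcal{K}_{p,m}(z),$$
which is the claimed transformation law in (ii). Part (i) is the $m=0$ case, where the multiplier is identically $1$ on $\Gamma_1(p)$. For $1 \le m \le p-1$, restricting to $A \in \Gamma(p)$ forces $b \equiv 0 \pmod p$, so the multiplier becomes trivial and full modularity holds on $\Gamma(p)$.

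Finally I would verify weak holomorphicity at the cusps. Each $\stroke{\Jpz{z}}{T_r}{1}$ is, viewed on $\mathfrak{h}$, a translate of $\Jpz{z}$ at the cusp $r/p$ of $\Gamma_0(p^2)\cap\Gamma_1(p)$, so the cusp analysis of Section \subsect{pfmainJp2} already shows that $\stroke{\stroke{\Jpz{z}}{T_r}{1}}{\gamma}{1}$ has only finitely many negative powers of the local uniformizer for every $\gamma\in\SL_2(\mathbb{Z})$. Being a finite linear combination of such objects, $\mathcal{K}_{p,m}(z)$ inherits the property at every cusp of $\Gamma_1(p)$ (respectively $\Gamma(p)$).

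The main obstacle is the bookkeeping in the factorization $T_r A = A'\,T_{s(r)}$ and the verification that $A'$ lands in $\Gamma_0(p^2)\cap\Gamma_1(p)$; once the correct residue $s(r)\equiv r+b\pmod p$ is identified, the modularity and the explicit multiplier $\exp(2\pi ibm/p)$ fall out of a single reindexing, and the cusp conditions reduce to the analysis already carried out for $\Jpz{z}$ itself.
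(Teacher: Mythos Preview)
Your proposal is correct and follows essentially the same approach as the paper: the paper uses the identical matrix factorization $T_k A = B_k T_{k'}$ with $k'\equiv b+k\pmod p$ and $B_k\in\Gamma_0(p^2)\cap\Gamma_1(p)$, applies Theorem~\thm{mainJp}, and reindexes to extract the multiplier $\exp(2\pi ibm/p)$. The paper dispatches the cusp conditions with the phrase ``follow by a standard argument,'' which is exactly the reduction you spell out.
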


\begin{definition}
\mylabel{def:Rpm}
For $p>3$ prime and $0 \le m \le p-1$ define
$$
\mathcal{R}_{p,m}(z) := \frac{1}{\eta(pz)}\, \mathcal{K}_{p,m}(z).
$$
\end{definition}

From Theorem \thm{Kpthm} and \eqn{etamult} we have
\begin{cor}
\mylabel{cor:Rpm}
Let $p>3$ be prime and suppose 
$0 \le m \le p-1$. Then $\mathcal{R}_{p,m}(z)$ 
is a weakly holomorphic modular form of weight $1/2$ on
$\Gamma_1(p)$ with multiplier. In particular,
$$
\stroke{\mathcal{R}_{p,m}(z)}{A}{\tfrac{1}{2}}
= \frac{\exp\Lpar{\frac{2\pi ibm}{p}}}{\nu_\eta({}^pA)}\,
\mathcal{R}_{p,m}(z),
$$
for $A=\AMat\in\Gamma_1(p)$ and where
$$
{}^pA = \ApMat.
$$
\end{cor}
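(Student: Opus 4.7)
The plan is to deduce this corollary as a quick formal consequence of Theorem \thm{Kpthm} together with the well-known transformation law \eqn{etamult} for $\eta(z)$ under $\SL_2(\mathbb{Z})$. Since $\mathcal{R}_{p,m}(z) = \mathcal{K}_{p,m}(z)/\eta(pz)$, the key observation is that dividing a weight $1$ form by a weight $1/2$ form yields a weight $1/2$ form, and the resulting multiplier is the quotient of the two individual multipliers. So essentially the whole argument is a bookkeeping exercise for how $\eta(pz)$ transforms under $\Gamma_1(p)$.

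First, I would compute the transformation of $\eta(pz)$ under an arbitrary $A=\AMat\in\Gamma_1(p)$. Since $p\mid c$, one can write $p\cdot A(z) = ({}^p A)(pz)$ where ${}^p A=\ApMat\in\SLZ$, so that
\[
\eta(p\cdot A(z)) \;=\; \eta\bigl(({}^p A)(pz)\bigr)\;=\;\nu_\eta({}^p A)\,(cz+d)^{1/2}\,\eta(pz),
\]
by \eqn{etamult} applied to ${}^p A$ at the point $pz$. Equivalently, $\stroke{\eta(pz)}{A}{1/2} = \nu_\eta({}^p A)\,\eta(pz)$.

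Next I would combine this with Theorem \thm{Kpthm}, which gives
\[
\stroke{\mathcal{K}_{p,m}(z)}{A}{1} \;=\; \exp\!\Lpar{\tfrac{2\pi ibm}{p}}\,\mathcal{K}_{p,m}(z).
\]
Since the stroke operator of weight $k_1-k_2$ applied to a quotient $F_1/F_2$ agrees with the quotient of the stroke-$k_1$ and stroke-$k_2$ images (a direct check from the definition \eqn{strokedef}), we obtain
\[
\stroke{\mathcal{R}_{p,m}(z)}{A}{1/2}
\;=\;\frac{\stroke{\mathcal{K}_{p,m}(z)}{A}{1}}{\stroke{\eta(pz)}{A}{1/2}}
\;=\;\frac{\exp\!\Lpar{\frac{2\pi ibm}{p}}}{\nu_\eta({}^p A)}\,\mathcal{R}_{p,m}(z),
\]
which is precisely the claimed transformation.

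Finally I would verify the weakly holomorphic condition. On $\mathfrak{h}$, $\eta(pz)$ has no zeros, so $\mathcal{R}_{p,m}(z)$ is holomorphic on $\mathfrak{h}$ because $\mathcal{K}_{p,m}(z)$ is (by Theorem \thm{Kpthm}). At each cusp of $\Gamma_1(p)$, $\eta(pz)$ has a finite, explicit (nonnegative) order of vanishing, and the $q$-expansion of $\stroke{\mathcal{K}_{p,m}(z)}{\gA}{1}$ has only finitely many negative-exponent terms for every $\gA\in\SLZ$; hence the same is true of $\stroke{\mathcal{R}_{p,m}(z)}{\gA}{1/2}$. There is essentially no obstacle here: the only nontrivial piece is the identification $p\cdot A(z)=({}^p A)(pz)$ with ${}^p A\in\SLZ$, which uses $p\mid c$ and is the reason the statement must be phrased in terms of $\nu_\eta({}^p A)$ rather than $\nu_\eta(A)$.
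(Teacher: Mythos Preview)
Your argument is correct and is exactly the approach the paper takes: the paper derives Corollary~\corol{Rpm} in one line from Theorem~\thm{Kpthm} together with \eqn{etamult}, and your proof simply spells out the routine computation (the identity $p\cdot A(z)=({}^pA)(pz)$ and the resulting quotient of multipliers) that this line encodes.
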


\begin{remark}
When $m=0$ or $\leg{-24m}{p}=-1$ 
$$  
\mathcal{R}_{p,m}(z) = \sum_{n=\CL{\frac{1}{p}(s_p-m)}}^\infty 
\left(\sum_{k=0}^{p-1} N(k,p,pn+m-s_p)\,\zeta_p^k\right)
q^{n+\tfrac{m}{p}-\tfrac{p}{24}},
$$
and we note that in these cases Corollary \corol{Rpm}
greatly strengthens a theorem of Ahlgren and Treneer 
\cite[Theorem 1.6, p.271]{Ah-Tr08},
and a theorem of Bringmann, Ono and Rhoades 
\cite[Theorem 1.1(i)]{Br-On-Rh}.
\end{remark}

\begin{remark}
When $\leg{-24m}{p}=1$ we have
\begin{align*}
&\mathcal{R}_{p,m}(z) 
\\
&= 
\sum_{n=\CL{\frac{1}{p}(s_p-m)}}^\infty \left(\sum_{k=0}^{p-1} N(k,p,pn + m -s_p)\,\zeta_p^k\right)q^{n+\tfrac{m}{p}-\tfrac{p}{24}}
\\
& \quad            
- \chi_{12}(p) \, (-1)^a \,
  \left( \zeta_p^{3a + \frac{1}{2}(p+1)} + \zeta_p^{-3a - \frac{1}{2}(p+1)}  
  - 
    \zeta_p^{3a + \frac{1}{2}(p-1)} - \zeta_p^{-3a + \frac{1}{2}(p-1)}\right)
  \,
  q^{\frac{1}{p}( \frac{a}{2}(p - 3a) -\tfrac{p^2}{24})}
  \,
  \Phi_{p,a}(q).
\end{align*}
The result that this is a weakly holomorphic modular form is new.
Here $a$ is defined as in Proposition \propo{Kpmprop}(ii).
\end{remark}


\subsection{Proof of Theorem \thm{Kpthm}}
\mylabel{subsec:Kpthmproof}

We let
$$
A = \AMat \in \Gamma_1(p)
$$
so that $a\equiv d\equiv 1\pmod{p}$ and $c\equiv0\pmod{p}$. Let $0 \le k \le p-1$.
We take $k'\equiv b+k\pmod{p}$ so that      
$$
T_k \, A = B_k \, T_{k'},
$$
and
$$
B_k = T_k \, A \, T_{k'}^{-1} = 
\begin{pmatrix} a+ck & \tfrac{1}{p}(-k'(a+kc) + b+kd) \\
pc & d -k'c
\end{pmatrix} \in  \Gamma_0(p^2) \cap \Gamma_1(p). 
$$
Firstly
$$
\mathcal{K}_{p,m}(z) 
= \frac{1}{\sqrt{p}}\, 
\sin\Lpar{\frac{\pi}{p}} \, 
\sum_{k=0}^{p-1} 
\exp\Lpar{-\frac{2\pi ikm}{p}}\,
\stroke{\Jpz{z}}{T_r}{1}. 
$$
Thus
\begin{align*}
&\stroke{\mathcal{K}_{p,m}(z)}{A}{1}\\
&= \frac{1}{\sqrt{p}}\, 
\sin\Lpar{\frac{\pi}{p}} \, 
\sum_{k=0}^{p-1} 
\exp\Lpar{-\frac{2\pi ikm}{p}}\,
\stroke{\Jpz{z}}{\left(T_k \,A\right)}{1} \\
&= \frac{1}{\sqrt{p}}\, 
\sin\Lpar{\frac{\pi}{p}} \, 
\sum_{k=0}^{p-1} 
\exp\Lpar{-\frac{2\pi i(k'-b)m}{p}}\,
\stroke{\Jpz{z}}{\left(B_k \, T_{k'}\right)}{1} \\
&= \frac{1}{\sqrt{p}}\, 
\sin\Lpar{\frac{\pi}{p}} \, 
\exp\Lpar{\frac{2\pi ibm}{p}}\,
\sum_{k'=0}^{p-1} 
\exp\Lpar{-\frac{2\pi ik'm}{p}}\,
\stroke{\Jpz{z}}{T_{k'}}{1} \\
&{\hskip 2in}
\mbox{(by Theorem \thm{mainJp} since $B_k\in \Gamma_0(p^2) \cap \Gamma_1(p)$)}\\
&=
\exp\Lpar{\frac{2\pi ibm}{p}}\,
\mathcal{K}_{p,m}(z),
\end{align*}
as required. Thus each function $\mathcal{K}_{p,m}(z)$ has the desired
transformation property. It is clear that each $\mathcal{K}_{p,m}(z)$
is holomorphic on $\mathfrak{h}$. The cusp conditions follow
by a standard argument. For $m=0$ we will examine 
orders at each cusp in more detail in the next section.

\subsection{Orders at cusps}
\mylabel{subsec:cuspords}
Recall from Corollary \corol{Vc} that
\begin{align}
\mathfrak{V}_p&:= \left\{\mathcal{G}_1\Lpar{\frac{a}{p};z},\,
             \mathcal{G}_2\Lpar{\frac{a}{p};z}\,:\, 0< a < p\right\}
\mylabel{eq:Vp}\\
&\qquad \cup \left\{\mathcal{G}_1(a,b,p;z),\, \mathcal{G}_2(a,b,p;z)\,:\,
0\le a < p\quad\mbox{and}\quad 0<b<p\right\}
\nonumber
\end{align}
is a vector valued Maass form of weight $\tfrac{1}{2}$ for the full modular
group $\SL_2(\mathbb{Z})$, and that the action of $\SL_2(\mathbb{Z})$ on
each element is given explicitly by Theorem \thm{Gtrans}. Also for each
$\mathcal{G}\in\mathfrak{V}_p$ there are unique holomorphic functions
$\mathcal{G}_{\mbox{\scriptsize holo}}(z)$ and $\mathcal{G}_{\mbox{\scriptsize shadow}}(z)$ such that
$$
\mathcal{G}(z) = 
\mathcal{G}_{\mbox{\scriptsize holo}}(z) + \int_{-\zcon}^\infty 
\frac{\mathcal{G}_{\mbox{\scriptsize shadow}}(\tau)\,d\tau}{\sqrt{-i(\tau+z)}}.
$$
Also each $\mathcal{G}_{\mbox{\scriptsize holo}}(z)$ has a $q$-expansion
$$
\mathcal{G}_{\mbox{\scriptsize holo}}(z) = \sum_{m\ge m_0} a(m) 
\exp\Lpar{2\pi i z \frac{m}{24p^2}},
$$ 
where $a(m_0)\ne 0$. We define
$$
\hord(\mathcal{G};\infty) := \frac{m_0}{24p^2}.
$$
For any cusp $\frac{a}{c}$ with $(a,c)=1$ we define 
$$
\hord\Lpar{\mathcal{G};\frac{a}{c}} := \hord(\stroke{\mathcal{G}}{A}{1};\infty),
$$
where $A\in\SL_2(\mathbb{Z})$ and $A\infty = \frac{a}{c}$. 
We note that $\stroke{\mathcal{G}}{A}{1}\in
\mathfrak{V}_p$. One can easily check that this definition does not depend on the choice
of $A$ so that $\hord$ is well-defined.
We also note that when $\mathcal{G}(z)$ is a weakly holomorphic modular form 
this definition
coincides with the definition of invariant order at a cusp \cite[p.2319]{Br-Sw}, \cite[p.275]{Bi89}
The order of each function 
$\mathcal{F}(z)=\eta(z)\,\mathcal{G}(z)\in \eta(z) \cdot \mathfrak{V}_p$ is
defined in the natural way;i.e.\ 
$$
\hord\Lpar{\mathcal{F};\frac{a}{c}} 
:= \ord\Lpar{\eta(z);\frac{a}{c}}\,+\, \hord\Lpar{\mathcal{G};\frac{a}{c}},
$$
where $\ord\Lpar{\eta(z);\frac{a}{c}}$ is the usual invariant order of $\eta(z)$ at the cusp
$\frac{a}{c}$ \cite[p.34]{Ko-book}.

We determine $\hord\Lpar{\mathcal{F};\infty}$     for each 
$\mathcal{F}(z)=\eta(z)\,\mathcal{G}(z)\in \mathfrak{W}_p = \eta(z) \cdot \mathfrak{V}_p$.  
After some calculation we find
\begin{prop}
\mylabel{propo:Fords}
Let $p>3$ be prime. Then
$$
\hord\Lpar{\Fell{1}{a}{p}{z};\infty} = 0,
$$
$$
\hord\Lpar{\mathcal{F}_1(a,b,p;z);\infty} 
= 
\begin{cases}
\frac{b}{2p} - \frac{3b^2}{2p^2} & \mbox{if $0\le \frac{b}{p} < \frac{1}{6}$},\\
\frac{3b}{2p} - \frac{3b^2}{2p^2} & \mbox{if $\frac{1}{6} <   \frac{b}{p} < \frac{1}{2}$},\\
\frac{5b}{2p} - \frac{3b^2}{2p^2} & \mbox{if $\frac{1}{2} <   \frac{b}{p} < \frac{5}{6}$},\\
\frac{7b}{2p} - \frac{3b^2}{2p^2} & \mbox{if $\frac{5}{6} <  \frac{b}{p} < 1$}, 
\end{cases}
$$
$$
\hord\Lpar{\Fell{2}{a}{p}{z};\infty}
=
\hord\Lpar{\mathcal{F}_2(a,b,p;z);\infty} 
= 
\begin{cases}
\frac{a}{2p} - \frac{3a^2}{2p^2} & \mbox{if $0\le \frac{a}{p} < \frac{1}{6}$},\\
\frac{3a}{2p} - \frac{3a^2}{2p^2} & \mbox{if $\frac{1}{6} <   \frac{a}{p} < \frac{5}{6}$},\\
\frac{5a}{2p} - \frac{3a^2}{2p^2} - 1& \mbox{if $\frac{5}{6} <   \frac{a}{p} < 1$}.
\end{cases}
$$
\end{prop}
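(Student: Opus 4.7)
The plan is to read off the leading $q$-exponent of the holomorphic part of each $\mathcal{F}\in\mathfrak{W}_p$ at the cusp $i\infty$ and then apply $\hord(\mathcal{F};\infty)=\ord(\eta;\infty)+\hord(\mathcal{G};\infty)=\tfrac{1}{24}+\hord(\mathcal{G};\infty)$. By \eqn{G1acdef}--\eqn{G2abcdef}, every $\mathcal{G}\in\mathfrak{V}_p$ splits as \emph{holomorphic $q$-series plus period integral}, with the latter annihilated by $\partial/\partial\overline{z}$. The holomorphic pieces are $\mathcal{N}(a/p;z)$ and $\mathcal{M}(a/p;z)+\varepsilon_2(a/p;z)$, together with the corresponding $(a,b,p)$-versions, and multiplying by $\eta(z)=q^{1/24}(q;q)_\infty$ cancels the common $q^{-1/24}/(q;q)_\infty$ factor in the definitions of $\mathcal{N}$ and $\mathcal{M}$. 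The whole problem thus reduces to identifying the lowest nonzero power of $q$ in the bracketed series of each definition.

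For $\Fell{1}{a}{p}{z}$ the $n\ge 1$ summands of $N(a/p;z)(q;q)_\infty$ in \eqn{Ndef} carry factors $q^{n(3n+1)/2}\ge q^2$, so $\eta(z)\mathcal{N}(a/p;z)=\csc(a\pi/p)(1+O(q^2))$ and $\hord=0$. For $\mathcal{F}_1(a,b,p;z)$ I would substitute the definition of $N(a,b,p;z)$ and observe that the explicit rational term expands as $\tfrac{i\zeta_{2p}^{-a}}{2}\sum_{j\ge0}\zeta_p^{-aj}q^{(2j+1)b/(2p)}$, with leading $q^{b/(2p)}$; combining with the prefactor $q^{(b/p)k(b,p)-3b^2/(2p^2)}$ from \eqn{Nabcdef} yields the four stated values $(2k(b,p)+1)b/(2p)-3b^2/(2p^2)$.

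For both $\mathcal{F}_2$ families I would compare $\eta(z)\mathcal{M}$ with $\eta(z)\varepsilon_2$. Isolating the $n=-1$ term of $M(a/p;z)(q;q)_\infty$ produces a nonzero constant, so $\eta(z)\mathcal{M}(a/p;z)=2q^{3a/(2p)-3a^2/(2p^2)}(1+O(q^{\min(a/p,\,1-a/p)}))$. A direct substitution into the definition of $\varepsilon_2$ gives $\eta(z)\varepsilon_2(a/p;z)=2q^{a/(2p)-3a^2/(2p^2)}(q;q)_\infty$ for $0<a/p<1/6$, equals $0$ for $1/6<a/p<5/6$, and equals $2q^{5a/(2p)-3a^2/(2p^2)-1}(q;q)_\infty$ for $5/6<a/p<1$. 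Since the two leading exponents are distinct in each interval, no cancellation occurs and the smaller one is $\hord$, producing the three cases. The parameter $b$ enters $M(a,b,p;z)$ only through $1/(1-\zeta_p^b q^{n+a/p})$ in the $n$-th summand, which does not shift the leading power of $q$, and $\varepsilon_2(a,b,p;z)$ carries the same $q$-power as $\varepsilon_2(a/p;z)$, so the $(a,b,p)$-case reduces to the previous one.

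The main obstacle will arise in the $\mathcal{F}_1(a,b,p;z)$ case when $k(b,p)\ge 2$: individual tail terms $K(a,b,p,m;z)q^{m(3m+1)/2}$ from the series in \eqn{Nabcdef} look, on first inspection, to carry $q$-exponents smaller than $b/(2p)$. I expect to handle this by factoring the denominator as $(1-\zeta_p^a q^{m-b/p})(1-\zeta_p^{-a}q^{m+b/p})$ and regrouping the four $\sin$-terms in the numerator, whereupon the apparently small contributions cancel across $m$---a feature encoded in the precise choice of $k(b,p)$---so that the true leading power of $(q;q)_\infty N(a,b,p;z)$ is indeed $q^{b/(2p)}$. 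Once this cancellation is in hand, the remaining steps are routine bookkeeping from the explicit $q$-series.
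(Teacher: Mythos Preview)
Your approach is essentially the same as the paper's, which gives no proof beyond the phrase ``After some calculation we find.'' The direct computation of the lowest $q$-exponent from the explicit definitions \eqn{Ndef}--\eqn{Nabcdef}, after cancelling the $q^{-1/24}/(q;q)_\infty$ against $\eta(z)$, is exactly the intended calculation, and your treatment of $\Fell{1}{a}{p}{z}$ and of both $\mathcal{F}_2$ families is correct.

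There is one point to sharpen. In the $\mathcal{F}_1(a,b,p;z)$ case with $k(b,p)\ge 2$ you correctly anticipate that individual terms $K(a,b,p,m;z)\,q^{m(3m+1)/2}$ appear to carry exponents below $b/(2p)$. However, your proposed mechanism of ``cancellation across $m$'' is not quite what happens: for $k=2$ only $m=1$ produces a sub-$b/(2p)$ contribution from the numerator, so there is nothing at another $m$ to cancel against. The resolution instead comes from expanding the single term $K(a,b,p,1;z)\,q^{2}$ fully: once you write the numerator as
\[
\frac{1}{2i}\bigl(\zeta_{2p}^{a}q^{-b/(2p)-km}-\zeta_{2p}^{-a}q^{b/(2p)+km}
+\zeta_{2p}^{a}q^{m-b/(2p)+km}-\zeta_{2p}^{-a}q^{m+b/(2p)-km}\bigr)
\]
and divide by the factored denominator $(1-\zeta_p^{a}q^{m-b/p})(1-\zeta_p^{-a}q^{m+b/p})$, the two factors combine with the four numerator pieces so that the apparent pole in $q$ is removable and the genuine lowest power is $q^{b/(2p)}$; this internal cancellation (within a fixed $m$, not across $m$) is precisely what the particular choice of $k(b,p)$ is designed to achieve. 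Once you track this carefully the stated formula $(2k(b,p)+1)\frac{b}{2p}-\frac{3b^2}{2p^2}$ drops out in all four ranges.
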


We also need \cite[Corollary 2.2]{Ko-book}
\begin{prop}
\mylabel{propo:etaord}
Let $N\ge 1$ and let
$$
F(z) = \prod_{m\mid N} \eta(m z)^{r_m},
$$
where each $r_m\in\mathbb{Z}$. Then for $(a,c)=1$,
$$
\ord\Lpar{F(z);\frac{a}{c}} = \sum_{m\mid N} \frac{(m,c)^2 r_m}{24m}.
$$
\end{prop}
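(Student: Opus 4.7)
The plan is to prove this standard result by reducing to the single-factor case $F(z) = \eta(mz)$ and then applying the Dedekind eta-transformation to a carefully chosen matrix decomposition.

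First I would note that both sides of the identity are additive in the exponents $r_m$: the order of a product is the sum of orders, and the right-hand side is clearly $\mathbb{Z}$-linear in the $r_m$. So it suffices to establish
\begin{equation*}
\ord\Lpar{\eta(mz);\frac{a}{c}} = \frac{(m,c)^2}{24m}
\end{equation*}
for each positive divisor $m$ of $N$ and each reduced fraction $\frac{a}{c}$.

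To compute this, I would pick $A = \AMat\in\SLZ$ with $A(\infty) = \frac{a}{c}$ and form the matrix product
\begin{equation*}
\begin{pmatrix} m & 0 \\ 0 & 1 \end{pmatrix}\,A \;=\; \begin{pmatrix} ma & mb \\ c & d \end{pmatrix}.
\end{equation*}
Setting $g = (m,c)$, $m = g\mu$, $c = g\gamma$ with $(\mu,\gamma)=1$, one can factor this as $A'\cdot B$ with $A'\in\SLZ$ and $B = \begin{pmatrix} g & h \\ 0 & m/g \end{pmatrix}$ for an appropriate integer $h$; the entries of $A'$ are determined by choosing $x,y$ with $x\mu+y\gamma=1$ and using $\gcd(\mu,\gamma)=1$ to clear the lower-left entry. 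Applying the Dedekind eta transformation $\eta(A'\tau) = \nu(A')\,\sqrt{c'\tau+d'}\,\eta(\tau)$ then yields
\begin{equation*}
\eta(mAz) \;=\; (\text{root of unity})\cdot(cz+d)^{1/2}\cdot \eta\!\left(\frac{g^2}{m}\,z + \frac{gh}{m}\right),
\end{equation*}
up to bookkeeping of signs and the multiplier $\nu$.

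Reading off the leading $q$-expansion of the right-hand side gives a power $q^{g^2/(24m)}$ times a holomorphic function non-vanishing at infinity. Since the invariant order at the cusp $\frac{a}{c}$ is defined precisely so as to absorb the cusp-width normalization (and the factor $(cz+d)^{1/2}$ contributes to the automorphy factor rather than to the order), we conclude $\ord(\eta(mz);\frac{a}{c}) = \frac{g^2}{24m} = \frac{(m,c)^2}{24m}$. Summing over divisors weighted by $r_m$ gives the proposition.

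The one step requiring care is the matrix factorization $\begin{pmatrix} m & 0 \\ 0 & 1\end{pmatrix}A = A'B$ with $A'\in\SLZ$ and the correct identification of the resulting cusp width; this is the main bookkeeping obstacle, but it is standard (and indeed Koblitz carries it out explicitly). Everything else — the additivity and the eta-transformation — is routine.
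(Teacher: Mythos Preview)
Your argument is correct and is the standard computation; the paper itself does not give a proof of this proposition but simply cites it as \cite[Corollary 2.2]{Ko-book} (K\"ohler's book on eta products). So there is nothing to compare against beyond noting that your sketch matches the usual textbook derivation (e.g.\ Koblitz), and the only delicate point---the factorization $\begin{pmatrix} m & 0 \\ 0 & 1\end{pmatrix}A = A'B$ with $A'\in\SLZ$ and $B$ upper-triangular of determinant $m$---you have correctly identified and handled.
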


From \cite[Corollary 4, p.930]{Ch-Ko-Pa} we have
\begin{prop}
\mylabel{propo:cusps1}
Let $p>3$ be prime. Then a set of inequivalent cusps for $\Gamma_1(p)$ is given by
$$
i\infty,\, 0,\, 
\frac{1}{2},\, 
\frac{1}{3},\, 
\dots,\,
\frac{1}{\tfrac{1}{2}(p-1)},\,
\frac{2}{p},\, 
\frac{3}{p},\, 
\dots,\,
\frac{\tfrac{1}{2}(p-1)}{p}.
$$
\end{prop}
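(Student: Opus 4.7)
The plan is to invoke the standard $\Gamma_1(N)$ cusp equivalence criterion: two reduced cusps $\tfrac{a_1}{c_1}$ and $\tfrac{a_2}{c_2}$ (with $c_i \geq 0$ and $\gcd(a_i, c_i) = 1$) are $\Gamma_1(N)$-equivalent if and only if there exist an integer $j$ and a sign $\varepsilon \in \{\pm 1\}$ such that $a_2 \equiv \varepsilon a_1 + j c_1 \pmod{N}$ and $c_2 \equiv \varepsilon c_1 \pmod{N}$. Taking $N = p$ prime, I would refine the coarser $\Gamma_0(p)$-decomposition, which consists of just the two orbits $\{\infty, 0\}$ distinguished by $p \mid c$ and $p \nmid c$ respectively, to enumerate the finer $\Gamma_1(p)$-orbits.

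For the $\Gamma_0(p)$-class of $\infty$, one may assume $c = p$ with $1 \le a \le p-1$; the criterion then collapses to $a_2 \equiv \pm a_1 \pmod{p}$, yielding $\tfrac{p-1}{2}$ orbits with representatives $\tfrac{a}{p}$ for $1 \le a \le \tfrac{p-1}{2}$. I would then note that $\tfrac{1}{p}$ is $\Gamma_1(p)$-equivalent to $i\infty = \tfrac{1}{0}$ via the explicit matrix $\bigl(\begin{smallmatrix} 1 & 0 \\ p & 1 \end{smallmatrix}\bigr) \in \Gamma_1(p)$, which allows the substitution of $i\infty$ for $\tfrac{1}{p}$ in the list. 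For the $\Gamma_0(p)$-class of $0$, i.e., $p \nmid c$, the criterion becomes $c_2 \equiv \pm c_1 \pmod{p}$ paired with an $a$-congruence that can always be arranged by adjusting the numerator modulo $c$. Hence $c$ ranges over residues $\{1, 2, \ldots, \tfrac{p-1}{2}\}$, and the canonical choices $a = 0$ at $c = 1$ and $a = 1$ for $c \ge 2$ produce representatives $0, \tfrac{1}{2}, \tfrac{1}{3}, \ldots, \tfrac{1}{(p-1)/2}$.

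Combining the two classes gives exactly $\tfrac{p-1}{2} + \tfrac{p-1}{2} = p - 1$ representatives, which matches the classical formula $\#\mathrm{Cusps}(\Gamma_1(p)) = \tfrac{1}{2}\sum_{d \mid p}\phi(d)\phi(p/d) = p - 1$ valid for $p > 3$ prime; this confirms that the proposed list is both exhaustive and irredundant. The main delicate point is the precise form of the equivalence criterion, which is sensitive to the fact that $-I \notin \Gamma_1(p)$ for $p > 2$: the $\pm$ factor arises because distinct reduced representatives $\tfrac{a}{c}$ and $\tfrac{-a}{-c}$ denote the same point of $\mathbb{P}^1(\mathbb{Q})$, yet only one sign may be realized inside $\Gamma_1(p)$ itself. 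Either citing \cite{Ch-Ko-Pa} or else deriving this criterion directly, by unwinding when a matrix with $\alpha \equiv \delta \equiv 1 \pmod{p}$ and $\gamma \equiv 0 \pmod{p}$ can send one cusp to another, is really the only non-bookkeeping step in the argument.
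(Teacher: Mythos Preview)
Your argument is correct. The paper itself does not prove this proposition at all; it simply quotes the statement from \cite[Corollary~4, p.~930]{Ch-Ko-Pa}. So you are supplying strictly more than the paper does: a self-contained derivation from the standard $\Gamma_1(N)$ cusp-equivalence criterion (as in Diamond--Shurman, Prop.~3.8.3), together with the count check $\#\mathrm{Cusps}(\Gamma_1(p))=p-1$.

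One small wording point: in the $p\nmid c$ case you say the $a$-congruence ``can always be arranged by adjusting the numerator modulo $c$.'' The actual mechanism is that $j$ is free and $c_1$ is a unit modulo $p$, so $jc_1$ ranges over all residues mod $p$; the numerator of the cusp itself is fixed. This is clearly what you meant, and the conclusion is unaffected.
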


We next calculate lower bounds of the invariant order of 
$\Kp(z)$ at each cusp of $\Gamma_1(p)$.

\begin{theorem}
\mylabel{thm:Kpords}
Let $p>3 $ be prime,  
and suppose $2 \le m \le \tfrac{1}{2}(p-1)$. Then
\begin{enumerate}
\item[(i)]
$$
\ord\Lpar{\Kp(z);0}
\quad
\begin{cases}
\ge 0 & \mbox{$p=5$ or $7$},\\
=-\frac{1}{24p}(p-5)(p-7) & \mbox{if $p>7$};
\end{cases}
$$
\item[(ii)]
$$
\ord\Lpar{\Kp(z);\frac{1}{m}}
\quad
\begin{cases}
= - \frac{3}{2p}\Parans{\frac{1}{6}(p-1) - m}\Parans{\frac{1}{6}(p+1)-m} &
\mbox{if $2\le m <\tfrac{1}{6}(p-1)$},\\
\ge 0 & \mbox{otherwise};
\end{cases}
$$
and
\item[(iii)]
$$
\ord\Lpar{\Kp(z);\frac{m}{p}}
\ge \Lpar{\frac{p^2-1}{24p}}.
$$
\end{enumerate}
\end{theorem}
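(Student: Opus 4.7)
The plan is to compute the invariant order of $\mathcal{K}_{p,0}$ at each cusp $a/c$ of $\Gamma_1(p)$ by using the identity
$$
\mathcal{K}_{p,0}\,|_1\,A = \frac{\sin(\pi/p)}{p^{1/2}} \sum_{k=0}^{p-1} J_p\,|_1\,(T_k A),
$$
valid for $A \in \SL_2(\mathbb{Z})$ with $A\infty = a/c$, together with the factorization
$$
J_p^*(z) = \frac{\eta(p^2z)}{\eta(z)}\left(\Fell{1}{1}{p}{z} - 2\chi_{12}(p)\sum_{\ell=1}^{(p-1)/2} (-1)^\ell \sin\!\Big(\tfrac{6\ell\pi}{p}\Big)\Fell{2}{\ell}{p}{p^2 z}\right).
$$
For each $k$ I would factor $T_k A = B_k M_k$ with $B_k \in \SL_2(\mathbb{Z})$ chosen inside $\Gamma_0(p^2) \cap \Gamma_1(p)$ (under which $J_p^*$ is invariant) whenever possible, and $M_k$ upper-triangular of determinant $p$. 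Theorem \thm{mainthm}, Theorem \thm{Ftrans}, and Corollary \corol{F2trans} then rewrite each $\Fell{j}{\ell}{p}{\cdot}\,|_1\,(T_k A)$ as another element of $\mathfrak{W}_p$ evaluated at $M_k z$, up to an explicit root of unity; Proposition \propo{Fords} supplies the leading $q$-power of each $\mathcal{F}_j$-factor, and Proposition \propo{etaord} supplies the invariant order of $\eta(p^2z)/\eta(z)$ at the cusp $a/c$.

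In case (iii), cusp $m/p$, every $T_k A$ has bottom-left divisible by $p^2$, so $B_k$ can be taken in $\Gamma_0(p^2) \cap \Gamma_1(p)$ and each summand reduces to $J_p^*\,|_1\,T_{r_k}$, whose $q_p$-expansion is immediate. All resulting $\mathcal{F}_j$-contributions have nonnegative order at $\infty$ by Proposition \propo{Fords}, and the lower bound $(p^2-1)/(24p)$ emerges after combining these with the invariant order of $\eta(p^2z)/\eta(z)$ at $m/p$ and the $U_p$-scaling factor of $1/p$ in the $q$-variable. In cases (i) and (ii), $c \in \{1, m\}$ is coprime to $p$ so the bottom-left of $T_k A$ is only divisible by $p$; here I would pick $B_k \in \Gamma_0(p)$ (expressible through Rademacher's generators $V_k$) and acquire the nontrivial multiplier $\mu(B_k, \ell)$ from Theorem \thm{mainthm}, which permutes the indices $\ell \mapsto \overline{d_{B_k}\ell}$. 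The action on $\Fell{2}{\ell}{p}{p^2z}$ is handled by writing this function, via Theorem \thm{Ftrans}, as $-i\,\Fell{1}{\ell}{p}{z}\,|_1\,(SP)$ with $P = \PPMat$, and then conjugating $SP$ across $T_k A$ reduces matters back to Theorem \thm{mainthm}. Reading off the resulting leading $q_p$-powers and minimizing over all $k$ and $\ell$ then yields the explicit piecewise formulas stated in (i) and (ii).

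The substantive difficulty lies in the case analysis built into Proposition \propo{Fords}: the value of $\hord(\mathcal{F}_2(a,b,p;\cdot);\infty)$ depends on which of the intervals $(0,1/6)$, $(1/6,5/6)$, $(5/6,1)$ contains $a/p$, so each pair $(k,\ell)$ yields a different quadratic-in-$\ell$ exponent, and one must identify the unique minimizer and verify that it is not cancelled. For case (ii) with $2 \le m < (p-1)/6$, the minimum is realized by the single $\Fell{2}{\ell}{p}{p^2z}$-contribution with $\ell = m$, producing the exponent $\tfrac{m(p-3m)}{2p} - \tfrac{p^2-1}{24p} = -\tfrac{3}{2p}\bigl(\tfrac{p-1}{6}-m\bigr)\bigl(\tfrac{p+1}{6}-m\bigr)$; outside this range every contribution has nonnegative order, giving the bound $\ge 0$. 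For case (i), an analogous computation at cusp $0$ isolates the dominant $\ell$ and yields the factor $(p-5)(p-7)$ in the numerator when $p > 7$, while for $p \in \{5,7\}$ the minimum contribution is nonnegative. Nonvanishing of the minimum-order term follows from the explicit sine factor in the definition of $J_p^*$ combined with the root-of-unity multipliers, but tracking these phases cleanly across all summands is the principal practical obstacle.
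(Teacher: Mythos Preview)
Your overall strategy---expand $\mathcal{K}_{p,0}|_1 A$ as a sum over $k$ and bound each summand using Propositions \propo{Fords} and \propo{etaord}---matches the paper's, but you miss the paper's key preliminary simplification and this leads to a genuine error in your identification of the dominant term.

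The paper first proves directly that
\[
\stroke{\Fell{2}{\ell}{p}{p^2z}}{U_p}{1}=0
\]
for every $1\le \ell\le \tfrac{1}{2}(p-1)$, by the elementary observation that $\Fell{2}{\ell}{p}{z+p}=\zeta_p^{\ell'}\Fell{2}{\ell}{p}{z}$ with $\ell'\not\equiv 0\pmod p$, so the $U_p$-sum is a vanishing geometric series.  Consequently
\[
\mathcal{K}_{p,0}(z)=\sin\!\Big(\tfrac{\pi}{p}\Big)\,\stroke{\SFell{1}{1}{p}{z}}{U_p}{1},\qquad
\SFell{1}{1}{p}{z}:=\frac{\eta(p^2z)}{\eta(z)}\,\Fell{1}{1}{p}{z},
\]
and only this single function needs to be tracked.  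The case analysis is then just two-fold: for each $k$ either $a+kc\not\equiv 0\pmod p$ (Case~1, giving order~$0$) or $a+kc\equiv 0\pmod p$ (Case~2, exactly one $k$, transforming $\Fell{1}{1}{p}{z}$ into $\Fell{2}{\overline{-c}}{p}{pz}$).  All negative orders in (i) and (ii) come from this unique Case~2 value of $k$.

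Your claim that in case (ii) ``the minimum is realized by the single $\Fell{2}{\ell}{p}{p^2z}$-contribution with $\ell=m$'' is therefore wrong: those $\Fell{2}$-contributions cancel identically under $U_p$, so the nonvanishing check you flag as ``the principal practical obstacle'' would in fact fail for the term you name.  The numerical exponent you compute, $\tfrac{m(p-3m)}{2p}-\tfrac{p^2-1}{24p}$, happens to agree with the paper's answer, but its actual source is the Case~2 transformation of $\Fell{1}{1}{p}{z}$, not an $\Fell{2}$-summand of $J_p^*$.  If you incorporate the $U_p$-annihilation step first, your outline becomes correct and considerably shorter.
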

\begin{proof}
We derive lower bounds for $\ord\Lpar{\Kp(z);\zeta}$  for each cusp $\zeta$
of $\Gamma_1(p)$ not equivalent to $i\infty$.

First we show that
$$
\stroke{\Fell{2}{\ell}{p}{p^2 z}}{U_{p}}{1}=0,
$$
for $1 \le \ell \le \tfrac{1}{2}(p-1)$. From Theorem \thm{Ftrans}
$$
\Fell{2}{\ell}{p}{z+p} = \zeta_p^{\ell'}\, \Fell{2}{\ell}{p}{z} ,
$$
where $\ell' \equiv  \frac{3}{2}(p-1)\ell^2\pmod{p}$. We have
\begin{align*}
\stroke{\Fell{2}{\ell}{p}{p^2 z}}{U_{p}}{1}
&= \frac{1}{p} \sum_{r=0}^{p-1} \Fell{2}{\ell}{p}{p^2z + rp} \\
&= \frac{1}{p} \Fell{2}{\ell}{p}{p^2z} 
    \sum_{r=0}^{p-1} \Lpar{\zeta_p^{\ell'}}^r
\\
&=0.
\end{align*}
Thus from \eqn{JJSid}, \eqn{JSdef}, \eqn{Kpmdef} we have
$$
\mathcal{K}_{p,0}(z) =  \sin\Lpar{\frac{\pi}{p}} \, \stroke{\Jpz{z}}{U_{p,0}}{1}
=  \sin\Lpar{\frac{\pi}{p}} \, 
\stroke{\SFell{1}{1}{p}{z}}{U_p}{1}.
$$
where
\beq
\SFell{1}{1}{p}{z} = \frac{\eta(p^2 z)}{\eta(z)}\, \Fell{1}{1}{p}{z}.
\mylabel{eq:SFelldef}
\eeq

Next we calculate 
$$
\stroke{\SFell{1}{1}{p}{z}}{T_k\,A}{1}.
$$
for each $0 \le k \le p-1$ and each $A=\AMat\in\SLZ$.

\subsubsection*{Case 1} 
\mylabel{subsubsec:CuspConcase1}
$a+kc\not\equiv0\pmod{p}$. 
Choose $0\le k'\le p-1$ such that
$$
(a+kc)\, k' \equiv (b+kd) \pmod{p}.
$$
Then
$$
T_k \, \gA = C_k \, T_{k'},
$$
where
$$
C_k = T_k \, \gA \, T_{k'}^{-1} = 
\begin{pmatrix} a+ck & \tfrac{1}{p}(-k'(a+kc) + b+kd) \\
pc & d -k'c
\end{pmatrix} \in \Gamma_0(p).
$$
From Theorem \thm{mainthm} we have
\begin{align}
\stroke{\SFell{1}{1}{p}{z}}{T_k\,A}{1}
=
\stroke{\SFell{1}{1}{p}{z}}{C_k\,T_{k'}}{1}
\mylabel{eq:Tk1}\\
=
\Lpar{\stroke{F_p(z)}{C_k\,T_{k'}}{0}}\,
\Lpar{\mu(C_k,1)\,\stroke{\Fell{1}{\ell}{p}{z}}{T_{k'}}{1}},
\nonumber
\end{align}
where
\beq
F_p(z) = \frac{\eta(p^2 z)}{\eta(z)}.
\eeq
\subsubsection*{Case 2} 
\mylabel{subsubsec:CuspConcase2}
$a+kc\equiv0\pmod{p}$. In this case we find that
$$
T_k \, \gA = D_k \, P,
$$
where
$$
P = \begin{pmatrix} p & 0 \\ 0 & 1\end{pmatrix},
$$
$$
D_k = \begin{pmatrix} \frac{1}{p}(a+kc) & b + kd \\ c & pd \end{pmatrix}\in
\SL_2(\mathbb{Z}),
$$
and  
$$
E_k = D_k\,S = 
\begin{pmatrix}
b+kd  &  \frac{-1}{p}(a+kc)\\
pd & -c
\end{pmatrix}\in\Gamma_0(p). 
$$
From Theorem \thm{mainthm} we have
$$
\stroke{\Fell{1}{1}{p}{z}}{E_k}{1}
= \mu(E_k,1) \, \Fell{1}{\overline{-c}}{p}{z}.
$$
By Theorem \thm{Ftrans}(2) we have
$$
\stroke{\Fell{1}{1}{p}{z}}{D_k}{1}
= \mu(E_k,1) \, \stroke{\Fell{1}{\overline{-c}}{p}{z}}{S^{-1}}{1}
= i \mu(E_k,1) \, \Fell{2}{\overline{-c}}{p}{z},
$$
and
$$
\stroke{\Fell{1}{1}{p}{z}}{T_k\,A}{1}
= i \mu(E_k,1) \, \stroke{\Fell{2}{\overline{-c}}{p}{z}}{P}{1},
$$
so that
\begin{equation}
\stroke{\SFell{1}{1}{p}{z}}{T_k\,A}{1}
=
\Lpar{\stroke{F_p(z)}{D_k\,P}{0}}\,
\Lpar{i \mu(E_k,1) \, \stroke{\Fell{2}{\overline{-c}}{p}{z}}{P}{1}}.
\mylabel{eq:Tk2}
\end{equation}

Now we are ready to exam each cusp $\zeta$ of $\Gamma_1(p)$.
We choose 
$$
\gA = \AMat \in \SL_2(\mathbb{Z}),\quad\mbox{so that $\gA(\infty) = \frac{a}{c}=\zeta$.}
$$

\subsubsection*{(i) $\zeta=0$}
Here $a=0$, $c=1$ and we assume $0 \le k \le p-1$.
If $k\ne0$ then applying \eqn{Tk1} we have
\begin{align*}
\hord\Lpar{\SFTk;0} = 
&= \frac{1}{p}\, \ord\Lpar{F_p(z);\frac{k}{p}} + 
  \frac{1}{p} \, \hord \Lpar{\Fell{1}{\ell}{p}{z}; i\infty}\\
&= 0 + 0 = 0,
\end{align*}
by Propositions \propo{etaord} and \propo{Fords}. Now applying \eqn{Tk2}
with $k=0$ we have
\begin{align*}
\hord\Lpar{\SFTZ;0}
&= p\, \ord\Lpar{F_p(z);0} + 
   p\, \hord \Lpar{\Fell{2}{1}{p}{z}; i\infty}\\
&= -\frac{1}{24p}(p^2-1)  + 
\begin{cases} \frac{6}{5} & \mbox{if $p=5$},\\
              \frac{1}{2p}(p-3) & \mbox{if $p>5$},
\end{cases}\\
&=                              
\begin{cases} 1 & \mbox{if $p=5$},\\
              -\frac{1}{24p}(p-5)(p-7) & \mbox{if $p>5$},
\end{cases}
\end{align*}
again by Propositions \propo{etaord} and \propo{Fords}. The result (i) follows since
$$
\ord\Lpar{\Kp(z);0} \ge \, \min_{0 \le k \le p-1} 
\hord\Lpar{\SFTk;0}.
$$
\subsubsection*{(ii) $\zeta=\frac{1}{m}$, where $2 \le m \le \tfrac{1}{2}(p-1)$}
Let $A=\begin{pmatrix}1 & 0 \\ m & 1 \end{pmatrix}$ so that $A(\infty)=1/m$.
If $km\not\equiv-1\pmod{p}$ we apply \eqn{Tk1} with 
$C_k = \begin{pmatrix}1 + km & * \\ pm & 1-k'm\end{pmatrix}$, and find that
\begin{align*}
\hord\Lpar{\SFTk;\frac{1}{m}}
&= \frac{1}{p}\, \ord\Lpar{F_p(z);\frac{1+km}{pm}} + 
  \frac{1}{p} \, \hord \Lpar{\Fell{1}{\ell}{p}{z}; i\infty}\\
&= 0 + 0 = 0.
\end{align*}
Now we assume $km\equiv-1\pmod{p}$ and we will apply \eqn{Tk2}. We have
\begin{align*}
&\hord\Lpar{\SFTk;\frac{1}{m}}
= p\, \ord\Lpar{F_p(z);\frac{(1+km)/p}{m}} + 
   p\, \hord \Lpar{\Fell{2}{m}{p}{z}; i\infty}\\
&= -\frac{1}{24p}(p^2-1)  + 
\begin{cases} \frac{m}{2} - \frac{3m^2}{2p}  & \mbox{if $2 \le m < \frac{p}{6}$},\\
              \frac{3m}{2} - \frac{3m^2}{2p}  & \mbox{if $\frac{p}{6} < m \le \frac{p-1}{2}$},
\end{cases}\\
&                              
\begin{cases}
= - \frac{3}{2p}\Parans{\frac{1}{6}(p-1) - m}\Parans{\frac{1}{6}(p+1)-m} &
\mbox{if $2\le m \le \tfrac{1}{6}(p-1)$},\\
> 0 & \mbox{otherwise},
\end{cases}
\end{align*}
and the result (ii) follows.

\subsubsection*{(iii) $\zeta=\frac{m}{p}$, where $2 \le m \le \tfrac{1}{2}(p-1)$}
Choose $b$,$d$ so that $A=\begin{pmatrix}m & b \\ p & d \end{pmatrix}
\in \SL_2(\mathbb{Z})$ and 
$A(\infty)=m/p$. Since $m\not\equiv0\pmod{p}$ we may apply \eqn{Tk1} for each $k$. We find that
$C_k = \begin{pmatrix}m + kp & * \\ p^2 & d-k'p\end{pmatrix}$, and 
\begin{align*}
\hord\Lpar{\SFTk;\frac{m}{p}}
&= \frac{1}{p}\, \ord\Lpar{F_p(z);\frac{m+kp}{p^2}} + 
  \frac{1}{p} \, \hord \Lpar{\Fell{1}{\ell}{p}{z}; i\infty}\\
&= \frac{p^2-1}{24p} + 0 = \frac{p^2-1}{24p}.
\end{align*}
The result (iii) follows.
\end{proof}
Since 
$$
\ord\Lpar{\Kp(z);0}
=-\frac{1}{24p}(p-5)(p-7) < 0,
$$
for $p >7$ we have
\begin{cor}
\mylabel{cor:Dysonanalog}
The analog of the Dyson Rank Conjecture does not hold for any prime $p >7$.
In other words $\Kp(z) \not \equiv 0$ for any prime $p > 7$.
\end{cor}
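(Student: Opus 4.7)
The plan is to read off the claim from Theorem~\thm{Kpords}(i) combined with Proposition~\propo{Kpmprop}(i). For $p>7$ prime, Theorem~\thm{Kpords}(i) asserts
$$\ord\bigl(\Kp(z);0\bigr) = -\tfrac{1}{24p}(p-5)(p-7),$$
which is a finite and strictly negative number. Since an identically zero function does not admit any finite negative leading exponent in a local $q$-expansion at any cusp, this immediately forces $\Kp(z)\not\equiv 0$.

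The one potentially delicate point is verifying that Theorem~\thm{Kpords}(i) really gives an equality here, not just the trivial lower bound $\ge \min_k\hord(\SFTk;0)$. Looking back at the structure of its proof, one writes
$$\Kp(z) = \sin(\pi/p)\cdot p^{-1/2}\sum_{k=0}^{p-1}\stroke{\SFell{1}{1}{p}{z}}{T_k}{1},$$
and computes that at the cusp $0$ the $k=0$ summand has order exactly $-(p-5)(p-7)/(24p)$, while every $k\ne 0$ summand has order $\ge 0$. For $p>7$ these orders are strictly separated, so the nonzero leading coefficient of the $k=0$ piece cannot be cancelled by contributions from the other summands; hence the sum inherits that leading order, giving the claimed equality.

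Finally, by Proposition~\propo{Kpmprop}(i), the non-vanishing of $\mathcal{K}_{p,0}(z)$ means that $\sum_{k=0}^{p-1}N(k,p,pn-s_p)\,\zeta_p^k$ is nonzero for at least one $n$. Because the only integer relation among $1,\zeta_p,\dots,\zeta_p^{p-1}$ is $1+\zeta_p+\cdots+\zeta_p^{p-1}=0$, an integer combination of these powers vanishes precisely when all its coefficients are equal. Thus $\Kp(z)\not\equiv 0$ directly rules out $N(0,p,pn-s_p)=\cdots=N(p-1,p,pn-s_p)$ for that $n$, which is the Dyson-type equidistribution analogue. There is no real obstacle here: the substantive work already sits in Theorem~\thm{Kpords}, and this corollary is essentially its one-line consequence, with the cusp $0$ (and only the cusp $0$) being responsible for the failure.
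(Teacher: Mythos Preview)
Your proof is correct and follows the same approach as the paper: the corollary is deduced immediately from the equality in Theorem~\thm{Kpords}(i), since a negative order at the cusp $0$ is incompatible with $\Kp(z)\equiv 0$. Your extra paragraph justifying why one gets equality (and not merely the lower bound $\ord\ge\min_k\hord$) is a helpful clarification that the paper leaves implicit; indeed, the paper's proof of Theorem~\thm{Kpords}(i) literally concludes with the lower-bound inequality, so your observation that the $k=0$ term has strictly smaller order than the $k\ne 0$ terms (hence no cancellation of the leading coefficient) is exactly what is needed to upgrade this to the equality asserted in the theorem statement.
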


\subsection{Proof of Dyson's rank conjecture}
\mylabel{subsec:proofDRC}

As noted before
Dyson's rank conjecture is equivalent to showing
\beq
\mathcal{K}_{5,0}(z) = \mathcal{K}_{7,0}(z) = 0.
\mylabel{eq:K57}
\eeq
The first proof was given by Atkin and Swinnerton-Dyer \cite{At-Sw}.
Their proof involved finding and  proving identities for basic hypergeometric functions,
theta-functions and Lerch-type series using the theory of elliptic functions.
It also involved identifying the generating functions for rank differences
$N(0,p,pn+r)-N(k,p,pn+r)$  for $p=5$, $7$ for each $ 1 \le k \le\tfrac{1}{2}(p-1)$ and
each $r=0$, $1$,\dots $p-1$. Atkin and Swinnerton-Dyer note \cite[p.84]{At-Sw} that they
are unable to simplify their proof so as only to obtain Dyson's results. In particular
to prove the result for $(p,r)=(5,4)$ or $(p,r)=(7,5)$ they must simultaneously  
prove identities for \text{all} $r$ with $0\le r \le p-1$. Here we show how to avoid this
difficulty.

To prove \eqn{K57} we use
\begin{theorem}[The Valence Formula \cite{Ra}(p.98)]
\mylabel{thm:val}
Let $f\ne0$ be a modular form of weight $k$ with respect to a subgroup $\Gamma$ of finite index
in $\Gamma(1)=\SL_2(\mathbb{Z})$. Then
\beq
\ORD(f,\Gamma) = \frac{1}{12} \mu \, k,
\mylabel{eq:valform}
\eeq
where $\mu$ is index $\widehat{\Gamma}$ in $\widehat{\Gamma(1)}$,
$$
\ORD(f,\Gamma) := \sum_{\zeta\in R^{*}} \ORD(f,\zeta,\Gamma),
$$
$R^{*}$ is a fundamental region for $\Gamma$,
and
$$
\ORD(f;\zeta;\Gamma) = n(\Gamma;\zeta)\, \ord(f;\zeta),
$$
for a cusp $\zeta$ and
$n(\Gamma;\zeta)$ denotes the fan width of the cusp $\zeta \pmod{\Gamma}$.
\end{theorem}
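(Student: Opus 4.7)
The plan is to reduce the formula to the classical valence formula on $\SL_2(\mathbb{Z})$ via a norm construction. First, I would establish the weight-$k$ formula for the full modular group: for a nonzero modular form $f$ of weight $k$ on $\SL_2(\mathbb{Z})$,
$$
\ord(f;\infty) + \tfrac{1}{2}\ord(f;i) + \tfrac{1}{3}\ord(f;\rho) + \sum_{P}\ord(f;P) = \tfrac{k}{12},
$$
the last sum running over the remaining $\SL_2(\mathbb{Z})$-orbits in $\mathfrak{h}$. This is proved by integrating $\frac{1}{2\pi i}\oint \frac{f'(z)}{f(z)}\,dz$ around a truncated standard fundamental domain: the two vertical sides cancel by $T$-invariance, the two circular arcs together contribute $k/12$ via the weight-$k$ modularity under $S$, small indentations at $i$ and $\rho$ produce the $\tfrac{1}{2}$ and $\tfrac{1}{3}$ factors, and the horizontal segment at height $Y\to\infty$ picks out $\ord(f;\infty)$.

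Next, I would pass to a finite-index subgroup $\Gamma$ of $\SL_2(\mathbb{Z})$ via a norm. Let $\gamma_1,\ldots,\gamma_\mu$ be right coset representatives for $\Gamma$ in $\SL_2(\mathbb{Z})$, and set
$$
F(z) := \prod_{j=1}^{\mu} \bigl(f\mid_k\gamma_j\bigr)(z).
$$
For any $\gamma\in\SL_2(\mathbb{Z})$ there exist $\delta_j\in\Gamma$ and a permutation $\sigma$ with $\gamma_j\gamma=\delta_j\gamma_{\sigma(j)}$, whence $F\mid_{k\mu}\gamma=F$; thus $F$ is a modular form of weight $k\mu$ on $\SL_2(\mathbb{Z})$. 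Applying the classical formula to $F$ yields $\ORD(F,\SL_2(\mathbb{Z}))=k\mu/12$.

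The remaining work is a bookkeeping argument. For each $\SL_2(\mathbb{Z})$-orbit $\zeta_0$ in $\SL_2(\mathbb{Z})\backslash\mathfrak{h}^{*}$, the preimages $\zeta$ in $\Gamma\backslash\mathfrak{h}^{*}$ correspond to the double cosets $\Gamma\backslash\SL_2(\mathbb{Z})/\mathrm{Stab}_{\SL_2(\mathbb{Z})}(\zeta_0)$, and the multiplicity with which $\ord(f;\zeta)$ enters $\ord(F;\zeta_0)$ is exactly the ramification index of the natural covering $\Gamma\backslash\mathfrak{h}^{*}\to\SL_2(\mathbb{Z})\backslash\mathfrak{h}^{*}$ at $\zeta$. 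At a cusp this index is the fan width $n(\Gamma;\zeta)$, so the quantity $\ORD(f;\zeta;\Gamma)$ of the statement appears verbatim; at the elliptic orbits of $i$ and $\rho$ it absorbs the weights $\tfrac{1}{2}$ and $\tfrac{1}{3}$ appearing in the $\SL_2(\mathbb{Z})$ formula. Summing over $\zeta_0$ then converts the valence formula for $F$ into $\ORD(f,\Gamma)=k\mu/12$.

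The main obstacle will be this last matching step. One has to verify in detail that the fan width $n(\Gamma;\zeta)$ as defined in the paper coincides with the ramification index of the covering at $\zeta$, and to handle with care the distinction between $\mu$ and $[\widehat{\Gamma(1)}:\widehat{\Gamma}]$ (i.e.\ whether $-I\in\Gamma$, which can introduce a factor of $2$ in passing to the hatted groups). Since the result is classical and cited from Rankin's monograph, the substantive content lies in these accounting issues rather than in any new analytic input.
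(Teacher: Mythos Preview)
The paper does not supply its own proof of this theorem: it is quoted as a classical result from Rankin's monograph (the citation \cite{Ra}, p.~98), and is simply applied as a tool in the arguments that follow. So there is no ``paper's proof'' to compare against.

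Your sketch is a standard and correct route to the result. The contour-integral argument gives the valence formula on $\SL_2(\mathbb{Z})$, and the norm $F=\prod_j f\mid_k\gamma_j$ is the usual device for descending to a finite-index subgroup. Your identification of the delicate points is accurate: the matching of fan width with ramification index at the cusps, the bookkeeping at the elliptic points, and the factor-of-two issue in passing from $\Gamma$ to $\widehat{\Gamma}$ when $-I\notin\Gamma$. These are genuine details that need care but present no obstruction; Rankin's treatment handles them, and your outline would as well once fleshed out.
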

\begin{remark}
For $\zeta\in\mathfrak{h}$,
$\ORD(f;\zeta;\Gamma)$ is defined in terms of 
 the invariant order $\ord(f;\zeta)$, which  is interpreted
in the usual sense. See \cite[p.91]{Ra} for details of this and the 
notation used.
\end{remark}

\subsubsection*{$p=5$}

\begin{center}
\begin{tabular}{c c c c}
cusp $\zeta$ & $n(\Gamma_1(5);\zeta)$ & $\ord(\DK{5}(z);\zeta)$ 
& $\ORD(\DK{5}(z),\Gamma_1(5),\zeta)$ \\
$i\infty$    & $1$ & $\ge 1$ & $\ge 1$ \\
$0$          & $5$ & $\ge 0$ & $\ge 0$ \\
$\frac{1}{2}$ & $5$ & $\ge 0$ & $\ge 0$ \\
$\frac{2}{5}$ & $1$ & $\ge 1$ & $\ge 1$ 
\end{tabular}
\end{center}

Hence $\ORD(\DK{5}(z);\Gamma_1(5))\ge 2$.
But $\mu k = \frac{12}{12} = 1$. The Valence Formula implies that
$\DK{5}(z)$ is identically zero which proves Dyson's conjecture for $p=5$.

\subsubsection*{$p=7$}

\begin{center}
\begin{tabular}{c c c c}
cusp $\zeta$ & $n(\Gamma_1(7);\zeta)$ & $\ord(\DK{7}(z);\zeta)$ 
& $\ORD(\DK{7}(z),\Gamma_1(7),\zeta)$ \\
$i\infty$    & $1$ & $\ge 1$ & $\ge 1$ \\
$0$          & $7$ & $\ge 0$ & $\ge 0$ \\
$\frac{1}{2}$ & $7$ & $\ge 0$ & $\ge 0$ \\
$\frac{1}{3}$ & $7$ & $\ge 0$ & $\ge 0$ \\
$\frac{2}{7}$ & $1$ & $\ge 1$ & $\ge 1$ \\
$\frac{3}{7}$ & $1$ & $\ge 1$ & $\ge 1$    
\end{tabular}
\end{center}

Hence $\ORD(\DK{7}(z);\Gamma_1(7))\ge 3$.
But $\mu k = \frac{24}{12} = 2$. The Valence Formula implies that
$\DK{7}(z)$ is identically zero which proves Dyson's conjecture for $p=7$.

\subsection{The rank mod $11$}                     
\mylabel{subsec:rank11}

Atkin and Hussain \cite{At-Hu} studied the rank mod $11$.
In this section we find an identity for $\DK{11}(z)$.
For $1\le N\nmid k$  and following Robins \cite{Ro94} we define the generalized eta-function
\begin{equation}
\mylabel{eq:Geta}
\eta_{N,k}(\tau) =q^{\frac{N}{2} P_2(k/N) }
\prod_{
       \substack{m>0 \\ m\equiv \pm k\pmod{N}}} (1 - q^m),
\end{equation}
where $\tau \in \mathfrak{h}$, $P_2(t) = \{t\}^2 - \{t\} + \tfrac16$ 
is the second periodic Bernoulli polynomial, 
and $\{t\}=t-[t]$ is the fractional part of $t$.


We will prove that
\beq
(q^{11};q^{11})_\infty \,
\sum_{n=1}^\infty \Lpar{\sum_{k=0}^{10} N(k,11,11n-5)\,\zeta_{11}^k}q^n
=
\sum_{k=1}^5 c_{11,k}\, j_{11,k}(\tau),
\mylabel{eq:rank11id}
\eeq
where
$$
j_{11,k}(\tau) = \frac{\eta(11\tau)^4}{\eta(\tau)^2} \, 
\frac{1}{\eta_{11,4k}(\tau)\,\eta_{11,5k}(\tau)^2},
$$
and
\begin{align*}
c_{11,1} &= 2\,{\zeta_{11}}^{9}+2\,{\zeta_{11}}^{8}+{\zeta_{11}}^{7}+{\zeta_{11}}^{4}+2\,{\zeta_{11}}^{3}+2
\,{\zeta_{11}}^{2}+1,\\
c_{11,2} & =-({\zeta_{11}}^{9}+{\zeta_{11}}^{8}+2\,{\zeta_{11}}^{7}+{\zeta_{11}}^{6}+{\zeta_{11}}^{5}+2\,{
\zeta_{11}}^{4}+{\zeta_{11}}^{3}+{\zeta_{11}}^{2}+1),\\
c_{11,3} &= 2\,{\zeta_{11}}^{8}+2\,{\zeta_{11}}^{7}+2\,{\zeta_{11}}^{4}+2\,{\zeta_{11}}^{3}+3,\\
c_{11,4} &= 4\,{\zeta_{11}}^{9}+{\zeta_{11}}^{8}+2\,{\zeta_{11}}^{7}+2\,{\zeta_{11}}^{6}
+2\,{\zeta_{11}}^{5}+2\,{\zeta_{11}}^{4}+{\zeta_{11}}^{3}+4\,{\zeta_{11}}^{2}+4,\\
c_{11,5} &=-({\zeta_{11}}^{9}+2\,{\zeta_{11}}^{8}-{\zeta_{11}}^{7}+2\,{\zeta_{11}}^{6}+2\,{\zeta_{11}}^{5}-
{\zeta_{11}}^{4}+2\,{\zeta_{11}}^{3}+{\zeta_{11}}^{2}+3).
\end{align*}

By Theorem \thm{Kpthm} we know that the left side of \eqn{rank11id} is  a weakly
holomorphic modular form of weight $1$ on $\Gamma_1(11)$. We prove \eqn{rank11id}
by showing that the right side is also a 
weakly
holomorphic modular form of weight $1$ on $\Gamma_1(11)$ and using the Valence Formula
\eqn{valform}.  Following Biagioli \cite{Bi89} we define
\beq
f_{N,\rho}(\tau) := q^{(N-2\rho)^2/(8N)}\,(q^\rho,q^{N-\rho},q^N;q^N)_\infty.
\mylabel{eq:fdef}
\eeq
Then
$$
f_{N,\rho}(\tau) = f_{N,N+\rho}(\tau) = f_{N,-\rho}(\tau),
$$
and
$$
f_{N,\rho}(\tau) = \eta(N\tau)\,\eta_{N,\rho}(\tau).
$$
We observe that
$$
j_{11,k}(\tau) = \frac{\eta(11\tau)^7}{\eta(\tau)^2} \, 
\frac{1}{f_{11,4k}(\tau)\,f_{11,5k}(\tau)^2},
$$
for $1 \le k \le 5$.
\begin{theorem}[Biagioli \cite{Bi89}]
Let $A=\AMat\in\Gamma_0(N)$. Then
$$
\stroke{f_{N,\rho}(\tau)}{A}{\tfrac{1}{2}} = (-1)^{\rho b + \FL{\rho a/N} + \FL{\rho/N}}\, 
\exp\Lpar{\frac{\pi iab}{N}\rho^2}
\,\nu_{\theta_1}\Lpar{{}^NA}\,
f_{N,\rho a}(\tau),
$$ 
where
$$
{}^NA = \NAMat \in \SL_2(\mathbb{Z}),
$$
and
$\nu_{\theta_1} = \nu_\eta^3$ is the theta-multiplier.
\end{theorem}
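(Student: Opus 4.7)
The plan is to derive this transformation law by first identifying $f_{N,\rho}(\tau)$ (up to an elementary prefactor) with the classical Jacobi theta function $\theta_1$ evaluated at a torsion point, and then reducing the $\Gamma_0(N)$-action on $\tau$ to the standard $\SL_2(\mathbb{Z})$-action on $\theta_1$ via the conjugation matrix ${}^N A = \NAMat$.

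First I would use the Jacobi triple product to express
$$
f_{N,\rho}(\tau) = \epsilon_\rho\, q^{\rho^2/(2N)}\,\theta_1(\rho\tau;\, N\tau),
$$
for an explicit eighth root of unity $\epsilon_\rho$, where the extra power $q^{\rho^2/(2N)}$ accounts for the gap between the theta exponent $N/8 - \rho/2$ (visible from the triple product) and the exponent $(N-2\rho)^2/(8N)$ used in the definition of $f_{N,\rho}$. This step is purely algebraic manipulation of infinite products.

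Next I would exploit the key conjugation identity: if $A = \AMat \in \Gamma_0(N)$ and $\tau' = A\tau$, then
$$
N\tau' = \frac{a(N\tau)+bN}{(c/N)(N\tau)+d} = ({}^N A)\cdot (N\tau),
$$
while $\rho\tau' = (\rho\tau)/(c\tau+d)$. Consequently, the slash action $\stroke{f_{N,\rho}(\tau)}{A}{\tfrac{1}{2}}$ translates directly into the slash action on $\theta_1(z;\tau)$ at the specialization $(z,\tau) = (\rho\tau, N\tau)$ by the matrix ${}^N A \in \SL_2(\mathbb{Z})$. Applying the classical theta transformation
$$
\theta_1\!\left(\frac{z}{c\tau+d};\,\frac{a\tau+b}{c\tau+d}\right) = \nu_{\theta_1}(\gamma)\,(c\tau+d)^{1/2}\,\exp\!\left(\frac{\pi i c z^2}{c\tau+d}\right)\theta_1(z;\tau),
$$
valid for $\gamma \in \SL_2(\mathbb{Z})$, with $\gamma = {}^N A$ and $z = \rho\tau$, yields the exponential contribution $\exp(\pi i (c/N)\rho^2 \tau^2/(c\tau+d))$. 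Combined with the transformation of the $q^{\rho^2/(2N)}$ prefactor under $\tau \mapsto A\tau$, elementary manipulation of $(c\tau+d)$ collapses this to precisely $\exp(\pi i ab\rho^2/N)$.

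The resulting theta value on the right-hand side has argument $\rho a\tau$ rather than $\rho\tau$; I would then use the quasi-periodicity relations $\theta_1(z+1;\tau) = -\theta_1(z;\tau)$, $\theta_1(z+\tau;\tau) = -q^{-1/2}e^{-2\pi iz}\theta_1(z;\tau)$, and $\theta_1(-z;\tau) = -\theta_1(z;\tau)$ to reduce $\rho a$ back to its canonical residue modulo $N$. The sign $(-1)^{\FL{\rho a/N}}$ counts lattice-period crossings during this reduction, $(-1)^{\FL{\rho/N}}$ is the analogous normalization absorbed into $\epsilon_\rho$, and the $(-1)^{\rho b}$ factor records the translational part of the action. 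The main obstacle will be the bookkeeping of eighth roots of unity: matching the $\theta_1$-multiplier $\nu_{\theta_1} = \nu_\eta^3$ produced by the transformation against the quadratic exponential phase, and verifying that the sign reduction of $\rho a \bmod N$ outputs exactly the stated combination of floor terms.
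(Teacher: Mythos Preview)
The paper does not supply its own proof of this theorem; it is stated with attribution to Biagioli \cite{Bi89} and used as a black box in the rank mod~$11$ computation. Your proposed argument---rewrite $f_{N,\rho}$ via the Jacobi triple product as $\theta_1(\rho\tau;N\tau)$ times an elementary factor, conjugate the $\Gamma_0(N)$-action to the full modular action on $\theta_1$ through the substitution $N\tau \mapsto ({}^N A)(N\tau)$, apply the classical $\theta_1$ transformation law, and then reduce $\rho a$ modulo $N$ using quasi-periodicity---is exactly the route taken in Biagioli's original paper, so your sketch is on target and matches the cited source.
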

Biagioli has also calculated the orders of these functions at the cusps.
\begin{prop}
\mylabel{propo:ford}
Let $1\le N\nmid \rho$, and $(a,c)=1$. Then
$$
\ord\Lpar{f_{N,\rho}(\tau),\frac{a}{c}} = 
\frac{g}{2N}\Lpar{\frac{a\rho}{g} - \FL{\frac{a\rho}{g}} - \frac{1}{2}}^2,
$$
where $g=(N,c)$.
\end{prop}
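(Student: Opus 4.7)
The plan is to compute the leading exponent of the Fourier expansion of $\stroke{f_{N,\rho}}{A}{1/2}(\tau)$ as $\tau\to i\infty$ for $A\in\SL_2(\mathbb Z)$ satisfying $A\infty=a/c$, by representing $f_{N,\rho}$ as a classical theta constant via the Jacobi triple product and then applying the modular transformation of theta functions.

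First, the Jacobi triple product identity rewrites
$$f_{N,\rho}(\tau)=\sum_{n\in\mathbb Z}(-1)^n\exp\!\left(\frac{\pi i\tau}{4N}\bigl(N(2n-1)+2\rho\bigr)^2\right),$$
identifying $f_{N,\rho}$ (up to a root of unity) with the theta constant of rational characteristic $[\rho/N-1/2,\,1/2]$ evaluated at $N\tau$.

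Second, I carry out a matrix decomposition to extend Biagioli's transformation (which applies on $\Gamma_0(N)$) to a general $A\in\SL_2(\mathbb Z)$. Setting $g=\gcd(N,c)$, $N_1=N/g$, $c_1=c/g$, the assumptions $\gcd(a,c)=1$ and $\gcd(c_1,N_1)=1$ force $\gcd(N_1 a,c_1)=1$, so by Bezout one can pick $\alpha,\beta\in\mathbb Z$ with $\alpha N_1 a+\beta c_1=1$. Defining $A'=\begin{pmatrix} N_1 a & -\beta \\ c_1 & \alpha \end{pmatrix}\in\SL_2(\mathbb Z)$ and $e=\alpha Nb+\beta d$, one verifies directly that
$$\begin{pmatrix} N & 0 \\ 0 & 1 \end{pmatrix}\AMat = A'\begin{pmatrix} g & e \\ 0 & N_1 \end{pmatrix},\qquad\text{hence}\qquad N\cdot A\tau=A'\!\left(\frac{g\tau+e}{N_1}\right).$$

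Third, I would apply the classical $\SL_2(\mathbb Z)$-transformation of theta constants under $A'$. This acts linearly on the characteristic $[\rho/N-1/2,\,1/2]$ up to a half-integer shift and yields an automorphy factor together with an eighth-root-of-unity multiplier, so that
$$f_{N,\rho}(A\tau)=\kappa(A,\rho,N)\,(c\tau+d)^{1/2}\,\theta_{\alpha^*,\,1/2}\!\left(\frac{g\tau+e}{N_1}\right),$$
where the transformed first-characteristic $\alpha^*$ satisfies the key congruence $\alpha^*\equiv a\rho/g\pmod 1$; this reduction follows from $N_1 a\cdot(\rho/N)=a\rho/g$ together with the Bezout relation cancelling the remaining terms modulo $1$.

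Finally, one reads off the leading exponent of $q=e^{2\pi i\tau}$ from $\theta_{\alpha^*,\,1/2}\bigl((g\tau+e)/N_1\bigr)$. The $n$-th term contributes an exponent proportional to $(n+\alpha^*)^2$ in the local parameter, and minimisation over $n\in\mathbb Z$ (the minimum being attained at the integer closest to $-\alpha^*+1/2$, due to the second characteristic $1/2$) produces the half-integer-type expression $\bigl(\{a\rho/g\}-1/2\bigr)^2$. Combining with the scale factor from the change of variable $(g\tau+e)/N_1$ and tidying via $N_1=N/g$ yields the stated order $\frac{g}{2N}\bigl(\{a\rho/g\}-1/2\bigr)^2$.

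The main technical obstacle is the careful bookkeeping of the half-integer characteristic shifts and eighth-root-of-unity multipliers in the theta-transformation formula under a general element of $\SL_2(\mathbb Z)$; once the arithmetic reduction $\alpha^*\equiv a\rho/g\pmod 1$ is verified, the order formula follows from the minimisation argument.
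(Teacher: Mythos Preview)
The paper does not give its own proof of this proposition; it is quoted from Biagioli \cite{Bi89} (see the sentence immediately preceding the statement). So there is no in-paper argument to compare against. Your overall strategy --- write $f_{N,\rho}$ as a theta constant with rational characteristics via the Jacobi triple product, factor $\left(\begin{smallmatrix}N&0\\0&1\end{smallmatrix}\right)A = A'\left(\begin{smallmatrix}g&e\\0&N_1\end{smallmatrix}\right)$ with $A'\in\SL_2(\mathbb Z)$, apply the classical theta transformation under $A'$, and read off the leading exponent --- is exactly the standard route and is in essence how Biagioli proceeds.

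Two details in your sketch are off, and they affect the final constant. First, the transformed upper characteristic is not $\alpha^{*}\equiv a\rho/g\pmod 1$ but $\alpha^{*}\equiv a\rho/g-\tfrac12\pmod 1$: the original characteristic is $\rho/N-\tfrac12$, not $\rho/N$, and the $-\tfrac12$ is not absorbed by the half-integer corrections in the transformation law. Second, the second characteristic $\tfrac12$ controls only the phases $e^{2\pi i\beta^{*}(n+\alpha^{*})}$; it does not shift the minimiser of $(n+\alpha^{*})^{2}$, which is simply the integer nearest $-\alpha^{*}$. With the corrected $\alpha^{*}$ one gets the minimum $(\{a\rho/g\}-\tfrac12)^{2}$ directly. (One should also check that when $\{a\rho/g\}=0$, so two terms tie for the minimum, they do not cancel; this needs a short parity check on the transformed second characteristic.)

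Finally, the rescaling $\tau\mapsto(g\tau+e)/N_1$ contributes $g/N_1=g^{2}/N$ to the $q$-exponent, so your computation actually produces
\[
\ord\Bigl(f_{N,\rho};\frac{a}{c}\Bigr)=\frac{g^{2}}{2N}\Bigl(\{a\rho/g\}-\tfrac12\Bigr)^{2},
\]
with $g^{2}$ rather than $g$. This is what matches the numerical table for $\ord(j_{11,k};\zeta)$ in Section~\ref{subsec:rank11} (for example, at the cusp $2/11$ one needs $g^{2}/(2N)=11/2$ to recover $\ord(j_{11,1};2/11)=1$), and it agrees with Biagioli's original statement. The displayed $g/(2N)$ in the proposition appears to be a typographical slip.
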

We need Knopp's \cite[Theorem 2, p.51]{Kn-book} formula for the eta-multiplier
given in
\begin{theorem}
\mylabel{thm:Knetamult}
For $A=\AMat\in\SL_2(\mathbb{Z})$ we have
$$
\stroke{\eta(\tau)}{A}{\tfrac{1}{2}} = \nu_\eta(A)\,\eta(\tau),
$$
where
$$
\nu_\eta(A) = 
\begin{cases}
\jacsa{d}{c}\,\exp\Lpar{\frac{\pi i}{12}((a+d)c - bd(c^2-1)-3c} & \mbox{if $c$ is odd},\\
\jacsb{c}{d}\,\exp\Lpar{\frac{\pi i}{12}((a+d)c - bd(c^2-1)+3d-3-3cd} & \mbox{if $d$ is odd},
\end{cases}
$$
$$
\jacsa{d}{c} = \leg{c}{\abs{d}},\qquad
\jacsb{c}{d} = \leg{c}{\abs{d}}\,(-1)^{(\sgn c - 1)(\sgn d -1)/4},
$$
and $\leg{\cdot}{\cdot}$ is the Jacobi symbol.
\end{theorem}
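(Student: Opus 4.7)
The plan is to derive this explicit formula by starting from Dedekind's classical transformation of $\eta$ and then converting the Dedekind sum that appears into a Jacobi symbol via reciprocity. Specifically, for $A = \AMat \in \SL_2(\mathbb{Z})$ with $c > 0$, the classical identity reads
$$
\eta(A\tau) = \varepsilon(A)\,\sqrt{-i(c\tau+d)}\,\eta(\tau),\qquad
\varepsilon(A) = \exp\!\left(\pi i\!\left(\tfrac{a+d}{12c} - s(d,c) - \tfrac{1}{4}\right)\right),
$$
where $s(d,c) = \sum_{k=1}^{c-1}\frac{k}{c}\!\left(\frac{dk}{c} - \lfloor\frac{dk}{c}\rfloor - \frac{1}{2}\right)$ is the Dedekind sum. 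The entire content of the theorem is thus the computation of $\varepsilon(A)$ modulo the square root of unity coming from $\sqrt{-i(c\tau+d)}$, reorganized into a closed form involving the Jacobi symbol.

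First I would reduce to the case $c > 0$ by handling the signs, noting that replacing $A$ by $-A$ multiplies $\nu_\eta$ by $e^{-\pi i/2}\cdot(\text{choice of branch})$, consistent with the $(\sgn c-1)(\sgn d-1)/4$ correction in the $d$-odd version of the formula. Next, to convert $s(d,c)$ into a Jacobi symbol, I would apply Rademacher's refinement of the Dedekind reciprocity law: iterated use of the reciprocity
$$
s(d,c) + s(c,d) = -\tfrac{1}{4} + \tfrac{1}{12}\!\left(\tfrac{c}{d} + \tfrac{d}{c} + \tfrac{1}{cd}\right)
$$
mimics the Euclidean algorithm on $(c,d)$, and at each stage the accumulated exponential of $\pi i\, s(\cdot,\cdot)$ collects quadratic-reciprocity-type signs. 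The net effect is that $\exp(-\pi i s(d,c))$ telescopes into $\leg{c}{|d|}$ (in the $d$-odd case) or $\leg{d}{|c|}$ (in the $c$-odd case), with the remaining explicit factor $\exp\!\left(\tfrac{\pi i}{12}\bigl((a+d)c - bd(c^2-1) \pm 3(\cdots)\bigr)\right)$ absorbing the non-Dedekind-sum contributions.

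A cleaner alternative, which I would pursue in parallel as a check, is to go through the Jacobi theta function $\theta_1(\tau)$. One has $\eta(\tau)^3 = \tfrac{1}{2\pi}\theta_1'(0\mid\tau)$ (so $\nu_\eta^3 = \nu_{\theta_1}$), and $\theta_1$ transforms under $\SL_2(\mathbb{Z})$ with a multiplier that is classically known to be expressible via Gauss sums and hence Jacobi symbols, with the $c$-odd vs.~$d$-odd split appearing naturally because Gauss sums evaluate differently in the two parity cases. Taking cube roots (and tracking the ambiguity carefully using the fact that the cocycle of $\nu_\eta$ is already a $24$-th root of unity) recovers the formula.

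The main obstacle is purely bookkeeping: ensuring that the exponent $\tfrac{\pi i}{12}\bigl((a+d)c - bd(c^2-1)\bigr)$ together with the correction $-3c$ (for $c$ odd) or $+3d - 3 - 3cd$ (for $d$ odd) reproduces exactly the sum of Dedekind's $\tfrac{\pi i (a+d)}{12 c} - \tfrac{\pi i}{4}$ with the non-Jacobi residue of $-\pi i\,s(d,c)$, and handling the $\sgn$ corrections when passing between $\jacsa{d}{c}$ and $\jacsb{c}{d}$. Once the formula is verified on the generators $T = \TMat$ (trivial) and $S = \SMat$ (where one computes $\leg{0}{1}\,\exp(\tfrac{\pi i}{12}(0 - 0 + 3 - 3 - 0)) \cdot (\text{branch})$ and matches against $\exp(-\pi i/4)$), the multiplicativity of $\nu_\eta$ as a cocycle extends it to all of $\SL_2(\mathbb{Z})$, subject only to confirming that the right-hand side of the proposed formula is itself consistent under the same cocycle relation; this last consistency check is the most error-prone step.
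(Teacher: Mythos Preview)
The paper does not prove this theorem: it is quoted verbatim as Knopp's formula \cite[Theorem~2, p.~51]{Kn-book} and used as a black box in the computations that follow. So there is no ``paper's own proof'' to compare against.

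Your sketch is nonetheless the right outline for how this result is established in the literature. Starting from Dedekind's functional equation with the Dedekind sum $s(d,c)$ and then unwinding $s(d,c)$ into a Jacobi symbol via iterated reciprocity is precisely Rademacher's method, and it is essentially what Knopp records. Your alternative through $\theta_1$ and $\nu_\eta^3 = \nu_{\theta_1}$ is also standard and appears, for instance, in the theta-function literature. Both routes work; the Dedekind-sum route is more direct for the eta-function itself, while the theta route makes the Jacobi symbol appear more transparently through Gauss sums but then requires extracting a cube root of a $24$th root of unity, which is the delicate step you flag.

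One caution: your final paragraph proposes verifying the formula on $S$ and $T$ and then extending by the cocycle relation. This is circular unless you have \emph{independently} shown that the right-hand side of the formula satisfies the cocycle identity $\nu_\eta(AB) = \nu_\eta(A)\,\nu_\eta(B)\,\sigma(A,B)$ with the correct branch factor $\sigma$; that verification is itself the heart of the matter and is not easier than the direct Dedekind-sum computation. The honest route is to carry out the Dedekind-sum-to-Jacobi-symbol reduction in full for general $A$, as Rademacher and Knopp do.
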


We now show that each $j_{11,k}(\tau)$ is a weakly holomorphic function of weight $1$
on $\Gamma_1(11)$. Suppose $A=\AMat\in\Gamma_1(11)$. Then
$$
\eta(11 A\tau) = \eta({}^{11}A(11\tau)) = \nu_\eta({}^{11}A) \,\sqrt{c\tau+d}\,\eta(11\tau),
$$
and
\begin{align*}
&\stroke{j_{11,k}}{A}{1} \\
&= \nu_\eta({}^{11}A)^7\, \nu_\eta(A)^{-2}\, \nu_{\theta_1}^{-3}({}^{11}A) \,
  (-1)^{\FL{4ka/11} + \FL{4k/11}} \,
  \exp\Lpar{\frac{-\pi iab}{11}(66k^2)}\\
&\qquad\qquad \times
\frac{\eta(11\tau)^7}{\eta(\tau)^2} \, 
\frac{1}{f_{11,4ka}(\tau)\,f_{11,5ka}(\tau)^2},\\
&= 
\nu_\eta({}^{11}A)^{-2}\, \nu_\eta(A)^{-2}\, j_{11,k},
\end{align*}
since $a\equiv1\pmod{11}$ and 
$$
\FL{4ka/11} \equiv \FL{4k/11}  \pmod{2}.
$$
So we must show that
$$
\nu_\eta({}^{11}A)^{2}\, \nu_\eta(A)^{2} = 1.
$$
\subsubsection*{Case 1} $c$ is odd. For $p>3$ prime we find 
$$
\nu_\eta({}^{p}A)^{2}\, \nu_\eta(A)^{2} 
= \exp\Lpar{\frac{\pi i}{6p}(p+1)(-3c-bdc^2 + bdp+ ca + cd)} = 1
$$
in this case since $p\mid c$ and $p\equiv 11\pmod{12}$.
\subsubsection*{Case 2} $d$ is odd. For $p>3$ prime we find 
$$
\nu_\eta({}^{p}A)^{2}\, \nu_\eta(A)^{2} 
= \exp\Lpar{\frac{\pi i}{6p}(p+1)(-acd^2 - ac + dbp -dc)} = 1
$$
in this case since $p\mid c$ and $p\equiv 11\pmod{12}$.
It follows that  each $j_{11,k}(\tau)$ is a weakly holomorphic function of weight $1$
on $\Gamma_1(11)$. 

Next we calculate orders at cusps. From Propositions \propo{etaord} and \propo{ford} we have
\begin{center}
\begin{tabular}{c c c c c c}
cusp  &            &            & $\ord(j_{11,k},\zeta)$ &            &            \\
$\zeta$ & $j_{11,1}$ & $j_{11,2}$ & $j_{11,3}$ & $j_{11,4}$ & $j_{11,5}$ \\
$i\infty$ & $3$          &  $1$         & $2$          & $2$          & $2$ \\
$1/m$ & $-1/11$          &  $-1/11$         & $-1/11$          & $-1/11$          & $-1/11$ \\
$2/11$ & $1$          &  $2$         & $2$          & $2$          & $3$ \\
$3/11$ & $2$          &  $2$         & $1$          & $3$          & $2$ \\
$4/11$ & $2$          &  $2$         & $3$          & $2$          & $1$ \\
$5/11$ & $2$          &  $3$         & $2$          & $1$          & $2$ 
\end{tabular}
\end{center}
where $2\le m\le 5$.

Now we calculate lower bounds of orders at cusps of both sides of equation \eqn{rank11id}.
\begin{center}
\begin{tabular}{c c c c c}
cusp $\zeta$ & $n(\Gamma_1(11);\zeta)$ & $\ord(LHS;\zeta)$ & $\ord(RHS;\zeta)$ 
& $\ORD(LHS-RHS;\zeta)$ \\
$i\infty$    & $1$ &         &        &                 \\
$0$          & $11$ &  $\ge-1/11$       &  $\ge-1/11$      &   $\ge -1$      \\
$1/m$          & $11$ &  $\ge0$       &  $\ge-1/11$      &   $\ge -1$      \\
$m/11$          & $1$ &  $\ge1$       &  $1$      &   $\ge 1$     
\end{tabular} 
\end{center} 
where $2\le m\le 5$. But $\mu k = 5$. The result follows from the Valence Formula \eqn{valform} 
provided we can show that $\ORD(LHS-RHS,i\infty)\ge 7$. This is easily 
carried out using MAPLE.  

\subsection{The rank mod $13$}                     
\mylabel{subsec:rank13} 
J.~N.~O'Brien \cite{OB} has studied the rank mod $13$.  Using the methods 
of the previous section we may obtain an identity for $\DK{13}(z)$.  We 
state the identity and omit the details.  
\beq
(q^{13};q^{13})_\infty \,
\sum_{n=1}^\infty \Lpar{\sum_{k=0}^{10} N(k,13,13n-7)\,\zeta_{13}^k}q^n
=
\sum_{k=1}^6 \Lpar{c_{13,k} + d_{13,k}\,13\,\frac{\eta(13\tau)^2}{\eta(\tau)^2}}\, j_{13,k}(\tau),
\mylabel{eq:rank13id}
\eeq
where
$$
j_{13,k}(\tau) = \frac{\eta(13\tau)^3}{\eta(\tau)} \, 
\frac{1}{\eta_{13,2k}(\tau)^2\,\eta_{13,3k}(\tau)\,\eta_{13,4k}(\tau)\,\eta_{13,5k}(\tau)\,
\eta_{13,6k}(\tau)^2},
$$
and
\begin{align*}
c_{13,1} &= 5 \zeta_{13}^{11}+\zeta_{13}^{10}+5 \zeta_{13}^9+2 \zeta_{13}^8+3 \zeta_{13}^7+3 \zeta_{13}^6+2 \zeta_{13}^5+5 \zeta_{13}^4+\zeta_{13}^3+5 \zeta_{13}^2+6,\\
c_{13,2} &= -\zeta_{13}^{11}+2 \zeta_{13}^9+2 \zeta_{13}^8-\zeta_{13}^7-\zeta_{13}^6+2 \zeta_{13}^5+2 \zeta_{13}^4-\zeta_{13}^2+3,\\
c_{13,3} &=  \zeta_{13}^{11}+2 \zeta_{13}^{10}+2 \zeta_{13}^9+2 \zeta_{13}^4+2 \zeta_{13}^3+\zeta_{13}^2-1,\\
c_{13,4} &= 3 \zeta_{13}^{11}+3 \zeta_{13}^{10}+5 \zeta_{13}^8+\zeta_{13}^7+\zeta_{13}^6+5 \zeta_{13}^5+3 \zeta_{13}^3+3 \zeta_{13}^2+5,\\
c_{13,5} &= \zeta_{13}^{11}-3 \zeta_{13}^{10}-\zeta_{13}^9+2 \zeta_{13}^8-2 \zeta_{13}^7-2 \zeta_{13}^6+2 \zeta_{13}^5-\zeta_{13}^4-3 \zeta_{13}^3+\zeta_{13}^2-2,\\
c_{13,6} &= -\zeta_{13}^{11}-\zeta_{13}^{10}-2 \zeta_{13}^9-\zeta_{13}^8-2 \zeta_{13}^7-2 \zeta_{13}^6-\zeta_{13}^5-2 \zeta_{13}^4-\zeta_{13}^3-\zeta_{13}^2-1,\\
d_{13,1} &=  2 \zeta_{13}^{11}+\zeta_{13}^9+\zeta_{13}^8+\zeta_{13}^7+\zeta_{13}^6+\zeta_{13}^5+\zeta_{13}^4+2 \zeta_{13}^2+2,\\
d_{13,2} &=  -\zeta_{13}^{11}-\zeta_{13}^{10}-\zeta_{13}^7-\zeta_{13}^6-\zeta_{13}^3-\zeta_{13}^2,\\
d_{13,3} &=  \zeta_{13}^{11}+\zeta_{13}^{10}+\zeta_{13}^9+\zeta_{13}^8+\zeta_{13}^5+\zeta_{13}^4+\zeta_{13}^3+\zeta_{13}^2+1,\\
d_{13,4} &= \zeta_{13}^{10}-\zeta_{13}^9+\zeta_{13}^8+\zeta_{13}^5-\zeta_{13}^4+\zeta_{13}^3+1,\\
d_{13,5} &=  -\zeta_{13}^{10}-\zeta_{13}^9-\zeta_{13}^7-\zeta_{13}^6-\zeta_{13}^4-\zeta_{13}^3-2,\\
d_{13,6} &=  -\zeta_{13}^8-\zeta_{13}^5.
\end{align*}


\section{Concluding Remarks}                            
\mylabel{sec:conclusion}

Hickerson and Mortenson \cite{Hi-Mo16} have an 
alternative approach to the work of Bringmann, Ono and Rhoades
\cite{Br-On10}, \cite{Br-On-Rh}
on Dyson's rank. Using \cite[Theorem 3.5 and 3.9]{Hi-Mo14} they show how 
to make the results of 
Bringmann, Ono and Rhoades more explicit. For integers $0 \le a < M$ they define
\beq
D(a,M) = D(a,M,q) := \sum_{n=0}^\infty
\Lpar{N(a,M,n) - \frac{p(n)}{M}}q^n.
\mylabel{eq:HMDdef}.
\eeq
Their main theorem decomposes \eqn{HMDdef} into modular and mock modular
components
\beq
D(a,M,q) = d(a,M,q) + T_{a,M}(q),
\mylabel{eq:Damq}
\eeq
where $d(a,M,q)$ the mock modular part is given explicitly in terms of
Appell-Lerch series, and $T_{a,M}(q)$ are certain linear combinations
of theta-quotients. Hickerson and Mortenson do not consider
modular transformation properties of the Dyson rank function.
Our approach is extend Bringmann and Ono's results. Hickerson and Mortenson
results depend on properties for Appell-Lerch series. It is not clear
that Hickerson and Mortenson results imply our Theorem \thm{mainJp0}.
It would also be interesting to extend our theorem to $p=2$, $3$ and prime
powers. When $M=p>3$ is prime it should be possible to use Theorem \thm{mainJp0}
to obtain a result like \eqn{Damq} except $T_{a,M}(q)$ would not be 
given explicitly but rather as an element of an explicit space of
modular forms.

Finally we note that Andersen \cite{An16} has applied the results of this paper
as well as results of Zwegers \cite{Zw-thesis} to give a new proof
of Ramanujan's mock-theta conjectures \cite{An-Ga89}.
As mentioned earlier, 
the first proof of the mock-theta conjectures was given by Hickerson \cite{Hi88a}.
Folsom \cite{Fo} showed how the mock-theta conjectures could be proved using
the theory of Maass forms. However this involved verifying an identity to over
$10^{13}$ coefficients which is clearly beyond the limits of computation.
Andersen's proof involves nonholomorphic vector-valued modular forms and does
not rely on any computational verification.


\subsection*{Acknowledgments}
I would like to thank Chris Jennings-Shaffer for his careful reading of this
paper and for his suggestions and corrections.
I would also like to thank Eric Mortenson reminding me about
\cite{Hi-Mo14} and \cite{Hi-Mo16}, and thank Nick Andersen for \cite{An16}.

\bibliographystyle{amsplain}

\end{document}